\documentclass[12pt]{amsart}

\numberwithin{equation}{section}

\usepackage{amsmath,amsthm,amsfonts,eucal,}
\usepackage{xypic}
\usepackage{graphicx}
\usepackage{amssymb}
\usepackage{mathtools}
\usepackage{url}
\usepackage{xcolor}

\def\cb{{\mathcal B}}

\def\cf{{\mathcal F}}

\def\ch{{\mathcal H}}

\def\ck{{\mathcal K}}

\def\cs{{\mathcal S}}
\def\ct{{\mathcal T}}

\def\ga{{\mathfrak A}} 
\def\gb{{\mathfrak B}}

\def\bc{{\mathbb C}}

\def\bj{{\mathbb J}}

\def\bm{{\mathbb M}}
\def\bn{{\mathbb N}}

\def\bp{{\mathbb P}}
\def\br{{\mathbb R}}
\def\bt{{\mathbb T}}

\def\bz{{\mathbb Z}}

\def\a{\alpha}

\def\g{\gamma}

\def\l{\lambda}

\def\s{\sigma}

\def\om{\omega} \def\Om{\Omega}

\newtheorem{thm}{Theorem}[section]
\newtheorem{example}[thm]{Example}
\newtheorem{lem}[thm]{Lemma}
\newtheorem{cor}[thm]{Corollary}
\newtheorem{prop}[thm]{Proposition}

\theoremstyle{definition}
\newtheorem{rem}[thm]{Remark}
\newtheorem{defin}[thm]{Definition}

\def\min{\mathop{\rm min}}

\def\aut{\mathop{\rm Aut}}

\begin{document}
\title[A Dynamical Approach to Non-commutative de Finetti Theory]
{A Dynamical Approach to Non-commutative de Finetti Theory}

\author{Simone Del Vecchio}
\address{Simone Del Vecchio\\
Dipartimento di Matematica\\
Universit\`{a} degli studi di Bari\\
Via E. Orabona, 4, 70125 Bari, Italy}
\email{\texttt{simone.delvecchio@uniba.it}}

\author{Stefano Rossi}
\address{Stefano Rossi\\
Dipartimento di Matematica\\
Universit\`{a} degli studi di Bari\\
Via E. Orabona, 4, 70125 Bari, Italy}
\email{\texttt{stefano.rossi@uniba.it}}

\begin{abstract}
We develop a dynamical framework for non-commutative de Finetti theory. 
We first establish the non-commutative Hewitt-Savage $0$-$1$ law for quantum stochastic processes. We identify the factorization condition of the distribution of a spreadable process
which characterizes tail-triviality, which is also characterized dynamically in terms of a certain attractivity property of the distribution. These three equivalent ergodic conditions identify a distinguished level in the non-commutative hierarchy of ergodic properties, which all collapse to ergodicity in the classical probability setting. \\
To pass from the ergodic to the general case, we construct the conditional expectation onto the tail algebra for a spreadable process, in the GNS representation of the distribution of the canonical bilateral extension of the process. In this representation we establish the non-commutative Olshen Theorem identifying the tail algebra with the stationary algebra, and with the exchangeable algebra when the process is exchangeable. The resulting conditional expectation in particular inherits the same type of factorization property as distributions satisfying the Hewitt-Savage $0$-$1$ law. This factorization strengthens conditional independence conditions arising in previous literature, while collapsing to the same notion in the classical case. \\
This dynamical viewpoint leads to the identification of the minimal distributional symmetry underlying non-commutative de Finetti Theory, which we call \emph{weak spreadability}, and which in the classical setting is equivalent to exchangeability. We prove that a stationary process is weakly spreadable if and only if its tail algebra admits a unique normal conditional expectation satisfying Hewitt-Savage type factorization, thereby establishing a general non-commutative de Finetti Theorem.

\vskip0.1cm\noindent \\
{\bf Mathematics Subject Classification}:  46L55, 46L53, 60G09,  60G10, 46L09.\\
{\bf Key words}: Distributional symmetries, Spreadable processes, non-commutative ergodic theory, de Finetti Theorem

\end{abstract}

\maketitle

\section{Introduction}
Two fundamental structural results in the classical theory of exchangeable stochastic processes are the theorems of De Finetti and Hewitt-Savage. De Finetti's theorem identifies exchangeable processes  with those that are conditionally independent and identically distributed with respect to the tail $\sigma$-algebra, or equivalently, with mixtures of independent and identically distributed processes. Complementing this result, the Hewitt-Savage $0$-$1$ law characterizes the ergodic \emph{i.i.d.} case through the triviality of the tail $\sigma$-algebra. Together with the Olshen Theorem, which identifies the tail $\sigma$-algebra with both the stationary and exchangeable $\sigma$-algebras, these results provide an accomplished description of exchangeable processes.

The extension of this picture to non-commutative probability is considerably more subtle. In contrast with the classical setting, where there is a unique notion of independence, several inequivalent notions of independence emerged in non-commutative probability following Voiculescu's introduction of free probability \cite{VDN92}. Among them, tensor, free, Boolean, monotone and anti-monotone constitute Muraki's five natural notions of independence \cite{Mur}. Other forms have also been introduced, see e.g. \cite{BLS,Vbifree}.

The existence of several notions of independence naturally raises the question of how de Finetti theory should be extended beyond the classical framework.
The first major breakthrough in this direction was achieved by K\"{o}stler and Speicher \cite{KS}, who introduced the notion of quantum exchangeability and proved a free de Finetti theorem. Quantum exchangeability replaces classical exchangeability by the invariance under the action of Wang's quantum permutation groups, which is a stronger symmetry, and characterizes free conditional i.i.d. with respect to the tail algebra. Their work established a new paradigm, in which a prescribed notion of conditional i.i.d. is characterized by a suitable quantum symmetry.

Analogous de Finetti theorems were established for the remaining Muraki's natural notions of independence by Liu in \cite{Liu,LiuBM,Liu19} and, more generally, for a variety of quantum symmetries associated with so-called easy quantum groups \cite{BCS, BCFMW, Liu19, Liu25}. 

A different perspective was developed by K\"{o}stler in \cite{K}. Motivated by the theorem of Ryll-Nardzewski, identifying spreadability with exchangeability in the classical setting, he proposed spreadability as the fundamental distributional symmetry in the non-commutative framework. It is worth mentioning that spreadability is also the distributional symmetry naturally shared by the various non-commutative frameworks described above. Rather than starting from a prescribed notion of independence, spreadability itself is shown to imply a weak form of conditional independence, which in K\"{o}stler's terminology goes under the name of full conditional independence with respect to the tail algebra. Since in the classical setting conditional i.i.d. and spreadability/exchangeability determine each other, it is natural to ask whether this remains true in the non-commutative framework. The answer is negative: as shown in  \cite{GK08, K}, stationarity together with full conditional independence do not suffice to reconstruct spreadability.

%

The present work addresses precisely this issue. Our guiding principle is that the appropriate notion of conditional independence associated with the fundamental distributional symmetry should first be identified at the ergodic level through a suitable non-commutative analogue of the Hewitt-Savage $0$-$1$ law. Accordingly, our first main result establishes such a theorem for spreadable processes, Theorem \ref{01lawspread}. We prove that tail triviality is equivalent to a three-block factorization property of the distribution, which we call the forward-decoupling property, and, equivalently, to a dynamical attractivity property. 

This characterization naturally motivates a hierarchy of ergodic properties. The classical theory admits several equivalent characterizations of ergodicity, whereas in the non-commutative setting these are known to be no longer equivalent. Our analysis organizes them into such a hierarchy, Theorem \ref{ergohierspread}, and identifies the precise level occupied by the non-commutative Hewitt-Savage law. In particular, the factorization corresponding to the Hewitt-Savage level is genuinely stronger than the natural non-commutative analogue of the classical product factorization of moments. Furthermore, we show that the faithfulness assumptions commonly adopted in the literature (e.g. in \cite{BCS, GK08, K,KS}) force all levels of this hierarchy to coincide, motivating an approach that does not rely on such assumptions.

The next step is to construct the canonical conditional expectation onto the tail algebra. Such a construction was obtained by K\"{o}stler under faithfulness assumptions \cite{K} and, by Liu for bilateral processes \cite{LiuBM}. On the other hand, it is known that, without faithfulness, the GNS representation of the original (unilateral) process need not admit a stationary normal conditional expectation onto the tail algebra \cite{DKW,LiuBM}. This leads naturally to the GNS representation of the canonical bilateral extension of the process, in which we construct the canonical tail conditional expectation, Theorem \ref{condexpgen}, following \cite{LiuBM}, and we establish a non-commutative Olshen theorem, Theorem \ref{HSonZ}, identifying the tail algebra with the stationary algebra and, in the exchangeable case, with the exchangeable algebra. Note that the latter result is known not to hold in general in the GNS representation of the original process without faithfulness assumptions  \cite{DKW}.
We then prove that the canonical tail conditional expectation of a spreadable process satisfies both the conditional forward-decoupling factorization property and K\"{o}stler's full conditional independence property, Theorem \ref{deFinetti}. Besides extending the de Finetti Theorem in \cite{K} beyond the faithful setting, Theorem \ref{deFinetti} shows that the factorization singled out by the Hewitt-Savage ergodic level naturally persists in the conditional setting. 
This raises the natural question of whether adding the conditional forward-decoupling property restores the converse direction of K\"{o}stler's de Finetti theorem, namely whether stationarity together with full conditional independence and the conditional forward-decoupling property suffices to recover spreadability. The answer is again negative. Indeed, Gohm and K\"{o}stler's original counterexample in \cite{GK08} can be seen to satisfy the conditional forward-decoupling property as well, showing that this additional factorization is still insufficient to characterize spreadability.
A closer inspection of the proofs however reveals that the full strength of spreadability is only required to derive K\"{o}stler's full conditional independence. The remaining structural results rely instead on a weaker remnant of spreadability, which still suffices to establish the non-commutative Hewitt-Savage $0$-$1$ law, the associated hierarchy of ergodic properties, the construction of the canonical tail conditional expectation together with the Olshen theorem, and the conditional forward-decoupling factorization. In particular, we mention that the latter implies K\"{o}stler's ordered conditional independence. This observation naturally leads to the notion of weak spreadability, introduced in Definition \ref{weakspread}.

We show that weak spreadability is the minimal distributional symmetry underlying the non-commutative de Finetti theory developed in this paper. Besides yielding all the structural results described above, it is exactly characterized by the following de Finetti theorem, Theorem \ref{deFinetticonv}: a process is weakly spreadable if and only if it is stationary and admits a conditional expectation onto the tail algebra satisfying the forward-decoupling property.

Finally, we show in Corollary \ref{deFinettiquot} that weak spreadability is a genuinely non-commutative phenomenon. In the commutative setting, weak spreadability coincides with spreadability/exchangeability, and more generally the same remains true for processes whose degree of non-commutativity is dialed down in a suitable sense. In particular, our de Finetti theorem, Theorem \ref{deFinetticonv}, recovers the classical de Finetti theorem exactly.

To complete the picture, the present work, together with the existing non-commutative de Finetti theorems in the literature yields a unified ergodic characterization of Muraki's five natural notions of independence for infinite i.i.d. sequences. Each of them is precisely obtained by combining the Hewitt-Savage ergodic condition with the stronger distributional symmetry appearing in the corresponding de Finetti theorem (or through a quotient factorization property in the tensor case).\\

We now move on to describe how the sections of the paper are organized.
In Section \ref{prel}, we establish the framework of $C^*$-quantum probability spaces as a setup for quantum stochastic processes, which are here assumed to be unital for simplicity, with the general case of possibly non-unital processes being deferred to Section \ref{nunital}.
We discuss the canonical bilateral  extension procedure which allows us to obtain canonically, by stationarity, a double infinite quantum sequence of random variables out of a process indexed by the naturals. \\
In Section \ref{HS}, we provide the non-commutative formulation of the Olshen theorem, Theorem \ref{HSonZ}, that for a spreadable process the stationary algebra and the tail algebra in the Hilbert space of the bilateral extension are the same, and the tail algebra of the process is isomorphic with the tail algebra in the larger Hilbert space via the natural restriction $*$-automorphism.
We also prove a Hilbert-space version of the Olshen theorem, Theorem \ref{HSspreadonL2}, which settles invariant vectors rather than invariant operators.\\
In Section \ref{conexsec}, we tackle the problem of establishing the existence of a unique normal, invariant conditional expectation onto the tail algebra in the GNS representation of the bilateral extension of the given process for a (weakly) spreadable quantum stochastic process, Theorem \ref{condexpgen}. Among other things, the existence of such a conditional expectation leads us to  a characterization of extremality (of the distribution of the process) in terms of pureness of the restriction of the distribution to the tail algebra, Corollary \ref{purerestriction},
completing the partial characterization in \cite[Proposition 5.4.3]{DKW}.\\
In Section \ref{01law}, we state and prove in Theorem \ref{01lawspread} the general non-commutative Hewitt-Savage $0$--$1$ law.
Theorems \ref{ergohierexchange} and \ref{ergohierspread} cover the hierarchy of ergodicities in the exchangeable and spreadable case, respectively. Proposition \ref{tensor} and Remark \ref{free} contain the discussion of how identically distributed tensor or free independent processes can be characterized in terms of the suitable invariance of the distribution combined with any of the three conditions in Theorem \ref{01lawspread}.\\
Section \ref{definettisec} is devoted to the complete development of weak spreadability and the general non-commutative de Finetti Theorem.\\ 
In Section \ref{nunital}, we complete our theory by dealing with possibly non-unital processes. The resulting analysis, which up to some techicalities is the same as the unital case, is summed up in Theorem \ref{mainnonunital}, where the four levels  of ergodicities
for non-unital processes are spelled out. By enlarging the class of processes dealt with, we can also take into account Boolean and monotone/antimonotone independence. In particular, in Examples \ref{boolean} and \ref{monotone} we  discuss how, in the assumptions used in the literature \cite{Liu,LiuBM}, identically distributed Boolean or monotone/antimonotone independent processes can be characterized in terms of the suitable invariance of the distribution combined with any of the three conditions in Theorem \ref{01lawspread}.\\
Section \ref{models} is devoted to concrete examples and counterexamples illustrating the sharpness of the results established throughout the paper. We investigate classes of quantum processes arising from quotients of free products, including the infinite non-commutative torus. In particular, these examples show that the natural non-commutative analogue of moment factorization does not suffice to guarantee the triviality of the tail algebra. We also obtain a detailed description of the tail algebra of the extreme spreadable states on the infinite non-commutative torus, viewed as the distributions of quantum processes satisfying the twisted tensor commutation relations of the non-commutative torus. Finally, we observe that the techniques developed extend naturally to other twisted tensor products, including parafermion algebras \cite{BJLW}.
\section{Preliminaries}\label{prel}

 \subsection{Quantum Stochastic Processes and Distributional Symmetries}

We start by establishing  the non-commutative framework we work in, which is the $C^*$-algebraic approach.
A $C^*$-probability space is a pair $(\gb, \om)$  of a unital $C^*$-algebra $\gb$ and a state $\om: \gb\rightarrow\bc$.
A (unital) quantum stochastic process with (unital) sample  $C^*$- algebra $\ga$ is   a quadruple $(\gb, \om, \ga, \{\iota_j\}_{ j\in\bn})$,
where $(\gb, \om)$ is a $C^*$- probability space and $\{\iota_j: j\in\bn\}$  is a family of
unital $^*$-homomorphisms $\iota_j:\ga\rightarrow\gb$, which are to be interpreted as the non-commutative
random variables. 
\begin{rem}
For the time being, we will be focused on the unital case for sake of simplicity. However, the more general case
of possibly non-unital sample algebras and/or non-unital $*$-homomorphisms will be handled in Section \ref{nunital}.
A few words on the approach we follow are now in order. First, the definition of non-commutative random variable  includes the classical case. Indeed, the situation of a real-valued random variable corresponds exactly to taking $\ga=C_0(\br)$
and $\iota(f)=f(A)$, $f$ in $C_0(\br)$, the continuous functional calculus of some possibly unbounded self-adjoint operator $A$ on $\ch_\om$ (the GNS space of $\pi_\om$, the GNS representation of $\om$). The case of a whole  sequence of random variables is obtained
by considering $\ga=C_0(\br)$ and $\iota_j(f)=f(A_j)$, $j$ in $\bn$, for a sequence of possibly unbounded operators commuting with one another (that is for all $j, k$ in $\bn$ one has $f(A_j)g(A_k)=g(A_k)f(A_j)$ for all $f, g$ in $C_0(\br)$).\\
In particular, the approach based on possibly non-unital $*$-homomorphisms allows one to consider unbounded random variables
while sticking to a $C^*$-algebra formalism.
\end{rem}

The distribution of a quantum stochastic process as above is the state $\varphi$ on the infinite unital free product (with amalgamation of the unit)
$\ast_\bn\ga$ given by
$$\varphi (i_{j_1}(a_1)i_{j_2}(a_2)\cdots i_{j_n}(a_n)):=\om(\iota_{j_1}(a_1)\iota_{j_2}(a_2)\cdots\iota_{j_n}(a_n))$$
for all $n\in\bn$, $j_1\neq j_2\neq \cdots \neq j_n\in\bn$, where $i_k:\ga\rightarrow\ast_\bn\ga$ is the $k$-th embedding of $\ga$ into its infinite free product $\ast_\bn\ga$.
That the above formula defines a state on $\ast_\bn\ga$ can be seen as follows. 
Denote by $\pi_\om$ the GNS representation of $\cb$ corresponding to the state $\om$. 
Consider the family
$\{\pi_\om\circ\iota_j\}_{j\in\bn}$ of $*$-homomorphisms of $\ga$. By the universal property of the free product there exists a unique representation $\pi: \ast_\bn\ga\rightarrow \cb(\ch_\om)$ such that
$\pi(i_{j_1}(a_1)i_{j_2}(a_2)\cdots i_{j_n}(a_n))=\pi_\om(\iota_{j_1}(a_1)\iota_{j_2}(a_2)\cdots\iota_{j_n}(a_n))$, which means
$\varphi$ is nothing but the vector state associated with $\xi_\om$ of the representation $\pi$.\\
Conversely, one can obtain a quantum stochastic process from any given  state $\varphi$ on $\ast_\bn\ga$ with $\varphi$ itself as its distribution.
This will have $\ga$ as its sample algebra, $\cb=\cb(\ch_\varphi)$, $\om$ the vector state corresponding to $\xi_\varphi$, and finally
$\iota_j= \pi_\varphi\circ i_j$ for every $j$ in $\bn$.\\
A quantum stochastic process is identically distributed if, for all $j, k$ in $\bn$, $\om\circ\iota_j=\om\circ\iota_k$ (or equivalently
$\varphi\circ i_j=\varphi\circ i_k$) as states on the sample algebra $\ga$.\\
The tail algebra of a quantum stochastic process will play a key role throughout the paper.
In complete analogy with the classical case, it can be defined as the von Neumann algebra acting
on $\ch_\varphi$, the GNS space of $\pi_\varphi$, given by
\begin{equation}\label{tail}
\mathcal{T}_\varphi:=\bigcap_{n\in\bn}\{\pi_\varphi(i_k(\ga)): k\geq n\}''=\bigcap_{n\in\bn}\{\pi_\om(\iota_k(\ga)): k\geq n\}''\, .
\end{equation}
for a quantum stochastic process with distribution $\varphi$.\\
If we start directly with a state $\varphi$ on $\ast_\bn\ga$, we can still define its tail algebra $\mathcal{T}_\varphi$ by the first equality above, or equivalently as the tail algebra of the process induced by $\varphi$.

We now move on to recall the main distributional symmetries coming from the classical setting.  To do so, we need to
remind the reader what the natural actions of permutations/monotone maps by indices on the free infinite product look like.\\
Denote by $\bp_\bn$ the group of bijective maps of $\bn$
acting non-trivially 
only on  finitely many naturals. The
group operation is given by the composition.\\
The universal property of the free product C$^*$-algebra ensures 
 that any function
$f: \bn\to\bn$ 
can be lifted to a $*$-endomorphism 
$\Phi_f: *_{\bn}\ga\to *_{\bn}\ga$,
 uniquely determined by 
$\Phi_f(i_j(a))=i_{f(j)}(a)$ for all $j\in\bn$, $a\in \ga$. 
In particular, the group $\bp_\bn$ has a natural action
$\alpha$ of $\bp_\bn$ on $*_\bn\ga$ as a group homomorphism $\alpha : \bp_\bn \to \aut(*_\bn\ga)$.\\
Any state $\varphi$ in $\cs(*_\bn\ga)$
invariant under the action of $\alpha$, i.e. such that 
$\varphi\circ\alpha_\sigma=\varphi$ for all $\sigma\in\bp_\bn$, is called exchangeable, and so is the quantum stochastic process
with  distribution $\varphi$.\\
Stationary states are those invariant under the shift, 
namely the endomorphism $\tau: *_\bn\ga\to *_\bn\ga$,
which is defined as $\tau(i_j(a)):=i_{j+1}(a)$. Accordingly, a process whose distribution is stationary will be called stationary as well.\\
Now consider the semigroup of all strictly increasing maps $h: \bn\to\bn$.
Each of these maps induces an endomorphism of  $*_\bn\ga$.
Any state $\cs(*_\bn\ga)$
invariant under all these endomorphisms is called spreadable.
Basically, a process is spreadable if and only if all of its subsequences have the same joint distribution as the whole process.
For an analysis of the relations between spreadability and exchangeability in the non-commuttaive setting the reader is referred to
\cite{ADR}. Here we limit ourselves to recalling that spreadability is a weaker symmetry than exchangeability.
Instead of working with the semigroup of all strictly increasing monotone maps, we may as well work with the smaller
semigroup $\bj_\bn$ of those stricly increasing maps $h$ with cofinite range, $|\bn\setminus h(\bn)|<\infty$. Indeed, verifying spreadability amounts to checking invariance at the level of the dense $*$-algebra generated by finite words in $\ast_\bn\ga$, and on any finite set of $\bn$ the action of a stricly increasing map $h$ coincides with that of some $\widetilde{h}$ in $\bj_\bn$.\\
Recall that $\bj_\bn$ is generated as a semigroup by the partial shifts $\theta_h$, $h\in \bn$, 
\[
\theta_h(k) := 
\begin{cases} 
k & \text{if } k < h, \\ 
k + 1 & \text{if } k \geq 0\, ,
\end{cases}
\]
see \cite{K}.
In order to refer to the various sets of invariant states, we set up the notation
$\cs^{\bp_\bn}(\ast_\bn\ga)$, $\cs^{\bj_\bn}(\ast_\bn\ga)$, and $\cs^\tau(\ast_\bn\ga)$ to denote the sets of exchangeable,
spreadable, and stationary states, respectively.\\

In many  examples the variables of the quantum process may satisfy  relations, with the classical case corresponding to
(a commutative sample algebra and) commuting variables. For this reason, it is useful to establish terminology tailored to the
situations where the distribution of the process actually comes from a suitable quotient of the infinite free product, as is done in the following definition.

\begin{defin}\label{distri}
We say that the distribution $\varphi$ of a quantum stochastic process with sample algebra $\ga$
 factorizes through the quotient $\ast_\bn\ga/I$, for some two-sided ideal $I$,  if
$\varphi=\widetilde{\varphi}\circ p_I$ for some state $\widetilde{\varphi}$ on $\ast_\bn\ga/I$, with
$p_I: \ast_\bn\ga\rightarrow \ast_\bn\ga/ I$ being the canonical
projection. 
\end{defin}

\begin{rem}\label{censupport}
 Many results in the $W^*$-framework  used in the literatrure are stated under the assumption that the distribution is a faithful state on the von Neumann algebra generated by the process (see e.g. \cite{BCS,GK08,K,KS}).
 The corresponding hypothesis in the $C^*$-setting is that the GNS vector $\xi_\varphi$ be separating for the von Neumann algebra generated by the process, $\mathcal{R}_\varphi:=\pi_\varphi(\ast_\bn\ga)''$.\\
 It is worth recalling that, in general, in order for the GNS vector $\xi_\varphi$ to be separating for the von Neumann algebra generated by the GNS representation of the state $\varphi$ on a $C^*$-algebra
$\gb$, it is necessary and sufficient that the support of $\varphi$ in the bidual $\gb^{**}$ lies in the center of $\gb^{**}$, see e.g. \cite[p. 15]{NSZ}. Since the latter terminology is more concise, even though we will only make use of the former characterization, we shall simply say that the state has central support when referring to this property. Clearly, all of the states of a commutative $C^*$-algebra have central 
support.
 \end{rem}


%
 We end this preliminary present section with some notation used consistently in the proofs of many results of the paper.\\
If $\Phi$ is a $*$-endomorphism of a $C^*$-algebra $\gb$ and $\varphi$ is a $\Phi$-invariant state, then it is possible to define an isometry
$V_\Phi$ on $\ch_\varphi$ as $V_\phi \pi_\varphi(b)\xi_\varphi:=\pi_\varphi(\Phi(b))\xi_\varphi$, for all $b$ in $\gb$.
We mention that $V_\Phi$ is also known as the implementing isometry of $\Phi$ on the GNS representation of $\varphi$, although
$V_\Phi\pi_\varphi(b)V_\Phi^*$ may in general differ from $\pi_\varphi(\Phi(b))$. However, if $\Phi$ is a $*$-automorphism, in which case $V_\Phi$ is a unitary, then the adjoint action of $V_\phi$ does implement the action of $\Phi$.\\

\subsection{Bilateral Extension  and Implementation of the Shift}

Because the shift $\tau$ acts on the infinite free product indexed by $\bn$, $\ast_\bn \ga$, as  a proper $*$-endomorphism
($\tau$ is injective but not surjective), it need not lift to a normal $*$-endomorphism of the von Neumann algebra generated by the GNS representation of any given shift-invariant state. By normal implementation of the shift in the GNS representation of a stationary state $\varphi$ we mean the normal $*$-endomorphism $\widetilde{\tau}$ of $\pi_{\varphi}(\ast_\bn\ga)''$ uniquely determined by $\widetilde{\tau}(\pi_\varphi(x))=\pi_\varphi(\tau(x))$, $x$ in $\ast_\bn\ga$. When such an implementation exists, we simply denote it by $\tau$ since it is uniquely determined by the shift at the $C^*$-algebra level.\\
As shown in \cite{ DKW, LiuBM}, examples can be given of exchangeable states in whose GNS representation the shift cannot implemented as a normal $*$-endomorphism. In such cases, in the GNS representation of the given state, as a drawback of the fact that the shift does not act as a normal $*$-endomorphism on the whole von Neumann algebra,
it is not possible
to identify  the tail algebra with the shift-invariant von Neumann subalgebra,   nor is it possible to exhibit a normal conditional expectation $E$ onto the tail algebra which is shift-invariant  (that is $E(\pi_\varphi(x))=E(\pi_\varphi(\tau(x)))$ for all $x$ in $\ast_\bn\ga$). For completeness' sake we recall here what is meant by normal conditional expectation in the non-commutative setting an discuss an example of a stationary state for which the shift admits no normal implementation.
\begin{defin}\label{conditional}
A normal conditional expectation $E: \mathcal{M}\rightarrow\mathcal{N}$ from a von Neumann algebra $\mathcal{M}$ onto a 
 von Neumann subalgebra $\mathcal{N}\subseteq\mathcal{M}$ is a normal, contractive, completely  positive projection onto
$\mathcal{N}$ ($E=E^2$ and $E(n)=n$ for all $n$ in $\mathcal{N}$)  such that $E(nmn')=nE(m)n'$ for all $m$ in $\mathcal{M}$ and $n, n'$ in $\mathcal{N}$.
\end{defin}

\begin{example}\label{noshift}
In \cite[ Example 5.2.1]{DKW}  the authors consider the sample algebra algebra
$\ga=\bc^3$ with generator $a=(1, -1, 0)$. Consider the Hilbert space
$\ell^2(\bn_0)$ with canonical orthonormal basis  $\{v_n:n\geq 0\}$.
For every $n\geq 1$, define the representations 
$\iota_n:\ga\rightarrow\cb(\ell_2)$ by
$$\iota_n(a)= e_{0,n}+e_{n, 0}\,,$$ 
where $e_{n, m}$ is the rank-one operator
$e_{n, m}(x) =\langle x,  v_m\rangle v_n$, $x$ in $\ell^2(\bn_0)$.
For every finite permutation $\s$ of $\bn$, consider the unitary $U_s$ acting on $\ell^2(\bn)$ as
$U_\s v_n=v_{\s(n)}$ $U_\s v_0=v_0$.\\
Let $\pi: \ast_\bn\ga\rightarrow\cb(\ell^2(\bn_0))$ be the corresponding representation, that is the representation of $\ast_\bn\ga$ uniquely determined by
$\pi(i_k(a))=	\iota_k(a)$, $k\in\bn$.  Consider the state $\varphi$ on $\ast_\bn\ga$ given by $\varphi(x)= \langle \pi(x)v_0, v_0 \rangle$, $x$ in $\ast_\bn\ga$. As observed in the mentioned paper, the GNS representation
of $\varphi$ is $\pi$ itself since $v_0$ is cyclic ($\pi$ is actually irreducible in that its range coincides with the unitalization of the algebra of compact operators on $\ell^2(\bn)$).  We next show that $\tau$ does not extend
to a normal $*$-endomorphism of $\cb(\ell^2(\bn))$.  To this end, we start by noting that $e_{0, 0}=\iota_n(a^2)\iota_{n+1}(a^2)$ and
$e_{n, n}=\iota_n(a^2)-\iota_n(a^2)\iota_{n+1}(a^2)$, for all $n\geq 1$.
Therefore, if the shift could be extended,  we would 
  have $\tau(e_{0, 0})=e_{0, 0}$ and $\tau(e_{n, n})=e_{n+1, n+1}$ for all $n\geq 1$.
However, such a $\tau$ cannot be normal. Indeed,  from $\sum_{n\geq 0}{ e_{n, n}}=I$ in the strong operator topology,
we find
\begin{align*}
I=\tau(I)=\tau(\sum_{n\geq 0}{ e_{n, n}})=\sum_{n\geq 0} \tau(e_{n, n})= e_{0, 0}+\sum_{n\geq 1} e_{n+1, n+1}= I-e_{1, 1}\,,
\end{align*}
which is a contradiction.\\
As shown in \cite{DKW}, the tail algebra $\mathcal{T}_\varphi$ is isomorphic with $\bc^2$ as it is the algebra generated by $e_{0, 0}$, and coincides with the subalgebra fixed by the adjoint action of all $U_\s$. However, no normal shift-invariant conditional expectation
$E: \cb(\ell^2(\bn))\rightarrow\mathcal{T}_\varphi$ exists, that is there does not exist any normal conditional expectation $E$ onto the tail algebra such that $E(\pi_\varphi(a))=E(\pi_\varphi(\tau(a)))$.
Indeed, by  stationarity we would
have $E(e_{0, 0})=e_{0, 0}$ and $E(e_{n, n})=0$ for all $n\geq 1$ (because $E (e_{n, n})= E(e_{n+k, n+k})$ for all $k$ and $e_{n+k, n+k}$ tends to $0$ in the weak operator topology as $k$ diverges). But then
$E(I)= E(\sum_{n\geq 0} e_{n, n})=e_{0, 0}\neq I$.
\end{example}

Notice instead that on the infinite free product indexed by $\bz$, $\ast_\bz\ga$, the shift $\tau$, which is again uniquely determined by
$\tau(i_k(a))=i_{k+1}(a)$, $k$ in $\bz$, $a$ in $\ga$, acts as a $*$-automorphism.\\
If one starts with a shift-invariant state $\varphi$ on $\ast_\bn\ga$, then there exists a unique shift-invariant extension,
$\varphi_\bz$, to the larger $C^*$-algebra  $\ast_\bz\ga$ by virtue of the following proposition. The state
$\varphi_\bz$ can be interpreted as  the distribution of the extended bilateral process, which is obtained by symmetry out of the original process indexed by $\bn$.

\begin{lem}
The restriction map $\cs(\ast_\bz\ga)^{\tau}\ni\eta\mapsto \eta\upharpoonright_{\ast_\bn\ga}\in\cs(\ast_\bn\ga)^{\tau}$ is
an affine homeomorphism of compact convex sets.
\end{lem}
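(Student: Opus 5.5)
The plan is to show that the restriction map $R:\eta\mapsto\eta\upharpoonright_{\ast_\bn\ga}$ is affine, continuous, injective and surjective between the two sets of shift-invariant states. Since both sets are weak$^*$-closed convex subsets of the unit ball of the dual, they are compact Hausdorff, and a continuous bijection between compact Hausdorff spaces is a homeomorphism. Here $\ast_\bn\ga$ is regarded as the $C^*$-subalgebra of $\ast_\bz\ga$ generated by the $i_k(\ga)$ with $k\geq 1$, which we may assume since there is a canonical unital $*$-homomorphism identifying it with that subalgebra. The routine points are these.
\begin{itemize}
\item $R$ is affine and weak$^*$-continuous, being a restriction.
\item It maps into $\cs(\ast_\bn\ga)^\tau$ because the shift on $\ast_\bz\ga$ restricts to the shift on $\ast_\bn\ga$.
\end{itemize}

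For injectivity, I will use that the linear span of words $i_{j_1}(a_1)\cdots i_{j_n}(a_n)$ with $j_l\in\bz$ is dense in $\ast_\bz\ga$. Given such a word, choose $m$ with $j_l+m\geq 1$ for all $l$. Then $\tau^m$ of the word lies in $\ast_\bn\ga$, and invariance gives $\eta(w)=\eta(\tau^m(w))=\eta\upharpoonright_{\ast_\bn\ga}(\tau^m(w))$. Hence $\eta$ is determined on a dense subspace by its restriction, and so by continuity on all of $\ast_\bz\ga$.

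Surjectivity is the main point, and I would handle it by an inductive limit. Let $\ga_{\geq -m}$ be the $C^*$-subalgebra generated by the $i_k(\ga)$ with $k\geq -m$. Then $\tau^{m+1}$ restricts to a $*$-isomorphism $\ga_{\geq -m}\to\ast_\bn\ga$; this holds because both algebras are canonically the free product over an index set of the same cardinality, and the shift is a relabeling of indices. Define $\varphi_m:=\varphi\circ\tau^{m+1}$ on $\ga_{\geq -m}$. This is a state. The family is compatible: on $\ga_{\geq -m+1}\subseteq\ga_{\geq -m}$ we have $\varphi_m=\varphi\circ\tau\circ\tau^{m}=\varphi_{m-1}$, using stationarity of $\varphi$ (note that $\tau^m(\ga_{\geq -m+1})\subseteq\ast_\bn\ga$). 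Likewise $\varphi_0=\varphi\circ\tau=\varphi$ on $\ast_\bn\ga$.

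The union $\bigcup_m\ga_{\geq -m}$ is a dense $*$-subalgebra of $\ast_\bz\ga$. The compatible states define a positive linear functional of norm $1$ on it, which is bounded, so it extends uniquely to a state $\varphi_\bz$ on $\ast_\bz\ga$. Shift invariance is checked on words: for $w\in\ga_{\geq -m}$ we have $\tau(w)\in\ga_{\geq -m}$, and $\varphi_m(\tau w)=\varphi(\tau^{m+2}w)=\varphi(\tau^{m+1}w)=\varphi_m(w)$ by stationarity. Continuity then gives $\varphi_\bz\circ\tau=\varphi_\bz$.

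The only delicate step I anticipate is justifying that the subalgebra generated by $\{i_k(\ga):k\geq -m\}$ inside $\ast_\bz\ga$ is canonically the full free product over that index set, so that the isomorphism with $\ast_\bn\ga$ holds. This follows because there is a conditional-expectation-free retraction: the universal property yields a $*$-homomorphism $\ast_\bz\ga\to\ast_{k\geq -m}\ga$ sending $i_k(a)\mapsto i_k(a)$ for $k\geq -m$ and $i_k(a)\mapsto\varphi(\iota(a))1$ otherwise. A cleaner choice is to send these $i_k(a)$ to the image under any character or unital map; but the safest route is to compose with a unital embedding retraction, which exists for unital full free products. Here $\ga$ must admit a state, and composing a unital $*$-homomorphism $\ga\to\bc$ is needed. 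If $\ga$ has no character, I would instead avoid the issue by doing everything on the algebraic free product, where the statement is purely combinatorial, and define $\varphi_\bz$ there; boundedness then follows because $\varphi_\bz$ is positive and each word is dominated via the GNS representation of $\varphi$ pulled back by $\tau^{m+1}$.
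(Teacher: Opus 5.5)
Your argument is correct and is essentially the paper's proof: the paper also builds the extension by gluing the compatible states $\varphi\circ\tau^{n}$ on the increasing subalgebras $\ast_{\{-n,-n+1,\ldots\}}\ga$ and extending by density, and it gets injectivity from shift-invariance. Your compactness argument for the homeomorphism and your explicit compatibility and invariance checks simply fill in what the paper leaves implicit. One caveat concerns your last paragraph. The assignment $i_k(a)\mapsto\varphi(\iota(a))1$ is not a $*$-homomorphism unless $\varphi\circ\iota$ is a character. No such retraction is needed, however. The statement already presupposes $\ast_\bn\ga\subseteq\ast_\bz\ga$, which is a standard property of full free products and can be proved by amplifying a faithful representation, with no characters required. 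Once that inclusion is granted, $\tau^{m+1}(\ga_{\geq -m})=\ast_\bn\ga$ is automatic, because $\tau$ is an automorphism of $\ast_\bz\ga$.
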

\begin{proof}

The restriction map is easily seen to be injective. As for
its surjectivity, starting from a $\tau$-invariant state $\eta_0$ on $\ast_\bn\ga$, one first defines a sequence of states $\{\eta_n: n\in\bn\}$, where $\eta_n$ is the state on $\ast_{A_n}\ga$, with  $A_n:=\{-n,\ldots, 0, 1, \ldots\}$, defined as $\eta_n:=\eta_0\circ\tau^{n}$.  Now the sought shift-invariant extension
of $\eta_0$ is the unique state $\eta$ on $\ast_\bz\ga$ such that
the restriction of $\eta$ to $\ast_{A_n}\ga$ is $\eta_n$ for every $n$.\\
\end{proof}

For the sequel, we will need to consider the GNS representation, $\pi_{\varphi_\bz}$, of the extended state $\varphi_\bz$ on $\ast_\bz \ga$ as a representation of the smaller free product, $\ast_\bn\ga$. We then define the two von Neumann algebras
$$\mathcal{R}_{\varphi}:= \pi_\varphi(\ast_\bn\ga)''\,,\quad\mathcal{R}_{\varphi_\bz}:=\pi_{\varphi_\bz}(\ast_\bn\ga)'' ,$$
and note that $\ch_\bn:=\overline{\pi_{\varphi_\bz}(\ast_\bn\ga)\xi_{\varphi_\bz}}\,$ is a $\ast_\bn\ga$-invariant subspace, which is, as a representation of $\ast_\bn\ga$, unitarily equivalent to the GNS representation of the state $\varphi$.\\
Finally, we denote by $\Phi: \mathcal{R}_{\varphi_\bz}\rightarrow \mathcal{R}_{\varphi}$ the $*$-homomorphism given by
$\Phi(T)= T\upharpoonright_{\ch_\bn}$, $T$ in $\mathcal{R}_{\varphi_\bz}$.

The following Proposition offers an initial indication that the GNS representation of the canonical bilateral extension $\pi_{\varphi_\bz}$ is the natural framework for our analysis.
 
\begin{prop}\label{restriction}
The restriction $*$-homomorphism $\Phi: \mathcal{R}_{\varphi_\bz}\rightarrow \mathcal{R}_{\varphi}$ is a $*$-isomorphism of von Neumann algebras
if and only if the shift $\tau$ of  $\ast_\bn\ga $ admits a (unique) normal implementation on $\mathcal{R}_\varphi$.
  \end{prop}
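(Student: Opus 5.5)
Both directions go through the identification of $\pi_\varphi$ with the subrepresentation of $\pi_{\varphi_\bz}$ on $\ch_\bn$. Let $U$ be the unitary on $\ch_{\varphi_\bz}$ implementing the shift automorphism of $\ast_\bz\ga$, so that $U\xi_{\varphi_\bz}=\xi_{\varphi_\bz}$ and $U\pi_{\varphi_\bz}(x)U^*=\pi_{\varphi_\bz}(\tau(x))$. Since $\tau(\ast_\bn\ga)\subseteq\ast_\bn\ga$, the map $\mathrm{Ad}\,U$ restricts to a normal $*$-endomorphism $\tau_\bz$ of $\mathcal{R}_{\varphi_\bz}$ with $\tau_\bz(\pi_{\varphi_\bz}(x))=\pi_{\varphi_\bz}(\tau(x))$ for $x\in\ast_\bn\ga$. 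The map $\Phi$ is a normal, surjective $*$-homomorphism: normality is clear, and surjectivity holds because the image of a normal representation is a von Neumann algebra containing $\pi_\varphi(\ast_\bn\ga)$.

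\emph{($\Rightarrow$).} If $\Phi$ is a $*$-isomorphism, then $\Phi^{-1}$ is normal, since $*$-isomorphisms of von Neumann algebras are automatically normal. I would define $\widetilde\tau:=\Phi\circ\tau_\bz\circ\Phi^{-1}$. This is a normal $*$-endomorphism of $\mathcal{R}_\varphi$, and it satisfies $\widetilde\tau(\pi_\varphi(x))=\Phi(\pi_{\varphi_\bz}(\tau(x)))=\pi_\varphi(\tau(x))$. Uniqueness follows from weak density of $\pi_\varphi(\ast_\bn\ga)$ together with normality.

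\emph{($\Leftarrow$).} Since $\Phi$ is a normal surjection, its kernel has the form $\mathcal{R}_{\varphi_\bz}(1-z)$ for a central projection $z$, and $z$ is the projection onto $[\mathcal{R}_{\varphi_\bz}'\ch_\bn]$. So $\Phi$ is injective iff $\ch_\bn$ is separating for $\mathcal{R}_{\varphi_\bz}$, i.e.\ iff $T\in\mathcal{R}_{\varphi_\bz}$ and $T\upharpoonright_{\ch_\bn}=0$ imply $T=0$. Suppose a normal implementation $\widetilde\tau$ exists, and take $T\in\ker\Phi$. The key claim is
\[
\Phi\circ\tau_\bz=\widetilde\tau\circ\Phi \quad\text{on } \mathcal{R}_{\varphi_\bz}.
\]
Both sides are normal $*$-homomorphisms $\mathcal{R}_{\varphi_\bz}\to\mathcal{R}_\varphi$, and they agree on the weakly dense subalgebra $\pi_{\varphi_\bz}(\ast_\bn\ga)$. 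Hence they agree everywhere, using that normal maps agreeing on a $\sigma$-weakly dense $*$-subalgebra coincide by Kaplansky density. It follows that $\tau_\bz^n(T)\upharpoonright_{\ch_\bn}=\widetilde\tau^n(\Phi(T))=0$, that is, $U^nTU^{-n}$ vanishes on $\ch_\bn$. Equivalently, $T$ vanishes on $U^{-n}\ch_\bn=\overline{\pi_{\varphi_\bz}(\ast_{A_n}\ga)\xi_{\varphi_\bz}}$, where $A_n=\{-n,\dots\}$. Indeed, $U^{-n}\pi_{\varphi_\bz}(x)\xi_{\varphi_\bz}=\pi_{\varphi_\bz}(\tau^{-n}(x))\xi_{\varphi_\bz}$. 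These subspaces increase in $n$ and their union is dense in $\ch_{\varphi_\bz}$, because $\bigcup_n\ast_{A_n}\ga$ is dense in $\ast_\bz\ga$. Therefore $T=0$, and $\Phi$ is injective, hence a $*$-isomorphism.

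The main obstacle is the intertwining identity $\Phi\circ\tau_\bz=\widetilde\tau\circ\Phi$. This is exactly where normality of $\widetilde\tau$ is used: without it one only knows the identity on the $C^*$-level. I would check carefully that $\tau_\bz$ is indeed normal on $\mathcal{R}_{\varphi_\bz}$, which holds as it is a restriction of $\mathrm{Ad}\,U$, and that the density argument applies to $*$-homomorphisms that need not be unital onto their range. Here all maps are unital, so this is harmless.
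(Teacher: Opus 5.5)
Your proof is correct and follows the paper's argument essentially step for step. For ($\Rightarrow$) you conjugate the shift on $\mathcal{R}_{\varphi_\bz}$ by $\Phi$. For ($\Leftarrow$) you extend the identity $\Phi\circ\mathrm{Ad}\,U=\widetilde\tau\circ\Phi$ from the $C^*$-level by normality, so $T\in\ker\Phi$ vanishes on $U^{-n}\ch_\bn$ for every $n$, and then $T=0$ by density of $\bigcup_n\tau^{-n}(\ast_\bn\ga)$ in $\ast_\bz\ga$. You also spell out two points the paper leaves implicit: that $\Phi$ is surjective, and that $\Phi^{-1}$ is automatically normal.
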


  \begin{proof}
  If $\Phi$ is a $*$-isomorphism, then the normal implementation of the shift on $\mathcal{R}_\varphi$ is obtained
  as $\Phi\circ\tau\circ\Phi^{-1}$, where $\tau$ is the shift on $\mathcal{R}_{\varphi_\bz}$.\\
  For the converse implication, let us suppose that the shift admits a normal implementation $\widetilde{\tau}$ on $\mathcal{R}_\varphi$. As the equality  $\Phi\circ {\rm ad} U_\tau=\widetilde{\tau}\circ \Phi$, where $U_\tau$ is the implementing unitary of $\tau$ (thought of as a $*$-automorphism of $\ast_\bz\ga$) in
$\pi_{\varphi_\bz}$,  clearly holds at the $C^*$-algebra level, it continues to hold at the von Neumann level as well. In particular,  $\rm{Ker}\,\Phi$ is a shift-invariant
ideal. We are now ready to show that $\Phi$ is injective. If $T$ in $\mathcal{R}_{\varphi_\bz}$ is such that $\Phi(T)=0$, then by shift-invariance of  $\rm{Ker}\,\Phi$ we must have $ \Phi(U_\tau^k T U_\tau ^{-k})=0$ for all naturals $k$, that is
 $ U_\tau^k T U_\tau ^{-k}\pi_{\varphi_\bz}(x)\xi_{\varphi_\bz}=0$ for all $x$ in $\ast_\bn\ga$, thus
 $ T \pi_{\varphi_\bz}(\tau ^{-k}(x))\xi_{\varphi_\bz}=0$ (the $\tau$ appearing in the right-hand side of this equality is the shift automorphism on $\ast_\bz\ga$). Since $\bigcup_k \tau ^{-k}(\ast_\bn\ga)$ is a norm dense $*$-algebra of 
$\ast_\bz\ga$, we finally see that $T$ vanishes on a norm dense subspace and thus $T=0$.
  \end{proof}

\begin{rem} We would like to observe that if $\varphi$ has central support, see Remark \ref{censupport}, then $\tau$ certainly admits a normal implementation on $\mathcal{R}_\varphi$, which means Proposition \ref{restriction} applies yielding the $*$-isomorphism $\mathcal{R}_{\varphi_\bz}\cong \mathcal{R}_{\varphi}$. In particular, this applies to the classical/commutative case.\\
To see that the shift has a normal implementation, note that
$\pi_\varphi(x)=0$  implies $\pi_\varphi(\tau(x))=0$ by the assumption on the central support of $\varphi$.
Therefore, the map $\pi_\varphi(x)\mapsto \pi_\varphi(\tau(x))$ is a well-defined $*$-endomorphism of $\pi_\varphi(\ast_\bn\ga)$.
Furthermore, since $\varphi$ is shift-invariant, the above map is isometric w.r.t. the GNS Hilbert space topology. The conclusion then follows from this general fact: the Hilbert space topology induced by a faithful state on a von Neumann algebra is the same as its strong operator topology on bounded sets.
\end{rem}

We end the section with the following terminology that will be used throughout the paper.
\begin{defin}\label{localized}
An element $x\in\ast_\bn\ga$ (resp. $x\in\ast_\bz\ga$, $x\in \mathcal{R}_{\varphi_\bz}$) is said to be \emph{localized} if $x\in \ast_F \ga \subset \ast_\bn\ga$ (resp. $x\in \ast_F \ga \subset \ast_\bz\ga$, $x\in  \pi_{\varphi_\bz}(\ast_F \ga)'' \subset \mathcal{R}_{\varphi_\bz}$), where $F\subset\bn$ (resp. $F\subset\bz$) is a finite subset and the inclusion is the obvious one. In such a case we will say that the support of $x$ is contained in $F$. Given two (finite) subsets $F, G\subset\bn$, we wite
$F< G$ to mean that $\max F < \min G$, and analogously for $F>G$.\\
For localized elements $x,y$ we will say that the support of $x$ lies to the left/right of (resp. is disjoint from) that of $y$ if $F_X<F_Y/ F_X>F_Y$ (resp. $F_X\cap F_Y=\emptyset$) for some finite sets $F_X, F_Y$ which contain the support of $x,y$ respectively.

\end{defin}

\section{Non-commutative Olshen theorem}\label{HS}

It is a known fact that the Olshen theorem may fail to hold in the GNS representation of an exchangeable state on
the free product indexed by $\bn$. One reason of the failure is that  the stationary algebra might not be defined, as a consequence of the fact that the shift itself may not be implemented as a normal $*$-endomorphism on the GNS representation of the given state, see Example \ref{noshift}.
Another is that the tail algebra may be  stricly included in the exchangeable algebra. Counterexamples of this sort show up even in the very tame case of processes whose distribution factorizes through tensor products in the sense of Definition \ref{distri}. In the following example, $p: \ast_\bn \ga \rightarrow \otimes_\bn \ga$ is the canonical quotient projection.

\begin{example}\label{tensprodex}
On the sample algebra $\ga:=\bm_2(\bc)$, we consider the state $\om(A)=A_{1,1}$ ($\om$ is the vector state associated with $e_1$, with
$\{e_1, e_2\}$ being the canonical orthonormal basis of $\bc^2$).\\
 On the infinite tensor product $\otimes_\bn\ga$, we consider the product state $\otimes\om$, which is obviously permutation
invariant.
Since $\om$ does not have central support, in the sense recalled in Remark \ref{censupport}, its product $\otimes\om$ does not have central support either, as follows from
{\it e.g.} Proposition \ref{csupport}.
Now, if the fixed-point algebra under permutation in the GNS representation of ($\otimes\om\circ p$, $\ast_\bn \ga$), 
$(\pi_{\otimes \omega \circ p}(\ast_\bn \ga)'')^{\bp_\bn}$,
 were  trivial, then
by \cite[Theorem 4.3.20]{BR1} the product state would have central support, which is not the case.\\
On the other hand, the tail algebra of $(\otimes\om \circ p)$ is trivial, as could be verified directly. However, the fact that the
tail algebra is trivial can also be seen as a consequence of Theorem \ref{01lawspread}.
\end{example}


 We complete the description of our setting with a few definitions and notation. We recall that
$\pi_{\varphi_\bz}$ denotes the GNS representation of the canonical extension $\varphi_\bz$ on $\ast_\bz \ga$, thought of as a representation
of the smaller free product, $\ast_\bn\ga$.\\
The tail algebra of $\varphi$ seen in the larger Hilbert space of the bilateral extension is
$$\mathcal{T}_{\varphi_\bz}:= \bigcap_n\{ \pi_{\varphi_\bz}(\iota_k(\ga)): k\geq n\}''\, .$$
The shift-invariant algebra is
$$\mathcal{R}_{\varphi_\bz}^{\tau}:= \{T\in \mathcal{R}_{\varphi_\bz}: TU_\tau=U_\tau T \}\,.$$
Finally, if the state $\varphi$ is also exchangeable, then we can define the exchangeable algebra as well by
$$\mathcal{R}_{\varphi_\bz}^{\bp_\bz}:= \{T\in \mathcal{R}_{\varphi_\bz}: TU_\s=U_\s T, \s\in\bp_\bz \}\,,$$
where $U_\s$ is the unitary operator acting on $\ch_{\varphi_\bz}$ that implements the automorphism of
$\ast_\bz\ga$ induced by the permutation $\s$ in $\bp_\bz$.\\

We are ready to state the non-commutative Olshen Theorem.

\begin{thm}\label{HSonZ}
For any spreadable state $\varphi$ on $\ast_\bn\ga$ one has 
$$\mathcal{R}_{\varphi_{\bz}}^{\tau}=\mathcal{T}_{\varphi_{\bz }}\, .$$
Moreover,  the restriction map $\Phi: \mathcal{T}_{\varphi_\bz}\rightarrow \mathcal{T}_\varphi$ is a $*$-isomorphism.\\
If $\varphi$ is also exchangeable, then  one has
$$\mathcal{R}_{\varphi_\bz}^{\bp_\bz}=\mathcal{R}_{\varphi_\bz}^{\tau}=\mathcal{T}_{\varphi_\bz}\,.$$
\end{thm}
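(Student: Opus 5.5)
The proof splits into the two inclusions between $\mathcal{R}_{\varphi_\bz}^{\tau}$ and $\mathcal{T}_{\varphi_\bz}$, followed by the statements about $\Phi$ and about exchangeability. Throughout I write $\mathcal{R}_{\geq n}:=\pi_{\varphi_\bz}(\ast_{\{k\geq n\}}\ga)''$, so that $\mathcal{T}_{\varphi_\bz}=\bigcap_n\mathcal{R}_{\geq n}$. On $\ch_{\varphi_\bz}$ the shift is implemented by the unitary $U_\tau$, which satisfies $U_\tau\mathcal{R}_{\geq n}U_\tau^*=\mathcal{R}_{\geq n+1}$. Note that $U_\tau^*$ acts on all of $\ch_{\varphi_\bz}$ and not only on $\ch_\bn$.

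\emph{Step 1: $\mathcal{T}_{\varphi_\bz}\subseteq\mathcal{R}^{\tau}_{\varphi_\bz}$.} Let $T\in\mathcal{T}_{\varphi_\bz}$. Then $U_\tau T U_\tau^*$ lies in $\bigcap_n\mathcal{R}_{\geq n+1}=\mathcal{T}_{\varphi_\bz}$, so it suffices to show $U_\tau TU_\tau^*=T$. Since $\xi_{\varphi_\bz}$ is cyclic, it is enough to compare matrix elements on vectors $\pi(x)\xi$ with $x$ localized in $\ast_\bz\ga$.
\begin{itemize}
\item Take localized $x,y$ supported in $[-m,m]$.
\item Approximate $T$ strongly, via Kaplansky density, by elements $t\in\pi(\ast_{[N,M]}\ga)$ with $N>m$ arbitrarily large.
\item For such $t$, the quantities $\varphi_\bz(y^*tx)$ and $\varphi_\bz(y^*\tau(t)x)$ coincide by spreadability, transported to $\bz$. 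The map fixing $[-m,m]$ and sending $[N,M]$ to $[N+1,M+1]$ is order preserving, and spreadability of $\varphi$ passes to $\varphi_\bz$ for order-preserving injections of finite subsets of $\bz$, after shifting everything into $\bn$.
\end{itemize}
The delicate point is that $T$ must be approximated by $t$ whose support lies to the right of $N$, uniformly in the approximation. The fix is to use, for each $N$, that $T\in\mathcal{R}_{\geq N}$, and to approximate in the strong$^*$ topology on the two finitely many vectors $\pi(x)\xi$ and $\pi(\tau^{-1}(x))\xi$. Then $\langle T\pi(x)\xi,\pi(y)\xi\rangle=\langle T\pi(\tau^{-1}x)\xi,\pi(\tau^{-1}y)\xi\rangle$, since both sides are limits of equal quantities. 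This gives $U_\tau^*TU_\tau=T$.

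\emph{Step 2: $\mathcal{R}^{\tau}_{\varphi_\bz}\subseteq\mathcal{T}_{\varphi_\bz}$.} I expect this to be the main obstacle. Let $T\in\mathcal{R}_{\varphi_\bz}=\mathcal{R}_{\geq 1}$ commute with $U_\tau$. Then $T=U_\tau^nTU_\tau^{-n}\in\mathcal{R}_{\geq n+1}$ for every $n$, so $T\in\mathcal{T}_{\varphi_\bz}$. This is immediate only because the stationary algebra is defined inside $\mathcal{R}_{\varphi_\bz}=\pi_{\varphi_\bz}(\ast_\bn\ga)''$. The point of working with the bilateral extension is exactly that $U_\tau$ is unitary and its adjoint action maps $\mathcal{R}_{\geq1}$ onto $\mathcal{R}_{\geq n+1}$, which is not available in $\pi_\varphi$ (compare Example~\ref{noshift}).

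\emph{Step 3: $\Phi\upharpoonright_{\mathcal{T}_{\varphi_\bz}}$ is a $*$-isomorphism onto $\mathcal{T}_\varphi$.}
\begin{itemize}
\item \emph{Injectivity.} If $T\in\mathcal{T}_{\varphi_\bz}$ vanishes on $\ch_\bn$, then, since $T$ commutes with $U_\tau$ by Step 1, it also vanishes on $U_\tau^{-k}\ch_\bn$. These subspaces exhaust a dense subspace, exactly as in the proof of Proposition~\ref{restriction}, so $T=0$.
\item \emph{Range in $\mathcal{T}_\varphi$.} $\Phi$ is a normal restriction to an invariant subspace, so it maps $\mathcal{R}_{\geq n}$ into $\pi_\varphi(\ast_{\geq n}\ga)''$ and hence $\mathcal{T}_{\varphi_\bz}$ into $\mathcal{T}_\varphi$.
\item \emph{Surjectivity.} Let $S\in\mathcal{T}_\varphi$ and define $\widetilde S$ on $U_\tau^{-k}\ch_\bn$ by $U_\tau^{-k}SU_\tau^{k}$. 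This is consistent, because $S$ is shift invariant on $\ch_\bn$ in the sense of Step 1 applied to $\varphi$: the same matrix-element argument works there for tail elements, using spreadability inside $\bn$. The resulting $\widetilde S$ is bounded, lies in every $\mathcal{R}_{\geq n}$ by a strong-limit argument, and satisfies $\Phi(\widetilde S)=S$.
\end{itemize}
Since a normal injective $*$-homomorphism between von Neumann algebras has weakly closed range, the isomorphism follows.

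\emph{Step 4: the exchangeable case.} For $\sigma\in\bp_\bz$ supported in $[-m,m]$ and $T\in\mathcal{T}_{\varphi_\bz}$, approximate $T$ by elements supported in $[N,\infty)$ with $N>m$. The automorphism $\alpha_\sigma$ fixes these elements, so $U_\sigma T U_\sigma^*=T$ in the limit, using the same strong$^*$ approximation on finitely many vectors as in Step 1. Hence $\mathcal{T}_{\varphi_\bz}\subseteq\mathcal{R}^{\bp_\bz}_{\varphi_\bz}$.

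For the reverse inclusion, let $T\in\mathcal{R}^{\bp_\bz}_{\varphi_\bz}$ and let $\sigma_n$ be the permutation exchanging the blocks $[1,n]$ and $[n+1,2n]$. Take $x$ localized in $[1,n]$. Then $U_{\sigma_n}$ moves the support of $x$ to the right of $n$, so the $\sigma_n$-invariance of $T$ places its action on vectors built from such $x$ inside $\mathcal{R}_{\geq n+1}$. A cleaner route is to use $\tau$ directly: the shift agrees with a finite permutation $\sigma$ on any finite window, which gives $U_\tau^*TU_\tau=T$ on a dense set. Then $T\in\mathcal{R}^\tau_{\varphi_\bz}=\mathcal{T}_{\varphi_\bz}$.
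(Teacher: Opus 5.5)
Your arguments for $\mathcal{R}^{\tau}_{\varphi_\bz}=\mathcal{T}_{\varphi_\bz}$, for the injectivity of $\Phi$, and for the exchangeable case are essentially the paper's. The Kaplansky and strong$^*$ approximations are not needed: for fixed localized $x,y$, the map $T\mapsto\langle U_\tau TU_\tau^*\pi_{\varphi_\bz}(x)\xi_{\varphi_\bz},\pi_{\varphi_\bz}(y)\xi_{\varphi_\bz}\rangle-\langle T\pi_{\varphi_\bz}(x)\xi_{\varphi_\bz},\pi_{\varphi_\bz}(y)\xi_{\varphi_\bz}\rangle$ is weakly continuous and vanishes on the weakly dense $*$-algebra $\pi_{\varphi_\bz}(\ast_{k\geq N}\ga)$ of your $\mathcal{R}_{\geq N}$.

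The gap is in the surjectivity of $\Phi$, where you leave the paper's route and two steps are not justified. The first is consistency of $\widetilde S$. On the nested subspaces $U_\tau^{-k}\ch_\bn\subseteq U_\tau^{-k-1}\ch_\bn$, consistency amounts to $SV_\tau=V_\tau S$ on $\ch_\bn\cong\ch_\varphi$, where $V_\tau=U_\tau\upharpoonright_{\ch_\bn}$ is only an isometry. Running the matrix-element argument of Step 1 inside $\bn$ gives $\langle SV_\tau u,V_\tau u'\rangle=\langle Su,u'\rangle$, i.e.\ $V_\tau^*SV_\tau=S$. For a non-unitary isometry this is weaker than commutation, and this isometry-versus-unitary issue is exactly what the bilateral extension is meant to avoid. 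It can be repaired because $\mathcal{T}_\varphi$ is a $*$-algebra: applying the same identity to $S^*S$ gives $\|SV_\tau u-V_\tau Su\|^2=\|Su\|^2+\|Su\|^2-2\|Su\|^2=0$. As written, however, ``shift invariant in the sense of Step 1'' does not give what you need.

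The second and more serious gap is the claim that $\widetilde S$ ``lies in every $\mathcal{R}_{\geq n}$ by a strong-limit argument''. This is the entire content of surjectivity, and no argument is given. Approximants $\pi_\varphi(s_\alpha)\to S$ with $s_\alpha\in\ast_{j\geq n+k}\ga$, conjugated by $U_\tau^{-k}$, converge to $\widetilde S$ only on $U_\tau^{-k}\ch_\bn$. To conclude you need a diagonal argument over $k$. Above all you need a uniform bound on the approximants as operators on $\ch_{\varphi_\bz}$, not merely on $\ch_\bn$. There are two ways to get it:
\begin{itemize}
\item choose $\|s_\alpha\|\leq\|S\|$ in the $C^*$-algebra itself, using Kaplansky together with norm-preserving lifts along $\ast_{j\geq m}\ga\to\pi_\varphi(\ast_{j\geq m}\ga)$;
\item use lifts $T_m\in\mathcal{R}_{\geq m}$ with $\Phi(T_m)=S$ and $\|T_m\|\leq\|S\|$. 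These exist because $\Phi\upharpoonright_{\mathcal{R}_{\geq m}}$ is a normal $*$-homomorphism onto $\pi_\varphi(\ast_{j\geq m}\ga)''$ whose kernel is $\mathcal{R}_{\geq m}(I-z)$ for a central projection $z$.
\end{itemize}
Once such lifts are available, the paper's argument already finishes. Any weak cluster point of $(T_m)$ lies in $\bigcap_m\mathcal{R}_{\geq m}=\mathcal{T}_{\varphi_\bz}$ and restricts to $S$, so neither $\widetilde S$ nor the consistency check is needed.

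If you prefer to keep your explicit $\widetilde S$, you can complete it without lifts via the bicommutant. For $k\geq 0$, let $P_k$ be the projection onto $U_\tau^{-k}\ch_\bn$. Then $P_k$ lies in $\pi_{\varphi_\bz}(\ast_{j\geq n}\ga)'$ and reduces $\widetilde S$. On $P_k\ch_{\varphi_\bz}$, $\widetilde S$ acts as $U_\tau^{-k}SU_\tau^{k}$. Since $S\in\pi_\varphi(\ast_{j\geq n+k}\ga)''$, this operator lies in $\bigl(\pi_{\varphi_\bz}(\ast_{j\geq n}\ga)\upharpoonright_{P_k\ch_{\varphi_\bz}}\bigr)''$. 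Hence $\widetilde S$ commutes with $P_kYP_k$ for every $Y\in\pi_{\varphi_\bz}(\ast_{j\geq n}\ga)'$. Letting $P_k\to I$ strongly gives $\widetilde SY=Y\widetilde S$, so $\widetilde S\in\mathcal{R}_{\geq n}$.
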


\begin{proof}
We start by dealing with the exchangeable case, where more equalities need to be proved.\\
The inclusions $\mathcal{R}_{\varphi_\bz}^{\tau}\subseteq\mathcal{T}_{\varphi_\bz}\subseteq\mathcal{R}_{\varphi_\bz}^{\bp_\bz}$ can be verified as follows. For the first, any shift-invariant $T$ lies {\it a fortiori} in the tail algebra $\mathcal{T}_{\varphi_\bz}$ due to the equality
$T=\tau^n(T)$, for all $n$. For the second, any $T$ in the tail algebra $\mathcal{T}_{\varphi_\bz}$ is fixed by all permutations: if $\s$ is a permutation such that $\sigma(k)=k$ for all $k\geq n$ then $U_\s TU_\s ^*=T$  as $T$ lies in $\{ \pi_{\varphi_\bz}(\iota_k(\ga)): k\geq n\}''$.\\
The chain of equalities in the statement will then follow as long as we show that 
$\mathcal{R}_{\varphi_\bz}^{\bp_\bz}\subseteq\mathcal{R}_{\varphi_\bz}^{\tau}$.
To this end, we claim that the shift can be obtained as the following limit $\tau=\lim_n {\rm ad}\, (U_{\s_{-n}}\cdots U_{\s_n})$, which holds pointwise in the strong operator topology ($\s_i$ is the permutation that swaps $i$ and $i+ 1$). If $T$ in $\mathcal{R}_{\varphi_\bz}$ is fixed by all permutations, we have ${\rm ad}\, U_{\s_{-n}\cdots \s_n}(T)=T$ for all $n$, hence, taking the limit as $n$ goes to infinity, we find $\tau(T)=T$, {\it i.e.}
$T$ in $\mathcal{R}_{\varphi_\bz}^{\tau}$.\\
As for the claim, it is enough to show that the sequence of unitaries $U_{\s_{-n}}\cdots U_{\s_n}$ strongly converges to
$U_\tau$. Since the sequence $\{U_{\s_{-n}}\cdots U_{\s_n}: n\}$  is obviously bounded, it suffices to verify the limit equality on vectors of the form $\pi_{\varphi_\bz}(x)\xi_{\varphi_\bz}$ with $x=\iota_{i_1}(a_1)\cdots\iota_{i_k}(a_k)$ a localized monomial of the free product 
$\ast_\bz\ga$. But for such a vector we have $U_{\s_{-n}}\cdots U_{\s_n}\pi_{\varphi_\bz}(x)\xi_{\varphi_\bz}=\pi_{\varphi_\bz}(\tau(x))\xi_{\varphi_\bz}$ as soon as $n$ is greater than $\max\{|i_1|, \ldots,| i_k| \}$.\\
We now tackle the case of  spreadable state $\varphi$. Since the inclusion
$\mathcal{R}_{\varphi_{\bz}}^{\tau}\subseteq\mathcal{T}_{\varphi_{\bz }}$ is trivially satisfied, we need only show that
$\mathcal{T}_{\varphi_{\bz }}\subseteq\mathcal{R}_{\varphi_{\bz}}^{\tau}$ holds as well.
Let $T$ be an element of $\mathcal{T}_{\varphi_{\bz }}$. We will prove that $\tau(T)=T$ by showing that for any localized
$a, b$ in $\ast_\bz\ga$ one has
$$\langle \tau(T)\pi_{\varphi_\bz}(a)\xi_{\varphi_\bz}, \pi_{\varphi_\bz}(b)\xi_{\varphi_\bz} \rangle=\langle T\pi_{\varphi_\bz}(a)\xi_{\varphi_\bz}, \pi_{\varphi_\bz}(b)\xi_{\varphi_\bz} \rangle\, .$$
This can be seen as follows. Since $a, b$ are localized, there exists $n$ in $\bn$ such that the support of both  $a$ and $b$ lies to the left of $n$. Because $T$ sits in the tail algebra, we can assume its support lies entirely to the right of $n$. By spreadability of $\varphi$ we must have $\langle \tau(T)\pi_{\varphi_\bz}(a)\xi_{\varphi_\bz}, \pi_{\varphi_\bz}(b)\xi_{\varphi_\bz} \rangle=\langle T\pi_{\varphi_\bz}(a)\xi_{\varphi_\bz}, \pi_{\varphi_\bz}(b)\xi_{\varphi_\bz} \rangle$ by considering a monotone map of $\bz$ to itself which shifts by $+1$ the support of $T$ while not moving at all the supports of $a$ and $b$.\\
Finally, we are left with the task of proving that the restriction map $\Phi$ in the statement
is a $*$-isomorphism. To this end, we start by observing that the restriction map sends $\mathcal{T}_{\varphi_\bz}$ to  $\mathcal{T}_\varphi$ because  one has $\Phi(\{ \pi_{\varphi_\bz}(\iota_k(\ga)): k\geq n\}'')=\{ \pi_{\varphi}(\iota_k(\ga)): k\geq n\}''$ for every $n$.\\
 We move on to prove that the restriction map $\mathcal{T}_{\varphi_\bz}\ni T\mapsto T\upharpoonright_{\ch_\bn}\in\mathcal{T}_\varphi$  a $*$-isomorphism. We first show that $\Phi$ is surjective.
As already remarked,  for every $n$ one has $\Phi(\{ \pi_{\varphi_\bz}(\iota_k(\ga)): k\geq n\}'')=\{ \pi_{\varphi}(\iota_k(\ga)): k\geq n\}''$.  Now if $S$ lies in  $\bigcap_n\{ \pi_{\varphi}(\iota_k(\ga)): k\geq n\}''$, then there exist $T_n$ in 
$\{ \pi_{\varphi_\bz}(\iota_k(\ga)): k\geq n\}''$ such that $\Phi(T_n)=S$ and $\|T_n\|\leq \|S\|$ for all $n$.
By boundedness the sequence $\{T_n: n\in\bn\}$ has at least one accumulation point, say $T$, in the weak operator topology. By construction $T$ sits in $\bigcap_n\{ \pi_{\varphi_\bz}(\iota_k(\ga)): k\geq n\}''$ and $\Phi(T)=S$ by continuity of $\Phi$ w.r.t. the weak operator topology.\\
To conclude, let us prove that $\Phi$ is injective as well.
Suppose that $T$ in $\mathcal{T}_{\varphi_\bz}$ is such that $\Phi(T)=0$, that is
$T \pi_{\varphi_\bz}(x)\xi_{\varphi_\bz}=0$ for all $x$ in $\ast_\bn\ga$.
Thanks to the first part, such a $T$ is fixed by the shift, meaning we have
$(U_\tau)^nT(U_\tau)^{-n} \pi_{\varphi_\bz}(x)\xi_{\varphi_\bz}=0$, hence $T(U_\tau)^{-n} \pi_{\varphi_\bz}(x)\xi_{\varphi_\bz}=0$, from which it follows that $T=0$ as $\bigcup_n \tau ^{-n}(\ast_\bn\ga)$ is a norm dense $*$-algebra of $\ast_\bz\ga$.
\end{proof}
In order to state the Hilbert-space version of the non-commutative Olshen theorem, we first need to set some notation. In particular, we need to consider subpaces of fixed vectors under the action of the implementing isometries. If $\varphi$ is a spreadable
state of $\ast_\bn\ga$, then one may consider the subspace of shift-invariant vectors
$$\ch_{\varphi}^{\tau}:=\{x\in\ch_{\varphi}: V_\tau x=x\},$$
where $V_\tau$ is the proper isometry acting on $\ch_\varphi$ determined by $V_\tau \pi_\varphi(x)\xi_\varphi= \pi_\varphi(\tau(x))\xi_\varphi$, $x$ in $\ast_\bn\ga$.
We denote by $E_\varphi$ the orthogonal projection onto $\ch_{\varphi}^{\tau}$. We consider
the subspace of $\bj_\bn$-invariant vectors
$$\ch_{\varphi}^{\bj_\bn}:=\{x\in\ch_\varphi: V_hx=x\,\,\textrm{for all}\, h\in \bj_\bn\}\, ,$$
where $V_h$ is the isometry implementing $\a_h$, for all $h$ in $\bj_\bn$.\\
If the state $\varphi$ is also exchangeable, then we can define the subspace of exchangeable vectors as well, that is
$$\ch_{\varphi}^{\bp_\bn}:=\{x\in\ch_{\varphi}: U_gx=x\,\textrm{for all}\, g\in\bp_\bn  \}\, .$$

\begin{thm}\label{HSspreadonL2}
For any   spreadable state $\varphi$  on $\ast_\bn\ga$ one has
$$\ch_{\varphi}^{\tau}=\ch_{\varphi}^{\bj_\bn} = \overline{\mathcal{T}_\varphi \xi_\varphi}\, .$$
If $\varphi$ is also exchangeable, then
$$\ch_{\varphi}^{\tau}=\ch_{\varphi}^{\bj_\bn}=\ch_{\varphi}^{\bp_\bn}=\overline{\mathcal{T}_\varphi \xi_\varphi}.$$
\end{thm}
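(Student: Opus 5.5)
We will prove the chain of inclusions
$$\overline{\mathcal{T}_\varphi\xi_\varphi}\subseteq \ch_\varphi^{\bj_\bn}\subseteq\ch_\varphi^{\tau}\subseteq\overline{\mathcal{T}_\varphi\xi_\varphi},$$
and in the exchangeable case insert $\ch_\varphi^{\bp_\bn}$ into it.

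\textbf{The inclusion $\ch_\varphi^{\bj_\bn}\subseteq\ch_\varphi^\tau$.} This is immediate, since $\tau$ itself belongs to $\bj_\bn$.

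\textbf{The inclusion $\overline{\mathcal{T}_\varphi\xi_\varphi}\subseteq\ch_\varphi^{\bj_\bn}$.} Fix $T\in\mathcal{T}_\varphi$ and $h\in\bj_\bn$. Since $h$ has cofinite range, it acts as a translation $k\mapsto k+m$ for all $k$ beyond some $N$. Write $T$ as a strong limit of a bounded net $\pi_\varphi(x_\lambda)$ with $x_\lambda$ in the $*$-algebra generated by $\{i_k(\ga):k\geq N\}$. Then $V_h\pi_\varphi(x_\lambda)\xi_\varphi=\pi_\varphi(\tau^m(x_\lambda))\xi_\varphi$, so it suffices to show $V_\tau T\xi_\varphi=T\xi_\varphi$ for $T$ in the tail.

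For this we pass to the bilateral picture. By Theorem \ref{HSonZ}, $T=\Phi(\widetilde T)$ with $\widetilde T\in\mathcal{T}_{\varphi_\bz}=\mathcal{R}^\tau_{\varphi_\bz}$. Identify $\ch_\varphi$ with $\ch_\bn$; under this identification $V_\tau$ is the restriction of $U_\tau$ to $\ch_\bn$, which indeed maps $\ch_\bn$ into itself. Then
$$V_\tau T\xi_\varphi=U_\tau\widetilde T\xi_{\varphi_\bz}=\widetilde T U_\tau\xi_{\varphi_\bz}=\widetilde T\xi_{\varphi_\bz}=T\xi_\varphi.$$
Taking closures gives the inclusion.

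\textbf{The inclusion $\ch_\varphi^\tau\subseteq\overline{\mathcal{T}_\varphi\xi_\varphi}$.} This is the main step. Let $v\in\ch_\varphi^\tau$. Via $\ch_\bn\subseteq\ch_{\varphi_\bz}$, $U_\tau v=V_\tau v=v$, so $v$ is $U_\tau$-invariant and hence also $U_\tau^{-1}$-invariant. By the mean ergodic theorem, $v=\lim_N\frac1N\sum_{n<N}U_\tau^{n}w$ for any $w$ with $Pw=v$, where $P$ is the projection onto the $U_\tau$-fixed vectors. The key claim is that $P$ maps $\mathcal{R}_{\varphi_\bz}\xi_{\varphi_\bz}$ into $\overline{\mathcal{T}_{\varphi_\bz}\xi_{\varphi_\bz}}$.

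I would establish the claim as follows. For localized $x$, the Ces\`aro averages $\frac1N\sum_n\pi_{\varphi_\bz}(\tau^n(x))$ form a bounded sequence; take a weak-operator accumulation point $S$. Then $S$ lies in each $\{\pi_{\varphi_\bz}(\iota_k(\ga)):k\geq m\}''$, because the averages eventually have support beyond $m$ up to terms whose contribution vanishes as $N\to\infty$. Hence $S$ is in the tail. Moreover $S\xi_{\varphi_\bz}$ is a weak limit point of $\frac1N\sum_n U_\tau^n\pi(x)\xi$, which converges in norm to $P\pi(x)\xi$. So $P\pi(x)\xi=S\xi\in\mathcal{T}_{\varphi_\bz}\xi_{\varphi_\bz}$. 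Density of localized vectors and continuity of $P$ give $\mathrm{Ran}\,P=\overline{\mathcal{T}_{\varphi_\bz}\xi_{\varphi_\bz}}$.

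Finally, $v=Pv$ lies in $\ch_\bn\cap\overline{\mathcal{T}_{\varphi_\bz}\xi_{\varphi_\bz}}$. Since $\widetilde T\xi_{\varphi_\bz}=\Phi(\widetilde T)\xi_\varphi\in\ch_\bn$ (as $\xi_{\varphi_\bz}\in\ch_\bn$ and $\widetilde T$ leaves $\ch_\bn$ invariant), this space is $\overline{\mathcal{T}_\varphi\xi_\varphi}$. The delicate point is checking that the weak limit $S$ really belongs to every tail von Neumann algebra; if that fails, I would instead use spreadability as in the proof of Theorem \ref{HSonZ} to show that $P$ commutes with the tail structure.

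\textbf{Exchangeable case.} For $T$ in the tail, $U_g$ with $g$ a finite permutation fixes $T\xi_\varphi$, since $g$ acts trivially far out and $U_g\xi_\varphi=\xi_\varphi$. This gives $\overline{\mathcal{T}_\varphi\xi_\varphi}\subseteq\ch_\varphi^{\bp_\bn}$. Conversely, a $\bp_\bn$-invariant vector is $\tau$-invariant by the strong limit argument of Theorem \ref{HSonZ}. Here the relevant products are $U_{\s_1}\cdots U_{\s_n}$, which converge strongly to $V_\tau$ on localized vectors. This yields $\ch_\varphi^{\bp_\bn}\subseteq\ch_\varphi^\tau$ and closes the chain.
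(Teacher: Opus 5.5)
\textbf{The proposal is correct.} It shares the paper's skeleton: the lift through Theorem \ref{HSonZ} giving $V_\tau T\xi_\varphi=T\xi_\varphi$, the mean ergodic theorem with a weak-operator accumulation point of Ces\`aro averages, and products of adjacent transpositions in the exchangeable case. Two steps go differently.

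\emph{The $\bj_\bn$-invariance.} The paper proves $\ch_\varphi^\tau\subseteq\ch_\varphi^{\bj_\bn}$ by induction from the relation $V_NV_{N-1}=V_{N-1}^2$ between partial-shift isometries, starting at $V_0=V_\tau$. That argument applies to every $V_\tau$-fixed vector and never mentions the tail. You instead use that each $h\in\bj_\bn$ is eventually a translation by some $m$, so $V_h=V_\tau^m$ on $\overline{\mathcal{T}_\varphi\xi_\varphi}$. The cycle of inclusions then finishes the argument.

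\emph{The inclusion $\ch_\varphi^\tau\subseteq\overline{\mathcal{T}_\varphi\xi_\varphi}$.} The paper notes that the accumulation point is shift-invariant. It then needs the Olshen identification $\mathcal{R}^\tau_{\varphi_\bz}=\mathcal{T}_{\varphi_\bz}$ to place it in the tail. You place it in each $\{\pi_{\varphi_\bz}(\iota_k(\ga)):k\geq m\}''$ directly. The point you flag as delicate is already settled by your own reasoning. The finitely many Ces\`aro terms not supported beyond $m$ contribute $O(1/N)$ in norm, and these algebras are weak-operator closed, so no fallback is needed.

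\emph{What your route buys.} Your argument gives this inclusion for any stationary state. It can even be run in $\ch_\varphi$ with the isometry $V_\tau$, avoiding the bilateral extension. Spreadability is then used only for $\mathcal{T}_\varphi\xi_\varphi\subseteq\ch_\varphi^\tau$.

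\emph{A minor indexing point.} The paper's convention is $0\in\bn$, so that $\theta_0=\tau$ and $\sigma_0$ makes sense. Under that convention $U_{\sigma_1}\cdots U_{\sigma_n}$ converges to the isometry implementing $\theta_1$, not to $V_\tau$. The product should start at $\sigma_0$.
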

\begin{proof}
We start by dealing with the case of a spreadable state $\varphi$.\\
As for the inclusion $\ch_{\varphi}^{\tau}\subset \ch_{\varphi}^{\bj_\bn}$, if 
$V_Ni$ denote the isometry associated with the $N$-th partial shift $\theta_N$, we have the 
equality $V_N V_{N-1}=(V_{N-1})^2$, from which the inclusion follows as 
$V_0=V_\tau$.
The converse inclusion $\ch_{\varphi}^{\bj_\bn}\subset\ch_{\varphi}^{\tau}$ is trivial because the shift is one of the maps of $\bj_\bn$.\\
To conclude, we need to show
that $\ch_{\varphi}^{\tau}= \overline{\mathcal{T}_\varphi \xi_\varphi}$.\\
 The inclusion 
$ \overline{\mathcal{T}_\varphi \xi_\varphi}\subset \ch_{\varphi}^{\tau}$
will follow once we have proved that $\mathcal{T}_\varphi \xi_\varphi\subset \ch_{\varphi}^{\tau}$, because $\ch_{\varphi}^{\tau}$ is a closed subspace. If $T$ is in $\mathcal{T}_\varphi$, then by Theorem \ref{HSonZ} there exists a (unique) 
  $\widetilde{T}$ in $\mathcal{R}_{\varphi_\bz}^\tau$ such that $\Phi(\widetilde{T})=T$. Calculating the equality $U_\tau \widetilde{T}=\widetilde{T}U_\tau$ on $\xi_{\varphi_{\bz }}$ we find $V_\tau T\xi_\varphi=T\xi_\varphi$, {\it i.e.}
$T\xi_\varphi$ is in $ \ch_{\varphi}^{\tau}$.\\
Let us move on to prove the converse inclusion.
Let $y$ be a vector in $\ch_\varphi$ fixed by $V_\tau$. By cyclicity of $\xi_\varphi$, we can
obtain $y$ as a norm limit $y=\lim_k \pi_\varphi(x_k)\xi_\varphi$, for a suitable sequence
$\{x_k:k\in\bn\}$ of  localized elements of $\ast_\bn\ga$. 
Denote by $E_\varphi$ the orthogonal projection onto the fixed-point subspace of $V_\tau$.
We clearly have $y= E_\varphi y=\lim_k E_\varphi(\pi_\varphi(x_k)\xi_\varphi)$.\\
We now need to recall that by   von Neumann's ergodic theorem $E_\varphi=\lim_n \frac{1}{n}\sum_{k=0}^{n-1}V_\tau^k$, with the limit being understood in the strong operator topology.  But then for any $x$ in $\ast_\bn\ga$ we have:
\begin{align*}
E_\varphi(\pi_\varphi(x)\xi_\varphi)&=\lim_n \frac{1}{n}\sum_{k=0}^{n-1} V_\tau^k\pi_\varphi(x)\xi_\varphi=\lim_n \frac{1}{n}\sum_{k=0}^{n-1} (\pi_{\varphi_\bz}(\tau^k(x)))\xi_\varphi\\
&=\widetilde{T}\xi_\varphi\,
\end{align*}
where $\widetilde{T}$ is any accumulation point in the weak operator topology of the sequence $\{\frac{1}{n}\sum_{k=0}^{n-1} (\pi_{\varphi_\bz}(\tau^k(x))): n\in\bn\}$, which by construction is in  $\mathcal{R}_{\varphi_\bz}^\tau$. In particular, its restriction to $\ch_\varphi$ provides an element $T$ of the tail algebra such that $E_\varphi(\pi_\varphi(x)\xi_\varphi)= T\xi_\varphi$.\\
Finally, if $\varphi$ is also exchangeable, then $\ch_\varphi^{\bp_\bn}\subseteq\ch_\varphi^\tau$ due to
 the limit equality
$V_\tau=\lim_n U_{\s_{0}}\cdots U_{\s_n}$ (where $\sigma_i$ is the permutation that swaps $i$ and $i+1$), which can be verified as in the proof of Theorem \ref{HSonZ}. The converse inclusion $\ch_\varphi^\tau\subseteq\ch_\varphi^{\bp_\bn}$ follows from the equality
$\ch_\varphi^\tau=\overline{\mathcal{T}_\varphi \xi_\varphi}$ because the tail algebra $\mathcal{T}_\varphi$ is fixed by all permutations.
\end{proof}
For any spreadable state $\varphi$, we will henceforth denote by $E_\varphi$ the orthogonal projection onto any of the the subspaces above,
without any confusion occurring, in that they are all the same subspace by virtue of Theorem \ref{HSspreadonL2}.\\
As a first consequence of the above $L^2$ version of the Olshen Theorem, we show that the exchangeable states make up a face of the simplex of spreadable states, which is a face of the simplex of stationary states.

\begin{cor}\label{faces}
Each convex subset in the chain of inclusions $\cs^{\bp_\bn}(\ast_\bn\ga)\subset \cs^{\bj_\bn}(\ast_\bn\ga)\subset \cs^\tau(\ast_\bn\ga)$ is
a face of the convex sets in which it is contained.
\end{cor}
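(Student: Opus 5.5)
The plan is to reduce face-ness to a domination argument in the GNS space: if $\psi\le \lambda^{-1}\varphi$ for a suitable invariant $\varphi$, then $\psi$ is represented by a positive operator in the commutant, and invariance of $\varphi$ passes to $\psi$ once we know that operator is invariant. Since being a face is transitive, it suffices to show two things. First, $\cs^{\bj_\bn}(\ast_\bn\ga)$ is a face of $\cs^\tau(\ast_\bn\ga)$. Second, $\cs^{\bp_\bn}(\ast_\bn\ga)$ is a face of $\cs^\tau(\ast_\bn\ga)$; this also makes it a face of the intermediate set $\cs^{\bj_\bn}$.

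Set-up. Let $\varphi=\lambda\psi_1+(1-\lambda)\psi_2$ with $0<\lambda<1$, $\psi_1,\psi_2$ stationary, and $\varphi$ spreadable (respectively exchangeable). Since $\lambda\psi_1\le\varphi$, the Radon--Nikodym theorem for positive functionals gives a unique positive $A\in\pi_\varphi(\ast_\bn\ga)'$ with $0\le A\le 1$ and
$$\lambda\psi_1(x)=\langle \pi_\varphi(x)A\xi_\varphi,\xi_\varphi\rangle, \qquad x\in\ast_\bn\ga .$$
Put $\eta:=A^{1/2}\xi_\varphi$, so that $\lambda\psi_1(x)=\langle\pi_\varphi(x)\eta,\eta\rangle$. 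The idea is to show that $\eta$ is $V_\tau$-invariant. Theorem \ref{HSspreadonL2} then tells us that $\eta$ is fixed by all $V_h$, $h\in\bj_\bn$ (and by all $U_g$, $g\in\bp_\bn$, in the exchangeable case). From this,
$$\lambda\psi_1(\a_h(x))=\langle \pi_\varphi(\a_h(x))V_h\eta,V_h\eta\rangle .$$
One checks from the definition of $V_h$ that $V_h^*\pi_\varphi(\a_h(x))V_h=\pi_\varphi(x)$. The right-hand side therefore equals $\langle \pi_\varphi(x)\eta,\eta\rangle$, so $\psi_1$ is spreadable (respectively exchangeable), and $\psi_2$ is handled the same way.

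Invariance of $\eta$. This is the main obstacle, because $A$ lies in the commutant and $V_\tau$ is only an isometry, not a unitary implementing $\tau$. I would first try the following. Stationarity of $\psi_1$ gives
$$\langle \pi_\varphi(\tau(x))\eta,\eta\rangle=\langle\pi_\varphi(x)\eta,\eta\rangle .$$
Next I would find an $\eta'$ which is a limit of the Cesàro averages $\frac1n\sum_k V_\tau^k\eta$, i.e.\ $\eta'=E_\varphi\eta$. I would show $\eta=E_\varphi\eta$ by proving $\|E_\varphi\eta\|^2=\|\eta\|^2=\lambda$.

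For this, observe that $\|E_\varphi\eta\|^2=\lim_n\frac1n\sum_k\langle V_\tau^k\eta,\eta\rangle$. Approximate $\eta$ by $\pi_\varphi(y)\xi_\varphi$. Then $\langle V_\tau^k\eta,\eta\rangle$ is approximately $\langle\pi_\varphi(\tau^k(y))\xi_\varphi,\eta\rangle$, which in turn is approximately $\langle \pi_\varphi(y^*\tau^k(y))\xi_\varphi, A\xi_\varphi\rangle\cdot(\text{correction})$.

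A cleaner route is to work in the bilateral GNS space. There $\psi_{1,\bz}\le\lambda^{-1}\varphi_\bz$, the unitary $U_\tau$ implements an automorphism, and $\varphi_\bz$ is $\tau$-invariant. Uniqueness of the Radon--Nikodym derivative then gives $U_\tau A_\bz U_\tau^*=A_\bz$, i.e.\ $A_\bz$ commutes with $U_\tau$. Consequently $U_\tau A_\bz^{1/2}\xi_{\varphi_\bz}=A_\bz^{1/2}\xi_{\varphi_\bz}$, so $\eta_\bz$ is $U_\tau$-invariant. Theorem \ref{HSonZ} and the proof of Theorem \ref{HSspreadonL2} should then let me compress $\eta_\bz$ to $\ch_\bn$ via $E_\varphi$, or more directly rerun the argument of Theorem \ref{HSspreadonL2} in $\ch_{\varphi_\bz}$. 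This shows that $\eta_\bz$ is fixed by the spreading isometries of $\bz$ (respectively the permutation unitaries $U_\s$, $\s\in\bp_\bz$). From there the computation above gives invariance of $\psi_{1,\bz}$ and, by restriction, of $\psi_1$.

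I expect the delicate point to be justifying that the $\bz$-version of the identity $\ch^\tau=\ch^{\bj}$ holds. I would prove it by copying the partial-shift relation $V_NV_{N-1}=V_{N-1}^2$ argument used in Theorem \ref{HSspreadonL2}, with spreadability of $\varphi_\bz$ inherited from that of $\varphi$ by the lemma on bilateral extensions.
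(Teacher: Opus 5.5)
Your skeleton (domination, a Radon--Nikodym derivative $A\in\pi_\varphi(\ast_\bn\ga)'$, an invariant vector, then Theorem \ref{HSspreadonL2}) is the right one. The closing identity $V_h^*\pi_\varphi(\a_h(x))V_h=\pi_\varphi(x)$ is also correct. However, the step everything rests on, the invariance of the representing vector, is never established.

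\textbf{Where the proposal breaks.} Choosing the quadratic representative $\eta=A^{1/2}\xi_\varphi$ cuts the direct link with stationarity. That $\langle\pi_\varphi(\cdot)\eta,\eta\rangle$ is $\tau$-invariant says nothing by itself about $\eta$, and your Ces\`aro/norm computation is left unfinished.

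The bilateral detour does correctly give $U_\tau A_\bz U_\tau^*=A_\bz$, hence $U_\tau\eta_\bz=\eta_\bz$. But the transfer back fails as you propose it:
\begin{itemize}
\item On $\bz$ every partial shift $\theta_N$ fixes $(-\infty,N-1]$, so none of them is $\tau$. The relation $V_NV_{N-1}=V_{N-1}^2$ only propagates invariance from $\theta_{N-1}$ to $\theta_N$. The induction therefore has no base case, and $U_\tau$-invariance never enters it.
\item ``Compressing $\eta_\bz$ to $\ch_\bn$ via $E_\varphi$'' presupposes $\eta_\bz\in\ch_\bn$, which is exactly what is unproved. Projecting would change the represented functional.
\end{itemize}
To close this route you would need extra input. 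For instance, $V_{\theta_N}=U_\tau^NV_{\theta_0}U_\tau^{-N}$ together with $V_{\theta_N}\to U_\tau$ strongly as $N\to-\infty$ gives $\|V_{\theta_0}v-v\|=\|V_{\theta_N}v-v\|\to 0$ for $U_\tau$-fixed $v$. In the exchangeable case you would also need something like $\s_i\theta_i=\theta_{i+1}$ to pass to permutations.

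\textbf{The missing idea.} Use the \emph{linear} representation instead, which makes all of this unnecessary. Since $A$ commutes with $\pi_\varphi(\ast_\bn\ga)$, you have $\lambda\psi_1(x)=\langle\pi_\varphi(x)\xi_\varphi,A\xi_\varphi\rangle$.
\begin{itemize}
\item Stationarity gives $\langle\pi_\varphi(x)\xi_\varphi,A\xi_\varphi\rangle=\langle V_\tau\pi_\varphi(x)\xi_\varphi,A\xi_\varphi\rangle$ for all $x$. By cyclicity, $V_\tau^*A\xi_\varphi=A\xi_\varphi$.
\item For an isometry, $V^*y=y$ forces $Vy=y$, since $\|Vy-y\|^2=2\|y\|^2-2{\rm Re}\langle y,V^*y\rangle$.
\item Theorem \ref{HSspreadonL2} then puts $A\xi_\varphi$ in $\ch_\varphi^{\bj_\bn}$ (resp.\ $\ch_\varphi^{\bp_\bn}$).
\item Finally, $\lambda\psi_1(\a_h(x))=\langle\pi_\varphi(x)\xi_\varphi,V_h^*A\xi_\varphi\rangle=\lambda\psi_1(x)$, using $V_h^*A\xi_\varphi=V_h^*V_hA\xi_\varphi=A\xi_\varphi$.
\end{itemize}
This is the paper's proof, carried out entirely inside $\ch_\varphi$ with no bilateral extension. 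If you want to keep $A^{1/2}\xi_\varphi$, you can recover it from here. The argument in the proof of Theorem \ref{ergohierspread} shows $AV_\tau=V_\tau A$, hence $AV_\tau^*=V_\tau^*A$. So $A^{1/2}$ commutes with $V_\tau$, and $A^{1/2}\xi_\varphi$ is fixed.
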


\begin{proof}
We will only prove that $\cs^{\bp_\bn}(\ast_\bn\ga)$ is a face of $\cs^\tau(\ast_\bn\ga)$. for the other cases can be handled in the exact same way.\\
Let $\varphi$ be an exchangeable state that is obtained as a non-trivial convex combination
$\varphi=t\om+(1-t)\eta$, $t\in (0, 1)$, of two stationary states $\om, \eta$. In particular, $\om$ is dominated by $\varphi$, and therefore 
there exists a positive operator $S$ in  $\pi_\varphi(\ast_\bn\ga)'$ such that
$\om(x)=\langle \pi_\varphi(x)S\xi_\varphi, \xi_\varphi\rangle$ for all $x$ in $\ast_\bn\ga$.\\
We claim that $S\xi_\varphi$ is in $\ch_{\varphi}^\tau$. If this is the case, then by Theorem \ref{HSspreadonL2} 
$S\xi_\varphi$ lies in $\ch_{\varphi}^{\bp_\bn}$ as well, meaning that $\om$ (and $\eta$) is exchangeable.\\
All we have to do to conclude is prove the claim.
For all $x$ in $\ast_\bn\ga$ we have
\begin{align*}
\langle \pi_\varphi(x)\xi_\varphi, S\xi_\varphi \rangle&=\om(x)=\om(\tau(x))=\langle \pi_\varphi(\tau(x))\xi_\varphi, S\xi_\varphi \rangle\\
&=\langle V_\tau\pi_\varphi(x)\xi_\varphi, S\xi_\varphi \rangle=\langle \pi_\varphi(x)\xi_\varphi, V_h^*S\xi_\varphi \rangle\,,
\end{align*}
which, by cyclicity of the GNS  vector implies $V_h^*S\xi_\varphi=S\xi_\varphi$. The conclusion now follows from this general fact: for any isometry $V$ on a Hilbert space $V^*y=y$ implies $Vy=y$.
\end{proof}

\section{Existence of the conditional expectation onto the tail algebra}\label{conexsec}

Examples of exchangeable states $\varphi$ on $\ast_\bn\ga$ are known for which  there is no invariant normal conditional expectation
from $\pi_\varphi(\ast_\bn\ga)''$ onto the tail algebra. For instance, such a situation occurs in the setting of Example \ref{noshift}, 
as a consequence of the fact that the shift cannot be implemented as a normal $*$-endomorphism of the von Neumann algebra generated in the GNS representation of the state. However, this difficulty can be overcome by
working in the GNS representation of the extended state $\varphi_\bz$, where the shift is implemented as the adjoint action of a unitary.
Importantly, the conditional expectation obtained from the shift is natural because the tail algebra defined on the larger Hilbert space
is $*$-isomorphic to the tail algebra in the $*$-isomorphism induced by the restriction map by Theorem \ref{HSonZ}.\\
Conditional expectations of this type were first constructed by K\"{o}stler in \cite{K} under the additional assumption that the invariant state is faithful  on the von Neumann algebra generated by the variables  ({\it i.e.} in our setting  the state has central support). 
Liu later removed the faithfulness assumption  in the $W^*$-setting  for processes indexed by $\bz$ \cite{LiuBM}.
Because our setting is slightly different, and the conditional expectation is crucial in our analysis, we include a complete self-contained proof of its existence.

\begin{thm}\label{condexpgen}
For any spreadable state  $\varphi$  on $\ast_\bn\ga$ there exists
a unique normal shift-invariant
conditional expectation $E_\tau:\mathcal{R}_{\varphi_{\bz }}\rightarrow\mathcal{T}_{\varphi_{\bz }}$, which is also
$\varphi$-invariant, {\it i.e.} $\langle E_\tau(T)\xi_{\varphi_\bz}, \xi_{\varphi_\bz} \rangle= \langle T\xi_{\varphi_\bz}, \xi_{\varphi_\bz} \rangle$
for all $T$ in $\mathcal{R}_{\varphi_{\bz }}$.\\
Moreover, for $T= \pi_{\varphi_{\bz }}(a)$, with $a$ being a localized element of $\ast_\bn\ga$, one has 
$$E_\tau(T)=\lim_{n\rightarrow\infty}\pi_{\varphi_{\bn }}(\tau^n(a))$$ in the weak operator topology of 
$\cb(\ch_{\varphi_{\bz }})$.
 $E_\tau$ is $\bj_\bn$-invariant, that is  for all $h$ in $\bj_\bn$ one has $E_\tau(\pi_{\varphi_\bz}(\a_h(a)))= E_\tau(\pi_{\varphi_\bz}(a))$,
$a$ in $\ast_\bn\ga$.\\
In addition, if $\varphi$ is also exchangeable $E_\tau$ is permutation invariant. 
\end{thm}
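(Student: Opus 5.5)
The plan is to build $E_\tau$ by the standard construction in the bilateral GNS representation $\pi_{\varphi_\bz}$, where the shift is implemented by the unitary $U_\tau$. We first take $E_\tau$ to be the point-weak$^*$ limit of the ergodic averages $\frac1n\sum_{k=0}^{n-1}{\rm ad}\,U_\tau^k$, and then show that on localized elements these averages actually converge, not only in mean.

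\textbf{Step 1 (convergence on localized elements).} Let $a$ be localized in $\ast_\bn\ga$ and let $x,y$ be localized in $\ast_\bz\ga$. Consider
\[
\langle \pi_{\varphi_\bz}(\tau^n(a))\pi_{\varphi_\bz}(x)\xi_{\varphi_\bz},\pi_{\varphi_\bz}(y)\xi_{\varphi_\bz}\rangle=\varphi_\bz(y^*\tau^n(a)x).
\]
Once $n$ is large enough that the support of $\tau^n(a)$ lies to the right of the supports of $x$ and $y$, spreadability says this quantity does not depend on $n$. Indeed, $\varphi_\bz$ is invariant under increasing maps of $\bz$: such a map can be brought into $\bn$ by shifting, and $\varphi_\bz$ is shift-invariant. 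An increasing map that fixes the supports of $x,y$ and moves the support of $\tau^n(a)$ to that of $\tau^{n+1}(a)$ then gives the claim. Since $\|\pi_{\varphi_\bz}(\tau^n(a))\|\le\|a\|$ and the vectors $\pi_{\varphi_\bz}(x)\xi_{\varphi_\bz}$ span a dense subspace, the sequence converges in the weak operator topology to an operator $E_\tau(\pi_{\varphi_\bz}(a))$.

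The limit lies in $\bigcap_m\{\pi_{\varphi_\bz}(\iota_k(\ga)):k\ge m\}''=\mathcal{T}_{\varphi_\bz}$, since its tail terms do. It is shift-invariant by construction, which recovers the inclusion of $\mathcal{T}_{\varphi_\bz}$ in $\mathcal{R}^\tau_{\varphi_\bz}$ from Theorem \ref{HSonZ}. The same argument shows $\bj_\bn$-invariance: for $h\in\bj_\bn$, the elements $\tau^n(\a_h(a))$ and $\tau^n(a)$ have supports far to the right, and they are related by an increasing map that fixes the supports of $x,y$. Hence $E_\tau(\pi_{\varphi_\bz}(\a_h(a)))=E_\tau(\pi_{\varphi_\bz}(a))$. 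If $\varphi$ is exchangeable, the same argument with permutations yields permutation invariance.

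\textbf{Step 2 (extension to a normal conditional expectation).} For $T\in\mathcal{R}_{\varphi_\bz}$, the averages $M_n(T)=\frac1n\sum_{k<n}U_\tau^kTU_\tau^{-k}$ are bounded. By Step 1, on $\pi_{\varphi_\bz}(a)$ with $a$ localized they converge weakly to $E_\tau(\pi_{\varphi_\bz}(a))$, since Cesàro means of a convergent sequence converge to the same limit.

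\emph{This is the main obstacle:} extending convergence from the norm-dense $*$-algebra to the whole von Neumann algebra and obtaining normality. Kaplansky density alone does not give this, since the limits need not commute with strong limits. The plan is to exploit the $L^2$ structure:
\begin{itemize}
\item The mean ergodic theorem for $U_\tau$ gives $M_n(T)\xi\to P\,T\xi$ strongly, where $\xi=\pi_{\varphi_\bz}(x)\xi_{\varphi_\bz}$ and $P$ is the projection onto $U_\tau$-fixed vectors, provided $U_\tau^{-k}\xi$ is also fixed. That fails in general, so we need something sharper.
\item Instead, we control matrix coefficients. For $\xi,\eta$ localized vectors and $\varepsilon>0$, pick $a$ localized with $\|(T-\pi_{\varphi_\bz}(a))\zeta\|$ small for the finitely many vectors $\zeta=U_\tau^{-k}\xi$ needed. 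This is not uniform in $k$.
\item A better route is through states. Every element of the predual that is a finite sum of vector functionals $\omega_{\xi,\eta}$ is asymptotically $\tau$-invariant in the appropriate sense, by Step 1. The functionals $\rho\mapsto\lim_n\rho\circ M_n$ define a map on the predual. Showing it is norm-bounded and preserves normality on a dense set, then dualizing, produces a normal map $E_\tau$ that agrees with Step 1 on localized elements.
\end{itemize}

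Concretely, for fixed localized $x,y$ the functional $T\mapsto\langle E_\tau(T)\pi(x)\xi,\pi(y)\xi\rangle$ will be identified with $\langle T\,\pi(\tau^{n}(x'))\ldots\rangle$-type expressions. One uses spreadability to move $x,y$ to the far left instead of moving $a$ to the far right: this equals $\lim_n\varphi_\bz(\tau^{-n}(y)^*\,a\,\tau^{-n}(x))$. It is therefore a weak limit of vector functionals applied to $T$, and these vector functionals are $U_\tau^{-n}\pi(x)\xi_{\varphi_\bz}$, $U_\tau^{-n}\pi(y)\xi_{\varphi_\bz}$. Taking weak limits of the rank-one operators $|U_\tau^{-n}\pi(x)\xi\rangle\langle U_\tau^{-n}\pi(y)\xi|$ along the ergodic projection should exhibit these functionals as normal. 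This gives normality of $E_\tau$ and its definition on all of $\mathcal{R}_{\varphi_\bz}$ by density.

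\textbf{Step 3 (properties and uniqueness).} On the dense algebra, $E_\tau$ is unital, completely positive, and idempotent onto $\mathcal{T}_{\varphi_\bz}$, since tail elements are $\tau$-fixed by Theorem \ref{HSonZ}. By normality these properties pass to all of $\mathcal{R}_{\varphi_\bz}$; in particular $E_\tau$ maps onto $\mathcal{T}_{\varphi_\bz}$.

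For the bimodule property, take $S,S'\in\mathcal{T}_{\varphi_\bz}$. Then $M_n(STS')=S\,M_n(T)\,S'$, because $S$ and $S'$ commute with $U_\tau$. Passing to the limit gives $E_\tau(STS')=SE_\tau(T)S'$.

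$\varphi$-invariance follows from $\langle U_\tau^kTU_\tau^{-k}\xi_{\varphi_\bz},\xi_{\varphi_\bz}\rangle=\langle T\xi_{\varphi_\bz},\xi_{\varphi_\bz}\rangle$.

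For uniqueness, let $E'$ be another normal shift-invariant conditional expectation onto the tail. Then $E'(\pi(a))=E'(\pi(\tau^n(a)))$ for all $n$. By normality, $E'(\pi(\tau^n(a)))$ converges to $E'(E_\tau(\pi(a)))=E_\tau(\pi(a))$, using Step 1 and the fact that $E'$ fixes the tail. Hence $E'=E_\tau$ on a weakly dense set, and by normality everywhere.
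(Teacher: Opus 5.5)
Your Step 1 is the paper's opening argument. Your Step 3 derives the bimodule property from the commutation of tail elements with $U_\tau$, $\varphi$-invariance from $U_\tau\xi_{\varphi_\bz}=\xi_{\varphi_\bz}$, and uniqueness as in the paper. That is a valid alternative to the paper's appeal to Tomiyama's theorem, \emph{provided} $E_\tau$ is already known to be the pointwise weak limit of ${\rm ad}\,U_\tau^k$ (or of your averages $M_n$) on all of $\mathcal{R}_{\varphi_\bz}$, and to be normal.

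Step 2 was supposed to supply exactly this, and it does not: the mechanism you settle on does not work. Write $\pi=\pi_{\varphi_\bz}$ and $\xi=\xi_{\varphi_\bz}$. A weak$^*$ limit of vector functionals need not be normal. Weak-operator limits, or Ces\`aro averages, of the rank-one operators $|U_\tau^{-n}\pi(x)\xi\rangle\langle U_\tau^{-n}\pi(y)\xi|$ can lose mass. For instance, when the vectors $U_\tau^{-n}\pi(x)\xi$ tend weakly to $0$, these operators tend to $0$, while the functionals they induce on $\mathcal{R}_{\varphi_\bz}$ do not. So ``taking weak limits along the ergodic projection'' neither exhibits $T\mapsto\langle E_\tau(T)\pi(x)\xi,\pi(y)\xi\rangle$ as a normal functional nor shows that $M_n(T)$ converges for a general $T\in\mathcal{R}_{\varphi_\bz}$. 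Step 3 therefore has nothing to act on.

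The missing idea is that, once you move $x,y$ to the left, no limit is needed, and the threshold is uniform in $a$. Choose $k_0$ so that $\tau^{-k_0}(x)$ and $\tau^{-k_0}(y)$ are supported in the negative integers. For each $k\ge k_0$, use the increasing map of $\bz$ that is the identity on $\bn$ and translates the negative integers by $-(k-k_0)$. Spreadability of $\varphi_\bz$ then gives
\[
\varphi_\bz(\tau^{-k}(y)^*a\,\tau^{-k}(x))=\varphi_\bz(\tau^{-k_0}(y)^*a\,\tau^{-k_0}(x))\quad\text{for all } a\in\ast_\bn\ga \text{ simultaneously}.
\]
Hence the vector functionals $T\mapsto\langle T\,U_\tau^{-k}\pi(x)\xi,U_\tau^{-k}\pi(y)\xi\rangle$ and $T\mapsto\langle T\,U_\tau^{-k_0}\pi(x)\xi,U_\tau^{-k_0}\pi(y)\xi\rangle$ agree on the weakly dense algebra $\pi(\ast_\bn\ga)$. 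Being normal, they agree on all of $\mathcal{R}_{\varphi_\bz}$.

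Consequently, for every $T\in\mathcal{R}_{\varphi_\bz}$, the coefficient $\langle U_\tau^{k}TU_\tau^{-k}\pi(x)\xi,\pi(y)\xi\rangle$ is constant for $k\ge k_0$. So the sequence ${\rm ad}\,U_\tau^k(T)$ itself converges weakly, with no Ces\`aro averaging needed. Its limit $E_\tau(T)$ satisfies $\langle E_\tau(T)\pi(x)\xi,\pi(y)\xi\rangle=\langle T\,\pi(\tau^{-k_0}(x))\xi,\pi(\tau^{-k_0}(y))\xi\rangle$, which gives weak-operator continuity on bounded sets, i.e.\ normality. This is precisely the paper's identity \eqref{condexchange}, on which its well-posedness \eqref{wellposedness} and extension arguments rest. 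With it in hand, your Step 3 goes through.

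Two minor points. Shift-invariance of the limits $E_\tau(\pi(a))$ does not by itself recover $\mathcal{T}_{\varphi_\bz}\subseteq\mathcal{R}^\tau_{\varphi_\bz}$; you need Theorem \ref{HSonZ} for that. Idempotence should be argued from $M_n(S)=S$ for shift-invariant $S$, since the dense algebra does not contain the tail algebra.
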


\begin{proof}
We start by establishing the existence of the stated limit
for $T= \pi_{\varphi_\bz}(a)$, for some localized $a$ in $\ast_\bn\ga$.
If  $b, c$ are localized elements in $\ast_\bz\ga$, one has
\begin{align*}
\langle \pi_{\varphi_\bz}(\tau^n (a)) \pi_{\varphi_\bz}(b)\xi_{\varphi_\bz}, \pi_{\varphi_\bz}(c)\xi_{\varphi_\bz}\rangle 
=&\varphi_\bz(c^*\tau^n(a)b)=\varphi_\bz(c^*\tau^{n+1}(a)b)\\
=&\langle \pi_{\varphi_\bz}(\tau^{n+1} (a)) \pi_{\varphi_\bz}(b)\xi_{\varphi_\bz}, \pi_{\varphi_\bz}(c)\xi_{\varphi_\bz}\rangle 
\end{align*}
for all $n$ sufficiently  large. Indeed, as soon as $n$ is so large that the support of $\tau^n(a)$ lies entirely to the right of the supports of both $b$ and $c$, there exists a monotone map that does not displace the supports of $b$ and $c$ and shifts by $1$ that of $\tau^n(a)$.
This shows that for any $x, y$ in the dense subspace $\pi_{\varphi_\bz}((\ast_\bz\ga)_{\rm {alg}})\xi_{\varphi_\bz}$, the limit
$\lim_n \langle \pi_{\varphi_\bz}(\tau^n (a))x, y\rangle $ exists and defines a bounded sequilinear form (whose norm is bounded by $\|a\|$). By boundedness, this form can be (uniquely) extended to a sequilinear form on the whole $\ch_{\varphi_\bz}$. By the Riesz representation theorem, this  form is associated with a unique bounded operator,
$E_\tau(\pi_{\varphi_\bz}(a))$, which by construction is the limit in the weak operator topology of the sequence
$\{\pi_{\varphi_\bz}(\tau^n (a)): n\in\bn\}$. By linearity the map $E_\tau$ is thus defined on the subalgebra $\pi_{\varphi_\bz}((\ast_\bz\ga)_{\rm {alg}})$.\\
We next note that for all localized $a$ in $\ast_\bn\ga$ and $b, c$ in $\ast_\bz\ga$ the equality
\begin{equation}\label{condexchange}
 \langle E_\tau(\pi_{\varphi_\bz}(a)) \pi_{\varphi_\bz}(b)\xi_{\varphi_\bz}, \pi_{\varphi_\bz}(c)\xi_{\varphi_\bz}\rangle
=\langle \pi_{\varphi_\bz}(a) \pi_{\varphi_\bz}(\tau^k(b))\xi_{\varphi_\bz}, \pi_{\varphi_\bz}(\tau^k(c))\xi_{\varphi_\bz}\rangle
\end{equation}
holds for every $k<0$ such that the supports of $\tau^k(b)$ and $\tau^k(c)$ lies to the left of $0$.\\
Indeed, since $E_\tau\circ\tau=\tau\circ E_\tau$, for all integers $k$ we have
$$\langle E_\tau(\pi_{\varphi_\bz}(a)) \pi_{\varphi_\bz}(b)\xi_{\varphi_\bz}, \pi_{\varphi_\bz}(c)\xi_{\varphi_\bz}\rangle=
\langle E_\tau(\pi_{\varphi_\bz}(a)) \pi_{\varphi_\bz}(\tau^{k}(b))\xi_{\varphi_\bz}, \pi_{\varphi_\bz}(\tau^k(c))\xi_{\varphi_\bz}\rangle$$
and the right-hand side ot the above equality can be expressed as
\begin{align*}
\lim_n \langle \pi_{\varphi_\bz}(\tau^n(a)) \pi_{\varphi_\bz}(\tau^{k}(b))\xi_{\varphi_\bz}, \pi_{\varphi_\bz}(\tau^k(c))\xi_{\varphi_\bz}\rangle=
 \langle \pi_{\varphi_\bz}(a) \pi_{\varphi_\bz}(\tau^{k}(b))\xi_{\varphi_\bz}, \pi_{\varphi_\bz}(\tau^k(c))\xi_{\varphi_\bz}\rangle
\end{align*}
thanks to $\bj_\bz$-invariance of $\varphi_\bz$ (for any fixed $n$, consider a permutation that fixes negative integers and shifts by $n$ the support of $a$).\\
If now $\{T_\a: \a \in I\}$ is  a bounded net of localized elements of $\pi_{\varphi_\bz}(\ast_\bz\ga)$ that converges to $0$ in the weak operator topology, we have that
\begin{equation}\label{wellposedness}
\lim_\a E_\tau(T_\a)=0
\end{equation}
 as well. 
Indeed, for all localized $b, c$ in $\ast_\bz\ga$, we have
\begin{align*}
&\lim_\a \langle  E_\tau(T_\a)\pi_{\varphi_\bz}(b)\xi_{\varphi_\bz}, \pi_{\varphi_\bz}(c)\xi_{\varphi_\bz}\rangle =\lim_\a \langle  T_\a (\pi_{\varphi_\bz}(\tau^k(b))\xi_{\varphi_\bz}, (\pi_{\varphi_\bz}(\tau^k(c))\xi_{\varphi_\bz}\rangle\\
&=0\,,
\end{align*}
for a suitable $k<0$, which gives the desired convergence by boundedness of the net $\{T_\a: \a \in I\}$.\\
We are now in a position to prove that the map $E_\tau$ can be extended to the whole von Neumann algebra $\mathcal{R}_{\varphi_\bz}$.
Let $T$ be an element of  $\mathcal{R}_{\varphi_\bz}$ and let $\{T_\a: \a \in I\}$ be a bounded net converging to $T$ in the weak operator topology.
A straightforward application of \eqref{condexchange} gives that $\{ E_\tau(T_\a): \a \in I\}$ is a (bounded) Cauchy net in the weak operator topology, and therefore it converges o a bounded
operator, which we may safely denote by $ E_\tau(T)$, in that it does not depend on the choice of the approximating net by \eqref{wellposedness}.\\
Equation \eqref{condexchange} continues to hold for $ E_\tau(T)$ for $T$ in the von Neumann algebra. Indeed, for all localized $b, c$ in $\ast_\bz\ga$, we have
\begin{align*}
&\langle E_\tau(T) \pi_{\varphi_\bz}(b)\xi_{\varphi_\bz}, \pi_{\varphi_\bz}(c)\xi_{\varphi_\bz}\rangle 
=\lim_\a \langle E_\tau(T_\a) \pi_{\varphi_\bz}(b)\xi_{\varphi_\bz}, \pi_{\varphi_\bz}(c)\xi_{\varphi_\bz}\rangle=\\
&\lim_\a \langle T_\a \pi_{\varphi_\bz}(\tau^k(b))\xi_{\varphi_\bz}, (\pi_{\varphi_\bz}(\tau^k(c))\xi_{\varphi_\bz}\rangle=\langle T \pi_{\varphi_\bz}(\tau^k(b))\xi_{\varphi_\bz}, (\pi_{\varphi_\bz}(\tau^k(c))\xi_{\varphi_\bz}\rangle\, .
\end{align*}
From this it easily follows that the restriction of the map $E_\tau$ to the unit ball of  $\mathcal{R}_{\varphi_\bz}$ is continuous in the strong/weak operator topology.\\
Note that the map $E_\tau$ is shift invariant, {\it i.e. } $E_\tau\circ\tau=\tau\circ E_\tau=E_\tau$.
This is in fact a straightforward consequence of the weak continuiy of $E_\tau$ and the fact that the equality
 holds on the dense $*$-subalgebra spanned by elements of the form $\pi_{\varphi_\bz}(a)$, with
$a$ in $\ast_\bn\ga$ localized, as we have already remarked. In particular, it follows that the range of $E_\tau$ is
contained in $\mathcal{T}_{\varphi_\bz}$.
We next show that the range of $E_\tau$ is the whole $\mathcal{T}_{\varphi_\bz}^\tau$ (that is $\mathcal{R}_{\varphi_\bz}^\tau$ by
Theorem \ref{HSonZ})
as a consequence of the fact that $T$ in $\mathcal{R}_{\varphi_\bz}^\tau$
implies $E_\tau(T)=T$.\\
Let $T$ be a shift-invariant element of $\mathcal{R}_{\varphi_\bz}$
and let $\{t_\a:\a \in I\}$ a  net in the dense subalgebra of localized elements of $\ast_\bn\ga$ such that
$\lim_\a \pi_{\varphi_\bz}(t_\a)=T$ in the weak operator topology. Suppose
$b, c$ are localized elements in $\ast_\bz\ga$ with supports contained in
$\{k\in\bz: k\leq N\}$. Since $T$ is shift-invariant, we may safely assume
that the support of each $t_\a$ lies to the right of $N$ (for if this were not the case, we could consider
the shifted net $\{\tau^N(t_\a):\a \in I\}$, which would still approximate $T$). But then we have

\begin{align*}
&\langle E_\tau(T) \pi_{\varphi_\bz}(b)\xi_{\varphi_\bz}, \pi_{\varphi_\bz}(c)\xi_{\varphi_\bz}\rangle =
\lim_\a \langle E_\tau(\pi_{\varphi_\bz}(t_\a)) \pi_{\varphi_\bz}(b)\xi_{\varphi_\bz}, \pi_{\varphi_\bz}(c)\xi_{\varphi_\bz}\rangle=\\
&\lim_\a \lim_n \langle (\pi_{\varphi_\bz}(\tau^n(t_\a)) \pi_{\varphi_\bz}(b)\xi_{\varphi_\bz}, \pi_{\varphi_\bz}(c)\xi_{\varphi_\bz}\rangle=\lim_\a\lim_n \varphi_\bz(c^*\tau^n(t_\a)b)\, .
\end{align*}
Now for any fixed $n$ and $\a$ there exists a monotone map that does not move the supports of $b, c$ and shifts by $n$ that of $t_\alpha$. By $\bj_\bz$-invariance of $\varphi_\bz$, we then have
$\varphi_\bz(c^*\tau^n(t_\a)b)=\varphi_\bz(c^*t_\a b)$. Going back to the previous computation, we find
\begin{align*}
&\langle E_\tau(T) \pi_{\varphi_\bz}(b)\xi_{\varphi_\bz}, \pi_{\varphi_\bz}(c)\xi_{\varphi_\bz}\rangle =
\lim_\a \varphi_\bz(c^*t_\a b)=\langle T \pi_{\varphi_\bz}(b)\xi_{\varphi_\bz}, \pi_{\varphi_\bz}(c)\xi_{\varphi_\bz}\rangle\, ,
 \end{align*}
hence $E_\tau(T)=T$ by cyclicity of the GNS vector.\\
In order to conclude that $E_\tau$ is  a conditional expectation, it is enough to make sure that 
$E_\tau$ is a contraction by Tomiyama's lemma see \cite[Theorem 1.5.10]{BO}. Note that $E_\tau$ is certainly contractive on
the dense subalgebra $\pi_{\varphi_\bz}((\ast_\bn\ga)_{\rm alg})$ because there $E_\tau$ is simply given as the limit of a sequence of $*$-endomorphisms. The general case is achieved by density and lower
semicontinuity of the operator norm, for if $\{T_\a: \a\in I\}$ is an approximating net of $T$ in
$\mathcal{R}_{\varphi_\bz}^\tau$  with $\|T_\a\|\leq \|T\|$ for all $\a$ (such a net exists by virtue of the Kaplansky density theorem) we have
\begin{align*}
\|E_\tau(T)\|\leq\limsup_\a \|E_\tau(T_\a)\|\leq \limsup_\a \|T_\a\|\leq \|T\|\, .
\end{align*}

As for the uniqueness of $ E_\tau$, suppose $F$ is a normal shift-invariant
conditional expectation onto $\mathcal{R}_{\varphi_\bz}^\tau$. The following computation shows that $F$  equals
$ E_\tau$ on all elements of the form $\pi_{\varphi_\bz}(a)$, with $a$ being a localized element of $\ast_\bn\ga$:
\begin{align*}
F(\pi_{\varphi_\bz}(a))&=F(\tau^n(\pi_{\varphi_\bz}(a)))=\lim_n F(\tau^n(\pi_{\varphi_\bz}(a)))=
F(\lim_n \tau^n(\pi_{\varphi_\bz}(a)))\\
&=F(E_\tau(\pi_{\varphi_\bz}(a)))=E_\tau(\pi_{\varphi_\bz}(a))\, .
\end{align*}
By normality this in turn  entails that $F=E_\tau$,  as the elements above span a weakly dense
$*$-subalgebra of $\mathcal{R}_{\varphi_\bz}$.\\
We  are left with the task of verifying that $ E_\tau$ is $\bj_\bn$ invariant.
Let $h$ be a fixed element of $\bj_\bn$.
By norm density it suffices to verify the desired equality on elements
of the form  $\pi_{\varphi_\bz}(a)$ with $a=i_{j_1}(a_1)\cdots i_{j_l}(a_l)$,
To this end, let $b, c$ localized elements of $\ast_\bz\ga$. We have
\begin{align*}
\langle E_\tau ((\pi_{\varphi_\bz}(\a_h( a)))\pi_{\varphi_\bz}(b)\xi_{\varphi_\bz}, \pi_{\varphi_\bz}(c)\xi_{\varphi_\bz}\rangle=\lim_n \varphi_\bz(c^*\tau^n(\a_h(a))b)\,.
\end{align*}
Now, as soon as $n$ is large enough, there exists a monotone map $g$ that is  the identity
on the supports of both $b$ and $c$ and sends  $j_k+n$  to $\a_h(j_k)+n$ for all $k=1, \ldots, l$.
By $\bj_\bz$-invariance of $\varphi_\bz$ we  eventually have
$\varphi_\bz(c^*\tau^n(\a_h(a))b)=\varphi_\bz(c^*\tau^n(a)b)$. But then:
\begin{align*}
&\langle E_\tau ((\pi_{\varphi_\bz}( \a_h(a)))\pi_{\varphi_\bz}(b)\xi_{\varphi_\bz}, \pi_{\varphi_\bz}(c)\xi_{\varphi_\bz}\rangle=\lim_n \varphi_\bz(c^*\tau^n(a)b)\\
=&\lim_n\langle (\pi_{\varphi_\bz}(\tau^n(a)))\pi_{\varphi_\bz}(b)\xi_{\varphi_\bz}, \pi_{\varphi_\bz}(c)\xi_{\varphi_\bz}\rangle=
\langle E_\tau (\pi_{\varphi_\bz}( a))\pi_{\varphi_\bz}(b)\xi_{\varphi_\bz}, \pi_{\varphi_\bz}(c)\xi_{\varphi_\bz}\rangle\,.
\end{align*}
hence $E_\tau ((\pi_{\varphi_\bz}(\a_h( a)))=E_\tau (\pi_{\varphi_\bz}( a))$ by cyclicity of the GNS vector.\\
Finally, verifying that $E_\tau$ is also permutation-invariant when $\varphi$ is exchangeable can be done in the same way as above.
\end{proof}
If $\mathcal{R}$ is a von Neumann algebra, we denote by $\cs_*(\mathcal{R})$ the convex set of all normal states of $\mathcal{R}$.
In the sequel, $\mathcal{T}_{\varphi_\bz}=\mathcal{R}_{\varphi_\bz}^\tau$ and $\mathcal{T}_\varphi$ are identified via the restriction isomorphism. We define $\cs_{\pi_{\varphi_\bz}}$ to be the convex set of all states of $\ast_\bn\ga$ which are normal in the representation $\pi_{\varphi_\bz}$.
As a consequence of the existence of the conditional expectation onto the tail algebra, we find that the tail algebra itself encodes  the face 
$\cs_{\pi_{\varphi_\bz}}^\tau:=\{\eta\in\cs_{\pi_{\varphi_\bz}}: \eta\circ\tau=\eta \,\,\}$ of distributions of all
 stationary processes normal in the GNS of the given extended process to $\bz$. 

\begin{cor}\label{normalinvspread}
Let $\varphi$ be a spreadable (exchangeable) state on $\ast_\bn\ga$. The map
$$\cs_*(\mathcal{T}_\varphi)\ni\om\mapsto \om\circ E_\tau\circ\pi_{\varphi_\bz}\in \cs_{\pi_{\varphi_\bz}}^\tau$$
is an affine homeomorphism. In particular,  the states in  $\cs^\tau_{\pi_{\varphi_\bz}}$ are automatically spreadable (exchangeable).
\end{cor}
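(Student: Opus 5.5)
The plan is to verify that the map $\Psi(\om):=\om\circ E_\tau\circ\pi_{\varphi_\bz}$ is well defined with values in $\cs^\tau_{\pi_{\varphi_\bz}}$, and then to exhibit an explicit inverse.

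\emph{Well-definedness.} Given $\om\in\cs_*(\mathcal{T}_\varphi)$, I transport it to a normal state on $\mathcal{T}_{\varphi_\bz}$ through the restriction isomorphism $\Phi$ of Theorem \ref{HSonZ}. By Theorem \ref{condexpgen}, $E_\tau$ is normal, unital and completely positive, so $\om\circ E_\tau$ is a normal state on $\mathcal{R}_{\varphi_\bz}$. Hence $\Psi(\om)$ is a state on $\ast_\bn\ga$ that is normal in $\pi_{\varphi_\bz}$. Since $E_\tau\circ\tau=E_\tau$ and $E_\tau$ is $\bj_\bn$-invariant (and permutation invariant in the exchangeable case), $\Psi(\om)$ is stationary, and in fact spreadable (respectively exchangeable). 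Affinity of $\Psi$ is obvious.

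\emph{Inverse.} Take $\eta\in\cs^\tau_{\pi_{\varphi_\bz}}$ and let $\tilde\eta$ be its unique normal extension to $\mathcal{R}_{\varphi_\bz}$. The key step, and the main obstacle, is to show that
$$\tilde\eta=\tilde\eta\upharpoonright_{\mathcal{T}_{\varphi_\bz}}\circ E_\tau .$$

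\begin{itemize}
\item For localized $a\in\ast_\bn\ga$, stationarity of $\eta$ gives $\tilde\eta(\pi_{\varphi_\bz}(a))=\eta(\tau^n(a))=\tilde\eta(\pi_{\varphi_\bz}(\tau^n(a)))$ for every $n$.
\item By Theorem \ref{condexpgen}, $\pi_{\varphi_\bz}(\tau^n(a))\to E_\tau(\pi_{\varphi_\bz}(a))$ in the weak operator topology, and the sequence is bounded.
\item A normal state is weakly continuous on bounded sets, so $\tilde\eta(\pi_{\varphi_\bz}(a))=\tilde\eta(E_\tau(\pi_{\varphi_\bz}(a)))$.
\item The identity then extends to all of $\mathcal{R}_{\varphi_\bz}$: both sides are normal, and the localized elements of $\pi_{\varphi_\bz}(\ast_\bn\ga)$ span a weakly dense $*$-subalgebra.
\end{itemize}

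Care is needed over one point. The relevant von Neumann algebra is $\mathcal{R}_{\varphi_\bz}=\pi_{\varphi_\bz}(\ast_\bn\ga)''$, while $E_\tau$ was built using the whole of $\ast_\bz\ga$. I will check that the normal extension of $\eta$ lives on $\mathcal{R}_{\varphi_\bz}$, and that normality of $E_\tau$ on bounded sets is all that is needed.

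\emph{Bijectivity and continuity.} Define the inverse by $\eta\mapsto\tilde\eta\upharpoonright_{\mathcal{T}_{\varphi_\bz}}$, transported back to $\mathcal{T}_\varphi$ via $\Phi$. The identity above gives surjectivity of $\Psi$. Injectivity follows because $E_\tau$ is onto $\mathcal{T}_{\varphi_\bz}$ and restricts to the identity there, so $\Psi(\om)$ recovers $\om$ on $\mathcal{T}_{\varphi_\bz}$. The spreadability (exchangeability) statement then follows from the first step.

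For the homeomorphism claim, I read both sets with the weak$^*$ topologies: $\sigma(\mathcal{T}_*,\mathcal{T})$ on $\cs_*(\mathcal{T}_\varphi)$, and pointwise convergence on $\ast_\bn\ga$ on $\cs^\tau_{\pi_{\varphi_\bz}}$. Continuity of $\Psi$ is immediate. For continuity of the inverse, I use that pointwise convergence on $\ast_\bn\ga$ gives convergence on the elements $E_\tau(\pi_{\varphi_\bz}(a))$. I would then have to argue that these elements are dense enough in $\mathcal{T}_{\varphi_\bz}$. If needed, I will fall back on the norm topology, where $\Psi$ is clearly isometric, since $\|\om\circ E_\tau\|=\|\om\|$ and $E_\tau$ is a contractive projection onto $\mathcal{T}_{\varphi_\bz}$.
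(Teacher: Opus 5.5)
Your treatment of the bijection is correct and matches the argument the paper has in mind. The corollary is stated without a written proof. The identity $\tilde\eta=\tilde\eta\upharpoonright_{\mathcal{T}_{\varphi_\bz}}\circ E_\tau$, which you derive from stationarity, the weak-operator formula $E_\tau(\pi_{\varphi_\bz}(a))=\lim_n\pi_{\varphi_\bz}(\tau^n(a))$ and normality, is exactly the fact the paper invokes in the proof of Corollary \ref{purerestriction}: normal stationary states are determined by their restriction to the tail algebra. Well-definedness, spreadability or exchangeability of the image, injectivity and surjectivity all go through as you describe.

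The gap is in the homeomorphism part. The paper does not say which topologies are meant, and you leave the weak$^*$ reading open. No density argument can rescue that reading, because there the inverse of $\Psi$ is in general discontinuous. The span of the elements $E_\tau(\pi_{\varphi_\bz}(a))$ is only weakly dense in $\mathcal{T}_{\varphi_\bz}$, not norm dense.

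Here is a counterexample. Let $\ga=\bc^2=C(\{0,1\})$ and let $\varphi=\int_0^1\beta_p^{\otimes\bn}\,dp$ be the classical exchangeable law of coin tossing with uniformly distributed bias. Here $\beta_p$ is the Bernoulli state, and the product state is composed with the quotient onto $C(\{0,1\}^\bn)$.
\begin{itemize}
\item Via the directing parameter, $\mathcal{T}_\varphi\cong L^\infty([0,1],dp)$, and normal states are probability densities $\rho$.
\item One has $\Psi(\rho)(x)=\int_0^1F_x(p)\rho(p)\,dp$ with $F_x(p)=\beta_p^{\otimes\bn}(x)$ continuous in $p$. So the elements $E_\tau(\pi_{\varphi_\bz}(a))$ generate only $C[0,1]$ in norm.
\item Take $n_j=4^j$ for $j\geq 1$, $S_j=\bigcup_{k<n_j}[k/n_j,k/n_j+n_j^{-2}]$, $\rho_j=n_j\mathbf{1}_{S_j}$ and $S=\bigcup_jS_j$.
\item Then $\rho_j\,dp\to dp$ weakly, hence $\Psi(\rho_j)\to\Psi(1)=\varphi$ weak$^*$ on $\ast_\bn\ga$.
\item However $\int\rho_j\mathbf{1}_S\,dp=1$ for every $j$, while $\int\mathbf{1}_S\,dp\leq 1/3$. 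So $\rho_j\not\to 1$ in $\sigma(\mathcal{T}_*,\mathcal{T})$.
\end{itemize}

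You should therefore drop the weak$^*$ line and commit to a topology for which the statement is true. One option is the norm topologies, where your fallback works. You need to add that $\|f\circ\pi_{\varphi_\bz}\|=\|f\|$ for every normal functional $f$ on $\mathcal{R}_{\varphi_\bz}$, by Kaplansky density. Combined with $\|\om\circ E_\tau\|=\|\om\|$, this makes $\Psi$ an affine isometry. The other option is to give $\cs^\tau_{\pi_{\varphi_\bz}}$ the $\sigma$-weak topology of the normal extensions to $\mathcal{R}_{\varphi_\bz}$. Then continuity in both directions is immediate from your identity.
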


For any state $\varphi$ on $\ast_\bn\ga$, we denote by $\widetilde{\varphi}$ the vector state (as a state on the whole $\cb(\ch_\varphi)$) associated with the GNS vector $\xi_\varphi$ of the GNS representation $(\ch_\varphi, \xi_\varphi, \pi_\varphi)$ of $\varphi$, that is $\widetilde{\varphi}(T):=\langle T\xi_\varphi ,\xi_\varphi \rangle$, $T$ in $\cb(\ch_\varphi)$.
The following results completes the partial characterization of extremality given in \cite[Proposition 5.4.3]{DKW} in terms of pureness of the restriction to the tail algebra.

\begin{cor}\label{purerestriction}
A spreadable (resp. exchangeable) state on $\ast_\bn\ga$ is extreme in $\cs^{\bj_\bn}(\ast_\bn\ga)$ (resp. in $\cs^{\bp_\bn}(\ast_\bn\ga)$), or equivalently in $\cs^{\tau}(\ast_\bn\ga)$, 
if and only if the restriction of $\widetilde{\varphi}$ to the tail algebra is extreme in $\cs_*(\mathcal{T}_\varphi)$ ({\it i.e.} a pure and normal state of $\mathcal{T}_\varphi$).
\end{cor}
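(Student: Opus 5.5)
The plan is to reduce everything to Corollary \ref{normalinvspread}, which gives an affine homeomorphism between $\cs_*(\mathcal{T}_\varphi)$ and the face $\cs^\tau_{\pi_{\varphi_\bz}}$. The first task is to check that $\varphi$ corresponds to $\widetilde{\varphi}\upharpoonright_{\mathcal{T}_\varphi}$ under this map. By the $\varphi$-invariance of $E_\tau$ in Theorem \ref{condexpgen}, we have $\langle E_\tau(\pi_{\varphi_\bz}(x))\xi_{\varphi_\bz},\xi_{\varphi_\bz}\rangle=\varphi(x)$. Since $\xi_{\varphi_\bz}\in\ch_\bn$, the vector state of $\xi_{\varphi_\bz}$ on $\mathcal{T}_{\varphi_\bz}$ corresponds to $\widetilde{\varphi}\upharpoonright_{\mathcal{T}_\varphi}$ under the restriction isomorphism $\Phi$ of Theorem \ref{HSonZ}. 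So $\varphi$ is the image of $\widetilde{\varphi}\upharpoonright_{\mathcal{T}_\varphi}$.

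Next I reduce extremality in the big convex sets to extremality in $\cs^\tau_{\pi_{\varphi_\bz}}$. By Corollary \ref{faces}, $\cs^{\bp_\bn}\subset\cs^{\bj_\bn}\subset\cs^\tau$ is a chain of faces. Hence $\varphi$ is extreme in $\cs^{\bj_\bn}$ (resp. $\cs^{\bp_\bn}$) if and only if it is extreme in $\cs^\tau$, which is the ``equivalently'' clause of the statement. I then claim that $\varphi$ is extreme in $\cs^\tau(\ast_\bn\ga)$ if and only if it is extreme in $\cs^\tau_{\pi_{\varphi_\bz}}$.
\begin{itemize}
\item One direction is immediate, since $\cs^\tau_{\pi_{\varphi_\bz}}\subset\cs^\tau(\ast_\bn\ga)$.
\item For the other direction, suppose $\varphi=t\om+(1-t)\eta$ with $\om,\eta$ stationary. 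As in the proof of Corollary \ref{faces}, $\om\le t^{-1}\varphi$, so $\om$ is given by a vector-type functional $\langle\pi_\varphi(\cdot)S\xi_\varphi,\xi_\varphi\rangle$ with $S\in\pi_\varphi(\ast_\bn\ga)'$. Hence $\om$ is normal in $\pi_\varphi$, which is the subrepresentation of $\pi_{\varphi_\bz}$ on $\ch_\bn$. It is therefore normal in $\pi_{\varphi_\bz}$, and since it is stationary it lies in $\cs^\tau_{\pi_{\varphi_\bz}}$. The same holds for $\eta$.
\end{itemize}
Thus decompositions of $\varphi$ inside $\cs^\tau$ automatically take place inside $\cs^\tau_{\pi_{\varphi_\bz}}$, and the two notions of extremality agree.

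Finally, an affine bijection preserves extreme points. So $\varphi$ is extreme in $\cs^\tau_{\pi_{\varphi_\bz}}$ if and only if $\widetilde{\varphi}\upharpoonright_{\mathcal{T}_\varphi}$ is extreme in $\cs_*(\mathcal{T}_\varphi)$. It remains to note that a normal state is extreme among normal states exactly when it is pure. If a normal state $\psi$ were a nontrivial convex combination of arbitrary states $\psi_1,\psi_2$, then $\psi_i\le c\,\psi$ forces each $\psi_i$ to be normal, so extremality in $\cs_*$ already gives purity.

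The step I expect to need the most care is the normality claim in the second paragraph. There one must pass from domination $\om\le t^{-1}\varphi$ on $\ast_\bn\ga$ to normality with respect to $\pi_{\varphi_\bz}$ restricted to $\ast_\bn\ga$. This works because $\pi_\varphi$ is realised as a subrepresentation of $\pi_{\varphi_\bz}$, so a vector functional in $\ch_\bn$ extends to a normal functional on $\mathcal{R}_{\varphi_\bz}$.
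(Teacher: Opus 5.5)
Your proof is correct and is essentially the paper's argument, repackaged through the affine bijection of Corollary \ref{normalinvspread}. The paper proves the two directions by hand: it extends a nontrivial decomposition of $\widetilde{\varphi}\upharpoonright_{\mathcal{T}_\varphi}$ via $E_\tau$, and conversely it realizes a dominated invariant state as a vector state in the GNS space and uses that such states are determined by their restriction to the tail algebra. Those two steps are exactly your domination argument combined with the injectivity and surjectivity of that bijection. The only real differences are that you invoke Corollary \ref{normalinvspread} (stated without proof in the paper) as a black box where the paper re-derives the needed parts inline, and that you also explain why extremality in $\cs_*(\mathcal{T}_\varphi)$ coincides with purity.
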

\begin{proof}
Note that the fact that extremality in $\cs^{\bj_\bn}(\ast_\bn\ga)$  and in $\cs^{\tau}(\ast_\bn\ga)$ (resp. or in $\cs^{\bp_\bn}(\ast_\bn\ga)$) agree, is the content of Corollary \ref{faces}.

We will only deal with the exchangeable case, for the spreadable case can be handled in the same way.
As the tail algebra $\mathcal{T}_\varphi$ is identified with $\mathcal{R}^\tau_{\varphi_\bz}$, we may as well work in the GNS representation
of $\varphi_\bz$ if needs be.\\
 If $\widetilde{\varphi_\bz}$ fails to be extreme in $\cs_*(\mathcal{R}^\tau_{\varphi_\bz})$, then 
$\widetilde{\varphi_\bz}\upharpoonright_{\mathcal{R}^\tau_{\varphi_\bz}}=t\om_1+(1-t)\om_2$ for some $t\in (0,1)$ and
$\om_1\neq\om_2$ normal states on $\mathcal{R}^\tau_{\varphi_\bz}$.
The extensions $\widetilde{\om_i}=\om_i\circ E_\tau$, $i=1, 2$, to $\mathcal{R}_{\varphi_\bz}$
are exchangeable normal states of $\mathcal{R}_{\varphi_\bz}$
and $\widetilde{\varphi_\bz}=t\widetilde{\om_1}+(1-t)\widetilde{\om_2}$ (since $\widetilde{\varphi_\bz}$ is uniquely determined by its restriction to the tail algebra). This shows that $\varphi=t\varphi_1+(1-t)\varphi_2$, with
$\varphi_i(x)=\widetilde{\om_i}(\pi_{\varphi_\bz}(x))$, $x$ in $\ast_\bn\ga$. Now $\varphi_1\neq \varphi_2$, because
by normality $\widetilde{\om_1}$ and  $\widetilde{\om_2}$ are completely determined by their restriction to the weakly dense 
$^*$-subalgebra $\pi_{\varphi_\bz}(\ast_\bn\ga)$. Therefore, $\varphi$ is not extreme in $\cs^{\bp_\bn}(\ast_\bn\ga)$.\\
Conversely, if $\varphi$ is not extreme in $\cs^{\bp_\bn}(\ast_\bn\ga)$, then it dominates some exchangeable state $\om$ with
$\om\neq \varphi$, say $\om(x^*x)\leq M\varphi(x^*x)$, for all $x$ in $\ast_\bn\ga$, for some $M\geq 1$. By the non-commutative Radon-Nikodym theorem, $\om$ can be realized as a vector state, $\widetilde{\om}$, in the GNS representation
of $\varphi$, and $\widetilde{\om}\leq M\widetilde{\varphi}$ on the positive cone of $\mathcal{R}_\varphi$ as well. Since exchangeable states which are normal in the GNS representation of $\varphi$ are uniquely determined by their restriction to $\mathcal{T}_\varphi$,
the restriction of $\widetilde{\om}$ to the tail algebra is still different from $\widetilde{\varphi}\upharpoonright_{\mathcal{T}_\varphi}$, meaning that  the latter state cannot be extreme among all normal states of the tail algebra.
\end{proof}


\section {Non-commutative Hewitt-Savage $0-1$ law}
\label{01law}

The present section is devoted to characterizing when the tail algebra of a spreadable state is trivial. As explained
in the introduction, this is done in a twofold fashion in terms of a dynamical condition, attractiveness, and a factorization property of the state, forward-decoupling condition, which we next define. To do so, let us consider the convex set of all states which are normal in a given representation $\pi$ of the free product $\ast_\bn\ga$,
$$\cs_{\pi}:=\{\om\in\cs(\ast_\bn\ga):\ \om(\cdot)= {\rm tr}(\pi(\cdot)T)\, \text{with}\, T\in\cb(\ch_\pi)_+,\,{\rm tr}(T)=1\}\, .$$

\begin{defin} \label{equi}
A stationary state $\varphi$ on $\ast_\bn\ga$ is said to  be:
\begin{itemize}
\item[-]
$\pi$-attractive ($\pi$ being a representation of $\ast_\bn\ga$) if for every
$\om$ in $\cs_\pi$ one has
\begin{equation}\label{equilibrium}
\lim_{n\rightarrow\infty}    \om(\tau^n(x))=\varphi(x)\,\, \textrm{for all}\, x\in\ast_\bn\ga\,. 
\end{equation}
\item[-] forward-decoupling if for all localized $x, y, z$ in $\ast_\bn\ga$ one has
\begin{equation}\label{fdecoupling}
\varphi(xyz)=\varphi(zx)\varphi(y),
\end{equation}
provided that $x,y,z$ is have supports contained in finite subsets $F_x, F_y, F_z$ respectively, in the sense of Definition \ref{localized},  with $F_y > F_x \cup F_z$.
\item[-] block-singleton if  for all localized $x, y, z$ in $\ast_\bn\ga$ one has
\begin{equation}\label{blocksingleton}
\varphi(xyz)=\varphi(zx)\varphi(y)
\end{equation}
provided that $x,y,z$ is have supports contained in finite subsets $F_x, F_y, F_z$ respectively, in the sense of Definition \ref{localized},  with $F_y \cap (F_x \cup F_z)=\emptyset$.
\end{itemize}
\end{defin}
Note that for exchangeable states being forward-decoupling or block-singleton is the same property.\\

\begin{rem}
The block-singleton condition was introduced in \cite{ACGL}, which highlighted its relevance  as a form  of non-commutative   independence. However, several statements  in that paper do not hold in full generality. Since  
the block-singleton condition plays a relevant role in our results as well, we shall comment on these technical 
 issues in Remarks \ref{nocentralsupport} and \ref{blocknoteqprod} and provide explicit counterexamples.
\end{rem}

What follows is exactly the non-commutative Hewitt-Savage  0--1 law.
\begin{thm}\label{01lawspread}
Let $\varphi$ be a spreadable (resp. exchangeable) state  on $\ast_\bn\ga$. The following are equivalent:
\begin{enumerate}
\item[(i)] $\mathcal{T}_\varphi=\mathbb{C}$;
\item[(ii)] $\varphi$ is forward-decoupling (resp. is block-singleton);
\item[(iii)] $\varphi$ is $\pi_{\varphi_\bz}$-attractive.
 \end{enumerate}
\end{thm}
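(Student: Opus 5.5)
The plan is to work throughout in the GNS representation of the bilateral extension $\varphi_\bz$. Theorem \ref{HSonZ} identifies $\mathcal{T}_\varphi$ with $\mathcal{T}_{\varphi_\bz}=\mathcal{R}^\tau_{\varphi_\bz}$, and Theorem \ref{condexpgen} provides the normal shift-invariant conditional expectation $E_\tau$. The key observation I expect to use repeatedly is the following. For localized $a\in\ast_\bn\ga$ and localized $b,c\in\ast_\bz\ga$,
$$\langle E_\tau(\pi_{\varphi_\bz}(a))\pi_{\varphi_\bz}(b)\xi_{\varphi_\bz},\pi_{\varphi_\bz}(c)\xi_{\varphi_\bz}\rangle=\lim_n\varphi_\bz(c^*\tau^n(a)b)=\varphi_\bz(c^*a'b)$$
for any translate $a'$ of $a$ whose support lies to the right of those of $b$ and $c$. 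This follows from $\bj_\bz$-invariance, exactly as in the proof of Theorem \ref{condexpgen}. In words, the matrix coefficients of $E_\tau(\pi(y))$ are precisely the three-block expressions appearing in the forward-decoupling condition.

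For (i)$\Rightarrow$(ii), triviality of the tail algebra together with $\varphi$-invariance of $E_\tau$ forces $E_\tau(T)=\langle T\xi,\xi\rangle I$. Taking $x,y,z$ localized with $F_y>F_x\cup F_z$, I put $b=z$ and $c=x^*$. The display then gives $\varphi(xyz)=\langle E_\tau(\pi(y))\pi(z)\xi,\pi(x^*)\xi\rangle=\varphi(y)\varphi(xz)$. The statement writes $\varphi(zx)$, so I will double-check the ordering convention, but the computation is what it is. For (ii)$\Rightarrow$(i), the same identity read backwards shows that $E_\tau(\pi(y))$ and $\varphi(y)I$ have the same matrix coefficients on the dense set of localized vectors. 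Hence $E_\tau(\pi(y))=\varphi(y)I$ for all localized $y\in\ast_\bn\ga$, and by normality $E_\tau$ maps $\mathcal{R}_{\varphi_\bz}$ onto $\bc$. Since $E_\tau$ is onto $\mathcal{T}_{\varphi_\bz}$, we get $\mathcal{T}_{\varphi_\bz}=\bc$, and $\mathcal{T}_\varphi=\bc$ by Theorem \ref{HSonZ}. In the exchangeable case, forward-decoupling and block-singleton coincide, as already noted after Definition \ref{equi}.

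For (i)$\Rightarrow$(iii), let $\om\in\cs_{\pi_{\varphi_\bz}}$ be given by a density $T$. Then $\om(\tau^n(x))={\rm tr}(\pi_{\varphi_\bz}(\tau^n(x))T)$. Theorem \ref{condexpgen} gives weak operator convergence of $\pi_{\varphi_\bz}(\tau^n(x))$ to $E_\tau(\pi_{\varphi_\bz}(x))=\varphi(x)I$ for localized $x$. Since the sequence is bounded, weak and $\sigma$-weak convergence agree, so $\om(\tau^n(x))\to\varphi(x)$. A $3\varepsilon$ argument using norm density of localized elements extends this to all $x\in\ast_\bn\ga$. For (iii)$\Rightarrow$(i), I apply attractivity to the vector states $\om_{b,c}$ and their polarizations. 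By linearity the vector functionals $\langle\pi(\cdot)\eta,\eta\rangle$ with $\eta$ a localized GNS vector span all the needed matrix coefficients. This yields $\langle E_\tau(\pi(x))\eta_1,\eta_2\rangle=\varphi(x)\langle\eta_1,\eta_2\rangle$, hence $E_\tau(\pi(x))=\varphi(x)I$, and the argument concludes as in (ii)$\Rightarrow$(i).

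The main obstacle I expect is the careful justification of the key identity in the general spreadable (non-exchangeable) case. The monotone map must fix the supports of both $b$ and $c^*$ while moving $y$ far to the right, and in $\ast_\bz\ga$ this requires that $y$ sit to the right of everything else. This is precisely why forward-decoupling, rather than block-singleton, is the right condition there. I would also have to take care that the vectors $\pi_{\varphi_\bz}(b)\xi$ with $b\in\ast_\bz\ga$, not merely in $\ast_\bn\ga$, suffice. Shift invariance lets me translate everything into $\ast_\bn\ga$ and invoke the hypothesis on $\varphi$ there.
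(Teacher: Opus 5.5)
Your proposal is correct, but its route differs from the paper's. The paper proves (ii)$\Rightarrow$(i)$\Rightarrow$(iii)$\Rightarrow$(ii).

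\emph{The paper's route.} Its (ii)$\Rightarrow$(i) stays in the unilateral GNS space $\ch_\varphi$ and does not use $E_\tau$:
\begin{enumerate}
\item forward-decoupling shows that $\xi_\varphi$ is separating for $\mathcal{T}_\varphi$;
\item the compression $T\mapsto E_\varphi TE_\varphi$ is shown to be injective on $\mathcal{T}_\varphi$, using $V_\tau^kBE_\varphi=BE_\varphi$, which comes from Theorem \ref{HSonZ};
\item Proposition \ref{ergospread} gives $\dim E_\varphi=1$, since forward-decoupling implies the ordered product condition.
\end{enumerate}
Its (iii)$\Rightarrow$(ii) applies attractivity to the single normal state $\varphi_\bz(x\,\cdot\,x^*)/\varphi_\bz(xx^*)$, uses spreadability, and polarizes.

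\emph{Your route.} You send every implication through Theorem \ref{condexpgen}. Your matrix-coefficient identity shows that forward-decoupling is literally the statement $E_\tau(\pi_{\varphi_\bz}(y))=\varphi(y)I$ tested on localized vectors. Condition (iii) is the same statement tested on vector states. So all three conditions become equivalent to $E_\tau=\langle\,\cdot\,\xi_{\varphi_\bz},\xi_{\varphi_\bz}\rangle I$. The steps you flag do go through: translating $b,c$ into $\ast_\bn\ga$ by shift invariance, polarizing the vector states $\omega_\eta$, and combining normality of $E_\tau$ with $E_\tau\upharpoonright_{\mathcal{T}_{\varphi_\bz}}=\mathrm{id}$ to conclude $\mathcal{T}_{\varphi_\bz}=\bc$.

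\emph{What each buys.} Your approach is shorter and makes transparent that (ii) is just a reformulation of $E_\tau$ being scalar. The paper's longer route produces independent tools: the separating-vector and compression embedding of $\mathcal{T}_\varphi$ into $E_\varphi\mathcal{R}_\varphi E_\varphi$, and the link to level (ii) of the hierarchy via $\dim E_\varphi=1$. It reuses both later, in Theorem \ref{isotailtorus} and in the non-unital Theorem \ref{mainnonunital}.

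\emph{On the ordering you flagged.} The intended condition is $\varphi(xyz)=\varphi(xz)\varphi(y)$, which is exactly what you derive. The $\varphi(zx)$ in Definition \ref{equi} is a misprint. The paper's own proof of (iii)$\Rightarrow$(ii) establishes $\varphi_\bz(xyx^*)=\varphi_\bz(xx^*)\varphi_\bz(y)$ and then polarizes. Remark \ref{pasttail} and the computation at the end of Example \ref{monotonetail} use the same ordering. Your (ii)$\Rightarrow$(i) genuinely depends on this reading, because $\langle\pi_{\varphi_\bz}(b)\xi_{\varphi_\bz},\pi_{\varphi_\bz}(c)\xi_{\varphi_\bz}\rangle=\varphi_\bz(c^*b)$.
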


\begin{proof}
We will prove the implications $(ii)\Rightarrow (i)\Rightarrow (iii)\Rightarrow (ii)$.\\
We start by proving that any forward-decoupling state has  a trivial tail algebra.
We claim that for any such state $\varphi$ 
the GNS vector $\xi_{\varphi}$ is separating for $\mathcal{T}_{\varphi}$.\\
Indeed, suppose that
$T$ in $\mathcal{T}_{\varphi}$ vanishes on $\xi_{\varphi}$, 
For any localized element $x$ of
$\ast_\bn\ga$, say with support contained in $\{0, \ldots, l\}$, we have
\begin{align*}
\|T\pi_{\varphi}(x)\xi_{\varphi}\|^2&= \langle T\pi_{\varphi}(x)\xi_{\varphi},T\pi_{\varphi}(x)\xi_{\varphi} \rangle =
\langle \pi_{\varphi}(x^*)T^*T\pi_{\varphi}(x)\xi_{\varphi} ,\xi_{\varphi}  \rangle\\
&= \langle T^*T\xi_{\varphi} ,\xi_{\varphi}  \rangle \langle  \pi_{\varphi}(x^*x)\xi_{\varphi} ,\xi_{\varphi}  \rangle=0\,,
\end{align*}
since the support of $T$ is certainly to the right of $l$.\\
We next show that knowing that $\xi_\varphi$ is separating for $\mathcal{T}_\varphi$ allows us to
inject $\mathcal{T}_{\varphi}$ into 
$E_{\varphi}\mathcal{R}_{\varphi} E_{\varphi}$ via the linear map $\mathcal{T}_\varphi \ni T\mapsto E_\varphi T E_\varphi$.
To this aim, we start by showing that for all $A, B$ in $\mathcal{T}_{\varphi}$, we have 
\begin{equation}\label{projinbetween}
E_{\varphi} A E_{\varphi} B E_{\varphi}=E_{\varphi}AB E_{\varphi}\,.
\end{equation}
In order to prove this, we need to take advantage of the representation of $E_{\varphi}$ as the strong limit $E_{\varphi} =\lim_n \frac{1}{n}\sum_{k=0}^{n-1}V_\tau^k$. We then have:
\begin{align*}
 &E_{\varphi} A E_{\varphi}  B  E_{\varphi} =\lim_n E_{\varphi} A \frac{1}{n}\sum_{k=0}^{n-1}V_\tau^kBE_{\varphi}=\lim_n E_\varphi A\sum_{k=0}^{n-1}  \frac{1}{n}B E_{\varphi}\\
&=E_{\varphi}ABE_{\varphi}\,,
\end{align*}
where we have exploited the equality $V_\tau^kBE_{\varphi}=BE_{\varphi}$ for all natural $k$.\\
This in turn can be seen in the following way. By Theorem \ref{HSonZ}, there exists
a unique $\widetilde{B}$ in $\mathcal{R}_{\varphi_{\bz }}^{\tau}$  whose restriction to $\ch_\bn\cong\ch_\varphi$ 
coincides with $B$. By restricting the operator equality $U_\tau^k\widetilde{B} (U_\tau^k)^*=\widetilde{B}$
to vectors of the range of $E_{\varphi}$, one gets the very equality $V_\tau^kBE_{\varphi}=BE_{\varphi}$ we needed to verify.\\
We are now ready to get to the conclusion.
If $T$ in $\mathcal{T}_{\varphi}$  is such that   $E_{\varphi}T E_{\varphi}=0$, then 
 $E_{\varphi}T^*T E_{\varphi}=0$ thanks to Equation \eqref{projinbetween}. Therefore, we have
$TE_{\varphi}=0$, hence $T\xi_{\varphi}=0$, which gives $T=0$ because $\xi_{\varphi}$ is separating  for 
$\mathcal{T}_\varphi$.\\
Now  $ E_{\varphi}$ is the projection onto $\bc\xi_{\varphi}$ by Proposition \ref{ergospread} because the forward-decoupling condition  implies $(iii)$ of the mentioned proposition.
As a consequence of $E_\varphi$ being one-dimensional, we finally see that
$\mathcal{T}_\varphi$ is trivial.\\
As for the implication $(i)\Rightarrow (iii)$, if $\mathcal{T}_\varphi$ is trivial, the conditional expectation $E_\tau$ constructed in Theorem \ref{condexpgen} projects onto $\bc$ and thus coincides with the normal extension of $\varphi_{\bz }$ on $\mathcal{R}_{\varphi_\bz}$.
Let $\om$ be a state in the folium $\cs_{\pi_{\varphi_\bz}}$. For any localized $x$ in
$\ast_\bn\ga$, by Theorem \ref{condexpgen}, we have $\lim_n\om(\tau^n(x))=\tilde{\om} (E_\tau(\pi_{\varphi_\bz}(x)))=\varphi(x).\, $\\
Finally, we prove that $(iii)\Rightarrow (ii)$.\\
Let $x$, $y$, $z$ be localized in $\ast_{\bz }\ga$, with the support of $y$ to the right of the supports of both $x$ and $z$.
We start by observing that a simple polarization argument shows that it suffices to verify the forward-decoupling condition when $z= x^*$, for the general case of $z$ not necessarily equal to $x^*$ is got to by considering $\varphi((x+z)y(x^*+z^*))$.\\
Now if $\varphi(xx^*)=0$, then  by the Cauchy-Schwartz inequality we have $|\varphi(xyx^*)|\leq \varphi(xx^*)^{\frac{1}{2}} \varphi(xy^*yx^*)^\frac{1}{2}=0$, hence $\varphi(xyx^*)=0=\varphi(xx^*)\varphi(y)$ and there is nothing left to prove. Therefore, we can safely suppose
that $\varphi(xx^*)$ does not vanish.
If $(iii)$ holds,  we can apply the attractivity condition to the state
$\omega(y):=\frac{1}{\varphi_{\bz }(xx^*)}\varphi_{\bz }(xyx^*)$, which is of course normal in the GNS representation of
$\varphi_\bz$, obtaining
 \begin{align*} 
&\lim_{n\rightarrow\infty} \frac{1}{\varphi_{\bz }(xx^*)} \varphi_{\bz }(x\tau^n(y)x^*)=
\varphi_{\bz }(y)\,,
\end{align*}
that is
  \begin{align*} 
& \lim_{n\rightarrow\infty}\varphi_{\bz }(x\tau^n(y)x^*)=
\varphi_{\bz }(y) \varphi_{\bz }(xx^*)\,.
\end{align*}
On the other hand, for $n$ big enough
there exists a monotone map in $\mathbb{J}_\bz$ that is
the identity on the supports of $x$  and 
acts as
the shift by $n$
on the support of $y$. Therefore, by $\mathbb{J}_\bz$-invariance of 
$\varphi_{\bz }$, we have
$\ \varphi_{\bz }(xyx^*)=   \varphi_{\bz }(x\tau^n(y)x^*)$, hence
$\ \varphi_{\bz }(xyx^*)= \varphi_{\bz }(xx^*)  \varphi_{\bz }(y)$, which is exactly what we meant to prove.
\end{proof}

We now present some notable examples to illustrate the above result where specific non-commutative independeces are
introduced.

\begin{defin}\label{freetens}
Following \cite{HO}, a quantum stochastic process with distribution  $\varphi$   is said to be:
\begin{itemize} 
\item[-]   freely independent if

$$
\varphi(\iota_{i_1}(a_1) \cdots \iota_{i_m}(a_m))=0
$$
when \(i_1 \neq i_2 \neq \cdots \neq i_m\) and $\varphi(\iota_{i_j}(a_j)) = 0$ for all $j=1, 2, \ldots m$;
\item[-] tensor independent if
$$\varphi(\iota_{i_1}(a_1) \cdots \iota_{i_m}(a_m))=\varphi(\iota_{i_1}(a_1))\varphi(\iota_{i_2}(a_2) \cdots \iota_{i_m}(a_m))$$ when $i_1$ 
is not in  $\{i_k\}_{k=2}^m$, or
$$\varphi(\iota_{i_1}(a_1) \cdots \iota_{i_m}(a_m))=\varphi(\iota_{i_2}(a_2) \cdots \iota_{i_1}(a_{r-1}) (\iota_{i_1}(a_1a_r)) \iota_{i_m}(a_m))$$ 
when $i_1=i_r$.
\end{itemize}
\end{defin}

\begin{cor}\label{freeprod}
The distribution $\varphi$ of a quantum stochastic process which is identically distributed and either freely or tensor independent
satisfies the three equivalent conditions in Theorem \ref{01lawspread}. 
\end{cor}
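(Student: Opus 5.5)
To apply Theorem \ref{01lawspread}, I first check that $\varphi$ is exchangeable, since then it is in particular spreadable. It then suffices to verify condition (ii) in its exchangeable form, the block-singleton property: $\varphi(xyz)=\varphi(zx)\varphi(y)$ whenever $x,y,z$ are localized with $F_y\cap(F_x\cup F_z)=\emptyset$.

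\emph{Exchangeability.} In both the free and the tensor case, the independence relations of Definition \ref{freetens} determine every mixed moment from the one-variable marginals $\varphi\circ i_j$.
\begin{itemize}
\item[-] In the free case, one uses the standard recursion. Centering each letter, $a_j=(a_j-\varphi(\iota_{i_j}(a_j)))+\varphi(\iota_{i_j}(a_j))$, expresses a moment of an alternating word through moments of strictly shorter words.
\item[-] In the tensor case, the two rules allow one either to factor out a letter whose index does not reappear, or to merge two letters carrying the same index.
\end{itemize}
In both recursions the index labels enter only through the pattern of which positions share an index. Since the process is identically distributed, $\varphi\circ i_j$ does not depend on $j$, so by induction on word length $\varphi\circ\alpha_\sigma=\varphi$ on monomials for every $\sigma\in\bp_\bn$. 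By linearity and continuity this holds on all of $\ast_\bn\ga$.

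\emph{Block-singleton.} Take $x,y,z$ to be monomials, which is enough by linearity and density. Write $S=F_x\cup F_z$, $B_1=\ast_{S}\ga$ and $B_2=\ast_{F_y}\ga$; the index sets $S$ and $F_y$ are disjoint.
\begin{itemize}
\item[-] In the free case, the subalgebras $\ast_S\ga$ and $\ast_{F_y}\ga$ are freely independent under $\varphi$. This is the standard associativity of freeness, deduced from the defining moment condition by the usual centering argument. Write $y=\varphi(y)1+y^\circ$ with $\varphi(y^\circ)=0$. Then $\varphi(xyz)=\varphi(y)\varphi(xz)+\varphi(xy^\circ z)$. The last term is a moment of the form $b_1 c\, b_2$ with $b_1,b_2\in B_1$ and $c$ centered in $B_2$. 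Centering $b_1,b_2$ as well and expanding, every resulting term vanishes by freeness, so $\varphi(xy^\circ z)=0$. Hence $\varphi(xyz)=\varphi(xz)\varphi(y)$.
\item[-] In the tensor case, I repeatedly apply the merge rule to the letters of $y$. Because the indices in $F_y$ are disjoint from $S$, this moves all letters of $y$ past those of $x$ and $z$ and collapses $y$ into a product at its own indices. The outcome is the factorization $\varphi(wy)=\varphi(w)\varphi(y)$ with $w$ supported in $S$, which gives $\varphi(xyz)=\varphi(xz)\varphi(y)$.
\end{itemize}

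\emph{Main obstacle.} The block-singleton condition requires $\varphi(zx)\varphi(y)$, while the computations above yield $\varphi(xz)\varphi(y)$. I need $\varphi(xz)=\varphi(zx)$ for $x,z$ supported in $S$, which is a traciality-type identity that need not hold. The way around it is polarization, exactly as in the proof of $(iii)\Rightarrow(ii)$ of Theorem \ref{01lawspread}: it suffices to treat the case $z=x^*$. The freeness/tensor argument gives $\varphi(xyx^*)=\varphi(xx^*)\varphi(y)$, and this is precisely the form that the polarization argument reduces the condition to, since there the order $x\cdots x^*$ pairs with $\varphi(xx^*)$. Once this is checked, the implication (ii)$\Rightarrow$(i),(iii) of Theorem \ref{01lawspread} yields all three conditions.
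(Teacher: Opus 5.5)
Your proof is correct in substance and follows the paper's route: you establish exchangeability and the block-singleton property, then invoke the exchangeable form of Theorem \ref{01lawspread}. The differences are in how much is cited. For the free case the paper simply quotes that identically distributed free processes are free product states $\ast\omega$, and that such states are known to be block-singleton; it calls the tensor case a trivial computation. Your centering argument via associativity of freeness, and your explicit exchangeability check (which the paper leaves implicit), supply the same content by hand.

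Your final ``main obstacle'' paragraph, however, does not do what you claim.
\begin{itemize}
\item Polarization cannot exchange $\varphi(xz)$ for $\varphi(zx)$. The forms $(u,v)\mapsto\varphi(uyv^*)$, $(u,v)\mapsto\varphi(uv^*)\varphi(y)$ and $(u,v)\mapsto\varphi(v^*u)\varphi(y)$ are all sesquilinear, so reducing to $z=x^*$ preserves whichever ordering you start from.
\item Read literally, $\varphi(xyz)=\varphi(zx)\varphi(y)$ reduces to $\varphi(xyx^*)=\varphi(x^*x)\varphi(y)$. You have not shown this, and it is false in general. Taking $y=i_k(1)$ with $k\notin F_x\cup F_z$ would force $\varphi(xz)=\varphi(zx)$, i.e.\ traciality. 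That already fails for $\ast\omega$ with $\omega$ non-tracial.
\end{itemize}
So you are leaning on an inconsistency in the paper rather than resolving it.

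The correct observation is that $\varphi(zx)$ in Definition \ref{equi} is a misprint for $\varphi(xz)$. That is the form the paper actually uses elsewhere:
\begin{itemize}
\item the proof of (ii)$\Rightarrow$(i) in Theorem \ref{01lawspread} factors $\langle\pi_\varphi(x^*)T^*T\pi_\varphi(x)\xi_\varphi,\xi_\varphi\rangle$ through $\varphi(x^*x)$;
\item the conditional version of the condition is $E_\tau(XE_\tau(Y)Z)$;
\item Example \ref{monotonetail} compares against $\varphi(xz)\varphi(y)$.
\end{itemize}
With that reading, your identity $\varphi(xyz)=\varphi(xz)\varphi(y)$ is exactly the block-singleton condition, and no polarization step is needed.
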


\begin{proof}
For the free case, note that in order for $\varphi$ to be identically distributed and freely independent, it is necessary and sufficient that
$\varphi=\ast\om$, the infinite free product of some state $\om$ on the sample algebra $\ga$, see \cite{A}. Now, the conclusion follows from the fact that free product states are block-singleton, {\it cf.} \cite[Proposition 2.2]{ACGL}.\\
As for the tensor indepedence case, it is a trivial computation that a tensor independent state is also block-singleton.
\end{proof}

\begin{rem}
\begin{itemize}

\medskip
\item[-] Boolean, monotone, anti-monotone independence will be addressed in Section 7 as these forms of independece need to be set in the framework of non-unital processes.
\item[-] Further classes of processes which satisfy the thesis of the Hewitt-Savage 0--1 law but do not satisfy any of
the five Muraki's natural forms of non-commutative independence are supplied by twisted tensor products, see Remark 8.2.
\item[-] An example of a state that is forward-decoupling while not being block-singleton is given in  Example \ref{monotonetail}.
\end{itemize}
\end{rem}

Along the way, we also provide an application of the dynamical condition in Theorem \ref{01lawspread} to general representation theory 
of infinite free product $C^*$-algebras, which represents a free analogue of \cite[Theorem 6.1]{Sto}.

\begin{cor}
Two free product states  $\ast_\bn\om$ and $\ast_\bn\varphi$ on $\ast_\bn\ga$ are quasi-equivalent if and only if $\om=\varphi$.

\end{cor}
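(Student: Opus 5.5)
The "if" direction is trivial, so the work is in showing that quasi-equivalence forces $\om=\varphi$. The tool will be the attractivity condition (iii) of Theorem \ref{01lawspread}.

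Both $\ast_\bn\om$ and $\ast_\bn\varphi$ are exchangeable, hence spreadable. By Corollary \ref{freeprod} they satisfy the equivalent conditions of Theorem \ref{01lawspread}. The difficulty is that (iii) is attractivity in the representation $\pi_{(\ast\varphi)_\bz}$ of the bilateral extension, whereas quasi-equivalence concerns the GNS representations $\pi_{\ast\om}$ and $\pi_{\ast\varphi}$ on $\ast_\bn\ga$. The bridge is that $\ch_\bn\subset\ch_{\varphi_\bz}$ carries a subrepresentation unitarily equivalent to $\pi_{\ast\varphi}$. Consequently every state that is normal in $\pi_{\ast\varphi}$ is a density-matrix state supported on $\ch_\bn$, and so it lies in $\cs_{\pi_{(\ast\varphi)_\bz}}$. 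In other words, the folium of $\pi_{\ast\varphi}$ is contained in the set of states for which attractivity is available.

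The argument then runs as follows. Suppose $\ast_\bn\om$ and $\ast_\bn\varphi$ are quasi-equivalent. Then $\ast_\bn\om$ is normal in $\pi_{\ast\varphi}$, and by the previous paragraph it lies in $\cs_{\pi_{(\ast\varphi)_\bz}}$. Applying (iii) of Theorem \ref{01lawspread} for $\ast_\bn\varphi$ to this state gives
\[
\lim_n \ast_\bn\om(\tau^n(x))=\ast_\bn\varphi(x)\quad\text{for all } x\in\ast_\bn\ga .
\]
Since $\ast_\bn\om$ is shift-invariant, the left-hand side equals $\ast_\bn\om(x)$ for every $n$, so $\ast_\bn\om=\ast_\bn\varphi$. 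Restricting to $i_1(\ga)$ yields $\om=\varphi$.

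The main point needing care is the inclusion of the folium of $\pi_{\ast\varphi}$ into $\cs_{\pi_{(\ast\varphi)_\bz}}$. The paper defines $\cs_\pi$ by trace-class operators, which is exactly normality, and quasi-equivalence means the two folia coincide. So only the identification of $\pi_{\ast\varphi}$ with the restriction of $\pi_{(\ast\varphi)_\bz}$ to $\ch_\bn$ is needed, and that was recorded when $\ch_\bn$ was introduced.
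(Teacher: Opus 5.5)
Your proof is correct. Its decisive step is the same as in the paper: $\pi_{\varphi_\bz}$-attractivity of the free product state, applied to the shift-invariant state $\ast_\bn\om$, forces $\ast_\bn\om=\ast_\bn\varphi$. Where you differ is in how $\ast_\bn\om$ is placed inside $\cs_{\pi_{(\ast\varphi)_\bz}}$.

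The paper first relabels the index set by a bijection $f:\bn\to\bz$. It uses $\ast_\bz\om\circ\Psi_f=\ast_\bn\om$ to turn quasi-equivalence on $\ast_\bn\ga$ into quasi-equivalence of $\ast_\bz\om$ and $\ast_\bz\varphi$ on $\ast_\bz\ga$, where $\ast_\bz\varphi$ is the bilateral extension. Normality in the bilateral GNS representation can then be read off directly. This relabeling relies on a specific feature of free product states: they are determined by the single-site state, so they are insensitive to relabeling the index set by an arbitrary, not merely finitary, bijection.

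You instead use the general fact that $\pi_{\ast_\bn\varphi}$ is unitarily equivalent to the subrepresentation of $\pi_{(\ast\varphi)_\bz}$ on $\ch_\bn$. A density matrix $T$ for $\pi_{\ast\varphi}$ then becomes the density matrix $WTW^*$ for $\pi_{(\ast\varphi)_\bz}$, where $W$ is the intertwining isometry.

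Your bridge is shorter and uses nothing about free products beyond the $0$--$1$ law. It actually proves more: any spreadable state $\psi$ satisfying the conditions of Theorem \ref{01lawspread} is the only stationary state normal in $\pi_\psi$, and hence the only stationary state quasi-equivalent to $\psi$. What the paper's relabeling gives in exchange is that quasi-equivalence over $\bn$ and over $\bz$ are equivalent, which the corollary does not need.
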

\begin{proof}
We first observe that one may as well work with the infinite free product indexed by $\bz$.
 Indeed, by universality of free products any bijection
$f:\bn\rightarrow \bz$ can be lifted to a $*$-isomorphism $\Psi_f:\ast_\bn\ga\rightarrow\ast_\bz\ga$. Since 
$\Psi_f\circ\iota_n=\iota_{f(n)}$ for all $n$ in $\bn$, we have $\ast_\bz\om\circ\Psi_f=  \ast_\bn\om$, from which it follows that quasi-equivalence of $\ast_\bn\om$ and $\ast_\bn\varphi$ is the same  as  quasi-equivalence of $\ast_\bz\om$ and $\ast_\bz\varphi$.\\
That said, let $\om, \varphi$ be states on $\ga$ such that the GNS representations of 
$\ast_\bz\om$ and $\ast_\bz\varphi$ are quasi-equivalent.
In particular, $\ast_\bz\om$ is  normal in the GNS representation of $\ast_\bz\varphi$, but then  $(\ast_\bz\om)\,\circ\tau^n=\ast_\bz\om$ must converge to
$\ast_\bz\varphi$, that is $\ast_\bz\om=\ast_\bz\varphi$, that is $\om=\varphi$.
\end{proof}

\subsection{Hierarchy of ergodicities}

In this section what we called a hierarchy of ergodicities is established. What we do is exhibit four increasingly stronger levels of ergodicity for states on the infinite free product, ranging from mere extremality to extremality with central support, and, importantly, including the Hewitt-Savage 0--1 law.\\ 
In the  setting of Classical Probability, all these levels collapse to the same notion since states on commutative
$C^*$-algebras always have central support. Interestingly, the hypothesis of central support (or rather of having a von Neumann algebra with a normal faithful state) is the basis of much literature in the field, see {\it e.g.} \cite{BCS, GK08, K, KS}.\\
Furthermore, each level of ergodicity is characterized in a number of equivalent ways. 
Finally, in the exchangeable case the fourth and strongest level corresponds to having a trivial exchangeable algebra in the GNS representation of the state.\\

First of all we recall the following.
\begin{defin}
Let $\varphi$ be a state on $\ast_\bn\ga$. $\varphi$ is said to 
\begin{itemize}
\item[-]  be \emph{strongly clustering} if 
$$\lim_{n\rightarrow\infty} \varphi(x\tau^n(y))=\varphi(x)\varphi(y),$$  for  all $x, y$ in $\ast_\bn\ga$;
\item[-] satisfy the \emph{full product condition} if 
$$\varphi(xy)=\varphi(x)\varphi(y),$$ for  all localized $x, y$ in $\ast_\bn\ga$ supported in subsets $F_x, F_y$ with $F_x \cap F_y =\emptyset$;
\item[-] satisfy the \emph{ordered product condition} if 
$$\varphi(xy)=\varphi(x)\varphi(y),$$ for  all localized $x, y$ in $\ast_\bn\ga$ supported in subsets $F_x, F_y$ with either $F_x < F_y$ or $F_x > F_y$.
\end{itemize}
\end{defin}
\begin{rem}

For an exchangeable state the two notions of being an ordered product state or a full product state trivially agree.\\
We would like to point out that $\pi_{\varphi_\bz}$-attractivity is a stronger form of strongly clustering, for it is equivalent to
 $
 \lim_{n\rightarrow\infty}\varphi(x^*\tau^n(y)z)=
\varphi(y) \varphi(x^*z)\, ,
$
for all $x, y, z$ in $\ast_\bn\ga$, {\it cf.} the proof of Theorem \ref{01lawspread}.\\
We end this remark by noting  that the full/ordered product conditions are natural extensions of the factorization of momenta in the classical setting. It is well-known that all forms of non-commutativity satisfy these two properties.
\end{rem}
We start with the hierarchy for the exchangeable case. The spreadable one is provided in Theorem 	\ref{ergohierspread}.

\begin{thm}\label{ergohierexchange}
Let $\varphi$ be an exchangeable state on $\ast_\bn\ga$. Then:

\begin{itemize}
\item [(i)] $\varphi$ is extreme in $\cs^{\bp_\bn}(\ast_\bn\ga)$  $\Leftrightarrow$ $\{\pi_\varphi (\ast_\bn\ga), U_\s^\varphi: \s\in\bp_\bn \}'=\mathbb{C}$;
\item [(ii)] ${\rm dim}\,E_\varphi=1$ $\Leftrightarrow$ $\varphi$ is strongly clustering $\Leftrightarrow$ $\varphi$ is a full product state
\item [(iii)] $\mathcal{T}_\varphi=\bc$ $\Leftrightarrow$ $\varphi$ is block-singleton $\Leftrightarrow$ $\varphi$ is $\pi_{\varphi_{\bz}}$-attractive;
\item [(iv)] $\mathcal{R}_\varphi^{\bp_\bn}=\bc$  $\Leftrightarrow$ $\varphi$ is extreme in $\cs^{\bp_\bn}(\ast_\bn\ga)$ and has central support.
\end{itemize}
Furthermore, (iv) $\Rightarrow$ (iii) $\Rightarrow$ (ii) $\Rightarrow$ (i).
\end{thm}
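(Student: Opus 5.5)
I will prove the four characterizations separately and then the chain of implications, using the earlier results wherever possible.

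\textbf{Item (i).} This is the standard ergodic decomposition fact for a group action. $\varphi$ is extreme among $\bp_\bn$-invariant states if and only if the commutant of $\pi_\varphi(\ast_\bn\ga)\cup\{U_\s^\varphi\}$ is trivial. To see this, write an invariant state $\om\le M\varphi$ as $\langle\pi_\varphi(\cdot)S\xi_\varphi,\xi_\varphi\rangle$ with $S\in\pi_\varphi(\ast_\bn\ga)'$, as in Corollary \ref{faces}. Invariance of $\om$ forces $S$ to commute with the $U_\s$, and conversely such an $S$ produces an invariant decomposition. This is essentially Sakai/Bratteli--Robinson, which I will cite.

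\textbf{Item (ii).} First, $\dim E_\varphi=1$ gives strong clustering. By the mean ergodic theorem and Theorem \ref{HSspreadonL2}, $V_\tau^n$ converges weakly to $E_\varphi$. This is the delicate point, since only Ces\`aro convergence is automatic for a general isometry. However, $\langle V_\tau^n\pi_\varphi(y)\xi_\varphi,\pi_\varphi(x^*)\xi_\varphi\rangle=\varphi(x\tau^n(y))$ is eventually constant in $n$ by spreadability once $\tau^n(y)$ is supported to the right of $x$. So its limit equals its Ces\`aro limit, which is $\langle E_\varphi\pi_\varphi(y)\xi_\varphi,\pi_\varphi(x^*)\xi_\varphi\rangle=\varphi(x)\varphi(y)$, and density extends this to all $x,y$.

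Next, strong clustering gives the full product condition. For localized $x,y$ with disjoint supports, exchangeability lets me move the support of $y$ far to the right without changing $\varphi(xy)$. Hence $\varphi(xy)=\lim_n\varphi(x\tau^n(y))=\varphi(x)\varphi(y)$.

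Finally, the full product condition gives $\dim E_\varphi=1$. It implies $\varphi(x\tau^n(y))=\varphi(x)\varphi(y)$ for large $n$, so the Ces\`aro means give $\langle E_\varphi\pi_\varphi(y)\xi_\varphi,\pi_\varphi(x^*)\xi_\varphi\rangle=\varphi(x)\varphi(y)$. Therefore $E_\varphi$ is the projection onto $\bc\xi_\varphi$. This is presumably the Proposition \ref{ergospread} already invoked in the proof of Theorem \ref{01lawspread}.

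\textbf{Item (iii).} This is Theorem \ref{01lawspread} specialized to exchangeable states, using that block-singleton and forward-decoupling coincide there.

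\textbf{Item (iv).} This is the main step. Suppose $\mathcal{R}_\varphi^{\bp_\bn}=\bc$.
\begin{itemize}
\item The commutant in (i) is contained in the fixed-point algebra, so it is trivial and $\varphi$ is extreme.
\item For central support, I will invoke the argument quoted in Example \ref{tensprodex}. By \cite[Theorem 4.3.20]{BR1}, an asymptotically abelian-type group action (finite permutations act asymptotically abelianly on localized elements of the free product, in the weak sense) with trivial fixed-point algebra in $\mathcal{R}_\varphi$ forces the invariant vector state to be separating, i.e.\ central support.
\end{itemize}
Conversely, suppose $\varphi$ has central support. Then by the remark after Proposition \ref{restriction}, $\Phi:\mathcal{R}_{\varphi_\bz}\to\mathcal{R}_\varphi$ is an isomorphism intertwining the permutation actions. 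By Theorem \ref{HSonZ}, $\mathcal{R}_\varphi^{\bp_\bn}\cong\mathcal{T}_\varphi$. Since $\xi_\varphi$ is separating, the map $T\mapsto E_\varphi TE_\varphi$ used in the proof of Theorem \ref{01lawspread} is injective on $\mathcal{T}_\varphi$. Extremality then needs to give $\dim E_\varphi=1$. I plan to get this by showing that an element of the commutant in (i) arises from $E_\varphi$: with central support, $E_\varphi$ lies in the appropriate commutant, via the modular/Radon--Nikodym argument of Corollary \ref{faces}, where fixed vectors correspond to invariant states dominated by $\varphi$. Together with injectivity, this yields $\mathcal{T}_\varphi=\bc$. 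I expect this direction, and getting the correct form of \cite[4.3.20]{BR1}, to be the main obstacle.

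\textbf{The chain.} Under central support, (iv) gives $\mathcal{R}_\varphi^{\bp_\bn}\supseteq\mathcal{T}_\varphi$, so (iv)$\Rightarrow$(iii). For (iii)$\Rightarrow$(ii), block-singleton with $x=1$ gives the full product condition. For (ii)$\Rightarrow$(i), a positive $S$ in the commutant of (i) gives $S\xi_\varphi$ fixed by $V_\tau$, as in Corollary \ref{faces}. Hence $S\xi_\varphi\in\bc\xi_\varphi$, and cyclicity of $\xi_\varphi$ for $\pi_\varphi(\ast_\bn\ga)$, which $S$ commutes with, forces $S\in\bc$.
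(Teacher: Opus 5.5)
Your treatment of (i)--(iii) and of the chain (iv)$\Rightarrow$(iii)$\Rightarrow$(ii)$\Rightarrow$(i) is sound. For (iv)$\Rightarrow$(iii), the inclusion $\mathcal{T}_\varphi\subseteq\mathcal{R}_\varphi^{\bp_\bn}$ holds with no support assumption, so your argument is shorter than the paper's, which goes through faithfulness and purity of $\widetilde{\varphi}$ on $\mathcal{T}_\varphi$. The trouble is item (iv). In the forward direction your first bullet is false. The algebra $\{\pi_\varphi(\ast_\bn\ga),U_\sigma\}'=\mathcal{R}_\varphi'\cap\{U_\sigma\}'$ lies in the commutant $\mathcal{R}_\varphi'$, not in $\mathcal{R}_\varphi^{\bp_\bn}\subseteq\mathcal{R}_\varphi$, so triviality of the latter says nothing directly about the former. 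Extremality still follows, for example as the paper does it: $\ch_\varphi^{\bp_\bn}=\overline{\mathcal{T}_\varphi\xi_\varphi}\subseteq\overline{\mathcal{R}_\varphi^{\bp_\bn}\xi_\varphi}=\bc\xi_\varphi$ by Theorem \ref{HSspreadonL2}, and then your (ii)$\Rightarrow$(i) applies. For central support you need no asymptotic abelianness. The projection onto $\overline{\mathcal{R}_\varphi'\xi_\varphi}$ lies in $\mathcal{R}_\varphi$ and is fixed by every $U_\sigma$, hence equals $I$.

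The genuine gap is the converse, extreme with central support $\Rightarrow$ $\mathcal{R}_\varphi^{\bp_\bn}=\bc$. You leave it open, and the mechanism you sketch cannot work. First, $E_\varphi$ commutes with the $U_\sigma$ but not with $\pi_\varphi(\ast_\bn\ga)$, so it is not in the commutant of (i). Second, the Radon--Nikodym argument of Corollary \ref{faces} only turns invariant states dominated by $\varphi$ into fixed vectors $S\xi_\varphi$; it does not turn fixed vectors into commutant elements. Example \ref{constantprocess} shows that extremality alone does not force $\dim E_\varphi=1$. So central support must be used to move elements of $\mathcal{R}_\varphi$ into $\mathcal{R}_\varphi'$. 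The missing tool is the modular conjugation $J$ of $(\mathcal{R}_\varphi,\xi_\varphi)$. Each $U_\sigma$ fixes $\xi_\varphi$ and normalizes $\mathcal{R}_\varphi$, so it commutes with the Tomita operator $x\xi_\varphi\mapsto x^*\xi_\varphi$, and hence with $J$. This gives $J\mathcal{R}_\varphi^{\bp_\bn}J\subseteq\mathcal{R}_\varphi'\cap\{U_\sigma\}'=\bc$ by (i); this is what the paper obtains by citing \cite[Theorem 4.3.20]{BR1}. The same argument makes your detour through $\mathcal{R}_\varphi^{\bp_\bn}\cong\mathcal{T}_\varphi$ unnecessary. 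That identification is not what Theorem \ref{HSonZ} gives: the theorem concerns the $\bp_\bz$-fixed points of $\mathcal{R}_{\varphi_\bz}$, while $\Phi$ only intertwines the $\bp_\bn$-actions, so it would need a separate argument.
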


\begin{proof}
The equivalence in $(i)$ is a well-known general fact, see \emph{e.g.} \cite[Theorem 4.3.20]{BR1}.
The equivalences in $(ii)$ are the content of Proposition \ref{ergospread}. The equivalences in $(iii)$ are Theorem \ref{01lawspread}.\\
The equivalence in $(iv)$ has to be proved. In one direction,  if $\mathcal{R}_\varphi^{\bp_\bn}=\bc$, then $\xi_\varphi$ is a separating vector
for $\mathcal{R}_\varphi$ by \cite[Theorem 4.3.20]{BR1} and $\ch_\varphi^{\bp_\bn}=\overline{ \mathcal{T}_\varphi \xi_\varphi}\subseteq \overline{\mathcal{R}_\varphi^{\bp_\bn}\xi_\varphi}=\bc\xi_\varphi$ by Theorem \ref{HSspreadonL2}, so $E_\varphi$ is a one-dimensional projection and {\it a fortiori} $\varphi$ is  extreme by {\it e.g.} \cite[Theorem 4.3.20]{BR1}.
The other implication follows again by \cite[Theorem 4.3.20]{BR1}.\\
The implication $(iv)\Rightarrow (iii)$ can be proved in the following way. The restriction of $\widetilde{\varphi}$, the vector state on $\cb(\ch_\varphi)$ corresponding to $\xi_\varphi$, to $\mathcal{T}_\varphi$ is a faithful normal state, which is also pure by
Corollary \ref{purerestriction}. Let $\ch_{\widetilde{\varphi}}$ denote the GNS Hilbert space of $(\mathcal{T}_\varphi,\widetilde{\varphi})$. Then $\mathcal{T}_\varphi$ is $*$-isomorphic with $\cb(\ch_{\widetilde{\varphi}})$, and
$\ch_{\widetilde{\varphi}}$ must be a one-dimensional Hilbert space since $\cb(\ch_{\widetilde{\varphi}})$ has a separating vector, the GNS vector of $\widetilde{\varphi}$.\\
The implication $(iii)\Rightarrow (ii)$ is trivial as the block singleton property clearly implies the full product property.\\
Finally, the implication $(ii)\Rightarrow (i)$ is a well-known general fact, \cite[Theorem 4.3.20]{BR1}.
\end{proof}

\begin{rem}\label{nocentralsupport}
In the light of the previous result, the validity of a strong form of the Hewitt-Savage law that in the GNS representation $\pi_\varphi$ of an exchangeable state $\varphi$ on $\ast_\bn \ga$ the exchangeable algebra $\mathcal{R}_{\varphi}^{\bp_\bn}$ and the tail algebra
are both trivial only holds for states with central support. \\
The notion of central support also clarifies the scope of the following result from the literature:
Theorem 3.2 in \cite{ACGL} claims the equivalence for an exchangeable state on $\ast_\bz \ga$ between having trivial stationary algebra (precisely $\{T\in\pi_{\varphi_\bz}(\ast_\bz\ga)'': U_\tau T U_\tau^*=T\}$ is trivial)
and satisfying the block-singleton condition. However, the first condition implies that the state has central support by \cite[Theorem 4.3.20]{BR1}, whereas no such consequence on the support can be drawn from the block-singleton condition, see e.g. Example \ref{tensprodex}.
\end{rem}

The following is the full hierarchy for spreadable distributions.

\begin{thm}\label{ergohierspread}
Let $\varphi$ be a spreadable state on $\ast_\bn\ga$. Then:

\begin{itemize}
\item [(i)] $\varphi$ is extreme in $\cs^{\bj_\bn}(\ast_\bn\ga)$ $\Leftrightarrow$  $\varphi$ is extreme in $\cs^{\tau}(\ast_\bn\ga)$ $\Leftrightarrow$ $\{\pi_\varphi (\ast_\bn\ga), V_\tau  \}'=\mathbb{C}$;
\item [(ii)] ${\rm dim}\,E_\varphi=1$ $\Leftrightarrow$ $\varphi$ is strongly clustering $\Leftrightarrow$ $\varphi$ is an ordered product state
$\Leftrightarrow$ $\varphi$ is a full product state;
\item [(iii)] $\mathcal{T}_\varphi=\bc$ $\Leftrightarrow$ $\varphi$ is forward-decoupling $\Leftrightarrow$ $\varphi$ is $\pi_{\varphi_\bz}$-attractive;
\item [(iv)]   $\varphi$ is extreme in $\cs^{\bj_\bn}(\ast_\bn\ga)$ and has central support. 
\end{itemize}
Furthermore, (iv) $\Rightarrow$ (iii) $\Rightarrow$ (ii) $\Rightarrow$ (i).
\end{thm}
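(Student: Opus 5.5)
We prove Theorem \ref{ergohierspread} level by level and then the chain of implications, mirroring the proof of Theorem \ref{ergohierexchange}.

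For (i), Corollary \ref{faces} already gives that $\cs^{\bj_\bn}(\ast_\bn\ga)$ is a face of $\cs^{\tau}(\ast_\bn\ga)$. Hence extremality in the two sets agrees. For the commutant characterization we use the standard Radon--Nikodym argument from the proof of Corollary \ref{faces}. A stationary state $\om\leq M\varphi$ corresponds to a positive $S\in\pi_\varphi(\ast_\bn\ga)'$ with $V_\tau^*S\xi_\varphi=S\xi_\varphi$. Conversely, positive elements of $\{\pi_\varphi(\ast_\bn\ga),V_\tau\}'$ yield dominated stationary states. Extremality therefore amounts to triviality of the relevant positive cone, and hence of the commutant, as in \cite[Theorem 4.3.20]{BR1} adapted to an isometry in place of a unitary.

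For (ii), the equivalences are those of Proposition \ref{ergospread}, which we take as available. The one point to check explicitly is that the ordered product and full product conditions coincide under spreadability. Given disjoint supports $F_x,F_y$, the two sets may interlace, so we cannot directly apply an order-preserving map that separates them. Instead we argue through ${\rm dim}\,E_\varphi=1$. If $F_x<F_y$ (or $F_x>F_y$), the value $\varphi(xy)$ equals $\varphi(x\tau^n(y))$ for large $n$ by spreadability, and this tends to $\varphi(x)\varphi(y)$ by strong clustering. The interlaced case is handled by the equivalence with ${\rm dim}\,E_\varphi=1$ in Proposition \ref{ergospread}: one writes $\varphi(xy)=\langle \pi_\varphi(y)\xi_\varphi,\pi_\varphi(x^*)\xi_\varphi\rangle$ and averages with the isometries $V_h$, $h\in\bj_\bn$, that fix $F_x$ and spread $F_y$. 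The main obstacle of the theorem is this step: producing enough elements of $\bj_\bn$ to reduce full products to the invariant-vector statement of Theorem \ref{HSspreadonL2}. I would try first to use $\ch_\varphi^{\bj_\bn}=\ch_\varphi^\tau=\bc\xi_\varphi$ together with a mean ergodic argument over the partial shifts $\theta_h$. Level (iii) is exactly Theorem \ref{01lawspread}.

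For the chain of implications we proceed as follows.
\begin{itemize}
\item[] (iii)$\Rightarrow$(ii): take $z=1$ in the forward-decoupling condition to get $\varphi(xy)=\varphi(x)\varphi(y)$ whenever $F_y>F_x$. Taking $x=1$ gives the other ordering. Hence $\varphi$ is an ordered product state.
\item[] (ii)$\Rightarrow$(i): ${\rm dim}\,E_\varphi=1$ forces the commutant $\{\pi_\varphi(\ast_\bn\ga),V_\tau\}'$ to be trivial. An element $S$ in it satisfies $S\xi_\varphi\in\ch_\varphi^\tau=\bc\xi_\varphi$, and $\xi_\varphi$ is cyclic for $\pi_\varphi(\ast_\bn\ga)$, so $S$ is a scalar.
\item[] (iv)$\Rightarrow$(iii): central support makes $\xi_\varphi$ separating for $\mathcal{R}_\varphi$. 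By Corollary \ref{purerestriction}, $\widetilde{\varphi}\upharpoonright_{\mathcal{T}_\varphi}$ is pure and normal, and it is faithful because $\xi_\varphi$ is separating. The same argument as in Theorem \ref{ergohierexchange} then applies: a von Neumann algebra with a faithful normal pure state is $\bc$. Hence $\mathcal{T}_\varphi=\bc$.
\end{itemize}
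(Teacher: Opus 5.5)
\textbf{There is a genuine gap in level (ii).} You never prove that an ordered product state (equivalently ${\rm dim}\,E_\varphi=1$) satisfies the full product condition when the supports interlace, and the mechanism you propose cannot give it. Take $F_x=\{1,3\}$ and $F_y=\{2\}$. A strictly increasing $h$ fixing $F_x$ pointwise must have $1<h(2)<3$, so $h(2)=2$. Symmetrically, fixing $F_y$ forces $h(1)\le 1$. So no element of $\bj_\bn$ fixes one support while spreading the other. Iterating a single partial shift $\theta_h$ does not help either, because it moves everything above its threshold as one block and so never separates interlaced supports. What is missing is a way to spread $x$ and $y$ \emph{independently}. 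This is K\"ostler's ergodic theorem, which the paper reproves in $L^2$ form without faithfulness (Proposition \ref{ergoKostler}): $T_N=\prod_{k=0}^N V_k^{kN}M_k^{(N)}\to E_\varphi$ strongly, where $V_k$ implements $\theta_k$ and $M_k^{(N)}$ is the Ces\`aro average of $V_k$. Its proof telescopes $T_N-E_\varphi$ into finitely many terms, each killed by the mean ergodic theorem for one $V_k$, using $V_NV_{N-1}=V_{N-1}^2$.

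\textbf{How this closes the gap, and what else is missing.} Expanding gives $T_N=N^{-(N+1)}\sum_{\mathbf l}V_{\theta_{N,\mathbf l}}$, where $\theta_{N,\mathbf l}=\theta_0^{l_0}\theta_1^{N+l_1}\cdots\theta_N^{N^2+l_N}$ and $\mathbf l=(l_0,\dots,l_N)\in\{0,\dots,N-1\}^{N+1}$. These maps satisfy $\theta_{N,\mathbf l}(i)<\theta_{N,\mathbf k}(j)$ for \emph{all} $\mathbf l,\mathbf k$ whenever $i<j<N$. Hence, for disjoint and possibly interlaced $F_x,F_y\subset\{0,\dots,N-1\}$, the restrictions $\theta_{N,\mathbf l}\upharpoonright_{F_x}$ and $\theta_{N,\mathbf k}\upharpoonright_{F_y}$ glue to a single $g\in\bj_\bn$, giving $\alpha_{\theta_{N,\mathbf l}}(x)\alpha_{\theta_{N,\mathbf k}}(y)=\alpha_g(xy)$. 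Now take $\eta,\zeta\in E_\varphi\ch_\varphi$ and write $\langle\pi_\varphi(x)E_\varphi\pi_\varphi(y)\eta,\zeta\rangle=\langle E_\varphi\pi_\varphi(y)\eta,E_\varphi\pi_\varphi(x^*)\zeta\rangle$. Replace both projections by $T_N$. Then use $V_h\pi_\varphi(w)\eta=\pi_\varphi(\alpha_h(w))\eta$ for invariant $\eta$, together with $V_g^*\zeta=\zeta$, to obtain $\langle\pi_\varphi(xy)\eta,\zeta\rangle$. This is Lemma \ref{insertproj}, $E_\varphi\pi_\varphi(xy)E_\varphi=E_\varphi\pi_\varphi(x)E_\varphi\pi_\varphi(y)E_\varphi$, and the full product condition follows at once from ${\rm dim}\,E_\varphi=1$.

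The same identity for invariant vectors is what your level (i) leaves implicit. ``Adapted to an isometry'' is not automatic: $\mathrm{ad}\,V_\tau$ does not implement $\tau$, so $V_\tau SV_\tau^*$ need not remain in $\pi_\varphi(\ast_\bn\ga)'$, and the unitary-group argument breaks down. The paper uses $V_\tau\pi_\varphi(w)\eta=\pi_\varphi(\tau(w))\eta$ to show that the Radon--Nikodym derivative $S$ commutes with $V_\tau$. That is exactly what the direction ``trivial commutant $\Rightarrow$ extreme'' requires.

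\textbf{The remaining steps are sound.} Your (iii)$\Rightarrow$(ii) and (iv)$\Rightarrow$(iii) agree with the paper. Your (ii)$\Rightarrow$(i) via $S\xi_\varphi\in\ch_\varphi^\tau=\bc\xi_\varphi$ plus cyclicity is a correct and shorter alternative to the paper's route through Corollary \ref{purerestriction}. Applied directly to the Radon--Nikodym derivative of a dominated stationary state, it even avoids the missing commutation step.
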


\begin{proof}
For the equivalences in $(i)$ note that by Corollary \ref{faces} extremality in  $\cs^{\bj_\bn}(\ast_\bn\ga)$ is the same as extremality
in $\cs^{\tau}(\ast_\bn\ga)$.\\
We can then move on to show that $\varphi$ is extreme in $\cs^{\tau}(\ast_\bn\ga)$ $\Leftrightarrow$ $\{\pi_\varphi (\ast_\bn\ga), V_\tau  \}'=\mathbb{C}$. The implication $\varphi$ is extreme in $\cs^{\tau}(\ast_\bn\ga)$ $\Rightarrow$ $\{\pi_\varphi (\ast_\bn\ga), V_\tau  \}'=\mathbb{C}$ is straightforward (for any non-scalar positive $T$ in the commutant $\{\pi_\varphi (\ast_\bn\ga), V_\tau  \}'$ gives rise to a state properly dominated by $\varphi$).\\
For the reverse implication, if $\om$ is dominated by $\varphi$, then
$\om(x)=\langle \pi_\varphi(x)T\xi_\varphi, \xi_\varphi \rangle$, $x$ in $\ast_\bn\ga$, for a suitable
positive $T$ in $\pi_\varphi(\ast_\bn\ga)'$ by the non-commutative Radon-Nikodym theorem. Therefore, the conclusion will be reached as soon as we showh that $T$ commutes with $V_\tau$. By the same argument as in the proof
of Corollary \ref{faces} one sees that $V_\tau T\xi_\varphi= T\xi_\varphi$.
Now the equality $V_\tau\pi_\varphi(x)\eta= \pi_\varphi(\tau(x))\eta$ holds for all $x$ in $\ast_\bn\ga$, if $\eta$ is an invariant vector (that is if $V_\tau\eta=\eta$ ). Indeed, let $\{x_n:n\in\bn\}$ be a sequence in $\ast_\bn\ga$ such that $\eta=\lim_n \pi_\varphi(x_n)\xi_\varphi$.
Because $\eta$ is invariant, we also have $\eta=\lim_n \pi_\varphi(\tau(x_n))\xi_\varphi$. But then
\begin{align*}
V_\tau\pi_\varphi(x)\eta&=\lim_n V_\tau \pi_\varphi(xx_n)\xi_\varphi=\lim_n \pi_\varphi(\tau(x))\pi_\varphi(\tau(x_n))\xi_\varphi\\
&=\pi_\varphi(\tau(x))\eta\, .
\end{align*}
The sought commutation between $T$ and the isometrie $V_\tau$ can now be derived easily in the following way. For all
$x$ in $\ast_\bn\ga$, we have:
\begin{align*}
V_\tau T\pi_\varphi(x)\xi_\varphi&= V_\tau\pi_\varphi(x) T\xi_\varphi=\pi_\varphi(\tau(x))T\xi_\varphi=
T\pi_\varphi(\tau(x))\xi_\varphi\\
&=TV_\tau \pi_\varphi(x)\xi_\varphi\, ,
\end{align*}
which shows $V_\tau T=T V_\tau$ as $\pi_\varphi(\ast_\bn\ga)\xi_\varphi$ is a dense subspace of $\ch_\varphi$.\\
The equivalence of the first three conditions in (ii) is the content of Proposition \ref{ergospread}. The full product state condition obviously implies the ordered product state condition. The converse implication is non-trivial and requires the discussion in the appendix culminating in Lemma \ref{insertproj}. Using Lemma \ref{insertproj}, the full product state condition follows immediately from the condition 
${\rm dim}\, E_\varphi=1$.\\
The equivalences in $(iii)$ have been proved in Theorem \ref{01lawspread}.\\
The chain of implications (iv) $\Rightarrow$ (iii) $\Rightarrow$ (ii)  can be proved in the exact 
same way as we did Theorem  \ref{ergohierexchange}. The implication (ii) $\Rightarrow$ (i) requires further discussion because the one in Theorem \ref{ergohierexchange} relied on the fact that  $\bp_\bn$ is a group acting by automorphisms.
By Corollary \ref{purerestriction}, showing that $\varphi$ is extreme in $\cs^{\bj_\bn}(\ast_\bn\ga)$ amounts to show that $\widetilde{\varphi}$  is a pure state on $\mathcal{T}_\varphi$,
where $\widetilde{\varphi}(T):=\langle T\xi_\varphi ,\xi_\varphi \rangle$ for  $T\in \mathcal{T}_\varphi$. Theorem \ref{HSspreadonL2} implies that $E_\varphi \ch_\varphi$ is an invariant space for $\mathcal{T}_\varphi$ and that the GNS representation of $(\mathcal{T}_\varphi, \widetilde{\varphi} )$ is given by the action of $\mathcal{T}_\varphi$ on $E_\varphi \ch_\varphi$. Consequently, by the hypothesis in (ii), $E_\varphi \ch_\varphi$ is one-dimensional, meaning that $\widetilde{\varphi}$ is a pure state.
\end{proof}

\begin{rem}
None of the implications (iv) $\Rightarrow$ (iii) $\Rightarrow$ (ii) $\Rightarrow$ (i) is an equivalence.\\
The states associated with constant processes that we discuss below in Example \ref{constantprocess} are extreme
but ${\rm dim} E_\varphi$ can be as large as wished as long as a suitable sample algebra is chosen, meaning (i) does not imply (ii).
As we shall see in Section \ref{models}, product states on twisted tensor products  which are not gauge-invariant
feature a non-trivial tail algebra, see Remark \ref{tailGinv} and Theorem \ref{isotailtorus}, meaning (ii) does not imply (iii).\\
Finally, Example \ref{tensprodex} 
showcases that (iii) does not imply (iv).
\end{rem}

\begin{example}\label{constantprocess}
Let $\ga$ be any non-commutative unital $C^*$-algebra and let $\om$ be a pure state of $\ga$ whose (irreducible) GNS representation
acts on a Hilbert space $\ch_\om$ with ${\rm dim} \ch_\om\geq 2$. Consider the corresponding state $\varphi$ on  the free product $\ast_\bn\ga$  determined by
$$\varphi(i_{i_1}(a_1)\cdots i_{i_k}(a_k))=\langle\pi_\om(a_1)\cdots \pi_\om(a_k) \xi_\om, \xi_\om\rangle\, $$
for all $k\in\bn, i_1, \ldots, i_k\in\bn, a_1, \ldots, a_k\in\ga$.\\
The state $\varphi$ is obviously exchangeable. Moreover, it is extreme since it is even pure, in that its GNS representation is irreducible.
Note that $E_\varphi= I$ in $\cb(\ch_\om)$ and $\mathcal{T}_\varphi=\cb(\ch_\om)$ because, for each $n$ in $\bn$, we clearly have  $(\bigcap_{k\geq n} \iota_k(\ga))''=\pi_\om(\ga)''=\cb(\ch_\om)$.\\
Also note that  any such $\varphi$  as above is possibly the simplest  conceivable example of an extreme exchangeable state whose projection $E_\varphi$ is not one-dimensional.  
\end{example}
 In the light of the results we have gleaned so far it is suggestive to think of a spreadable process
satisfying any of the three equivalent conditions in our  Hewitt-Savage 0--1 law as an identically distributed, independent (i.i.d. for short) process.\\
This way of construing the i.i.d. condition becomes particularly sharp when applied to the five non-commutative natural forms of independence, which, as we will show over the paper, can be obtained by combining this i.i.d. condition with the appropriate specific distributional symmetry (w.r.t. the action/coaction of suitable compact groups/quantum groups and semigroups) or, for tensor independence, with the request that the distribution factorizes through a suitable quotient.
As we will see, these characterizations are got to by carefully relying on the literaure of the field.\\
In the remaining part of this section, tensor independence and free independence are dealt with.
We recall that the notion of a distribution factorizing through a quotient was given in Definition \ref{distri}.

\begin{prop}\label{tensor}
For a quantum stochastic process with sample algebra $\ga$  and with distribution $\varphi$, the following are equivalent:
\begin{itemize}
\item [(i)] the process is identically distributed and tensor independent;
\item [(ii)] $\varphi$ is spreadable, satisfies any of the three conditions of Theorem \ref{01lawspread}, and factorizes through the infinite maximal tensor product $\otimes_\bn\ga$.
\end{itemize} 
\end{prop}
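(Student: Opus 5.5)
The plan is to show that both conditions are equivalent to the single concrete statement $\varphi=(\otimes_\bn\om)\circ p$, where $\om:=\varphi\circ i_1$ is the common one-site distribution and $p:\ast_\bn\ga\rightarrow\otimes_\bn\ga$ is the canonical quotient map onto the maximal tensor product. Once this is established, each direction is a short computation on monomials $i_{j_1}(a_1)\cdots i_{j_m}(a_m)$, whose linear span is norm dense in $\ast_\bn\ga$.

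For (i)$\Rightarrow$(ii), I will induct on the length $m$ of the monomial, using the two rules of Definition \ref{freetens}. If the first index $j_1$ does not reappear later, the first rule splits off the factor $\varphi(\iota_{j_1}(a_1))=\om(a_1)$. If $j_1=j_r$ for some $r>1$, the second rule moves $a_1$ next to $a_r$ and merges them into $a_1a_r$, which shortens the word. The outcome is
$$\varphi(i_{j_1}(a_1)\cdots i_{j_m}(a_m))=\prod_{k}\om\Big(\prod_{s:\,j_s=k}^{\rightarrow}a_s\Big),$$
where each inner product keeps the original left-to-right order of the letters sitting at site $k$. The right-hand side is exactly $(\otimes\om)(p(\cdot))$ evaluated on the monomial. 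Hence $\varphi$ factorizes through $\otimes_\bn\ga$. It is also permutation invariant, hence exchangeable and in particular spreadable. Finally, $\varphi$ is block-singleton by Corollary \ref{freeprod}, which is condition (ii) of Theorem \ref{01lawspread} in its exchangeable form.

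For (ii)$\Rightarrow$(i), write $\varphi=\psi\circ p$ with $\psi$ a state on $\otimes_\bn\ga$. Inside the tensor product, elements placed at different sites commute. So $p$ of any monomial equals $x_{k_1}x_{k_2}\cdots x_{k_r}$ with $k_1<\dots<k_r$, where $x_k$ is the ordered product of the letters at site $k$, viewed in the $k$-th tensor factor. It therefore suffices to show
$$\psi(x_{k_1}\cdots x_{k_r})=\prod_l \om(x_{k_l}).$$
To get this, I will invoke Theorem \ref{ergohierspread}, specifically the implication (iii)$\Rightarrow$(ii): any of the three equivalent conditions of Theorem \ref{01lawspread} makes $\varphi$ an ordered product state. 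Lifting $x_{k_1}$ to $i_{k_1}(\cdot)$, whose support lies strictly to the left of the remaining word, peels off one factor $\varphi(i_{k_1}(x_{k_1}))$. Iterating this removes every factor. Stationarity, which spreadability implies, then gives $\varphi\circ i_k=\om$ for every $k$, so the process is identically distributed. With the product formula in hand, both tensor-independence rules of Definition \ref{freetens} are immediate: the first is the factorization of an isolated site, the second is the merging of letters at a repeated site, valid because the letters of other sites commute past each other in the quotient.

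I expect the main obstacle to be bookkeeping rather than substance. In (i)$\Rightarrow$(ii), the induction has to show that the two rules of Definition \ref{freetens} determine all moments unambiguously, so that the resulting state is precisely $(\otimes\om)\circ p$. One should check in particular that the second rule never reorders letters sitting at the same site. In (ii)$\Rightarrow$(i), the point to watch is that the ordered product condition is applied to genuinely localized lifts in $\ast_\bn\ga$, not to elements of the quotient, and that ordering the sites increasingly is what lets the ordered product condition, rather than the full product condition, suffice.
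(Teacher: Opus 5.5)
Your proof is correct. It is organized differently from the paper's, and it is more complete.

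You route both directions through the single identity $\varphi=(\otimes_\bn\om)\circ p$ with $\om=\varphi\circ i_1$. The paper does not do this for (i)$\Rightarrow$(ii). There it argues directly that $\varphi$ annihilates the ideal $I=\ker p$. It reduces this to checking $\varphi((xyz-yxz)^*(xyz-yxz))=0$ for localized $x,y,z$ with $x,y$ disjointly supported, and leaves that check as an ``easy yet tedious calculation''. It also asserts spreadability without argument.

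Your explicit moment formula gives factorization through $\otimes_\bn\ga$, permutation invariance and hence spreadability all at once. The price is invoking the standard existence of the product state $\otimes_\bn\om$ on the maximal tensor product, which is harmless. In the merging step, take $r$ to be the \emph{smallest} index $>1$ with $j_r=j_1$. That choice is what keeps the letters at one site in order, as you anticipated.

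For (ii)$\Rightarrow$(i) the paper has your outline but simply states that the state on $\otimes_\bn\ga$ is a product state $\otimes\om$. Your argument supplies exactly this missing step, using the ordered product factorization together with stationarity. You do not need Theorem \ref{ergohierspread} for it: the factorization $\varphi(xy)=\varphi(x)\varphi(y)$ for $F_x<F_y$ is just forward-decoupling with $z=1$.
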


\begin{proof}
We start by proving that (i) implies (ii). If (i) holds, then $\varphi$ is spreadable and by Corollary \ref{freeprod}  it satisfies 
the three conditions of the non-commutative Hewitt-Savage 0--1 law. All is left to do is make sure that $\varphi$ factorizes 
through the infinite tensor product, that is $\varphi(I)=0$ if $I\subset \ast_\bn\ga$ is the two-sided ideal such that
$\ast_\bn\ga/ I \cong\otimes_\bn\ga$, and a moment's reflection shows that this amounts to showing that
$\varphi ((xyz-yxz)^*(xyz-yxz))=0$ for all localized $x, y, z$ such that $x$ and $y$ have disjoint supports.
This equality can be derived by applying the definition of tensor independence, Definition \ref{freetens}, and making some  easy yet tedious calculations.\\
As for the converse implication,  a state $\varphi$ satisfying (ii) is of the form $\varphi=\varphi'\circ p_I$, where $\varphi'=\otimes\om$ is a  tensor product state of
$\otimes_\bn\ga$, for some state $\om$ on the sample algebra $\ga$, and tensor product states of the infinite tensor product are seen at once to be tensor independent.
\end{proof}

\begin{rem}\label{free}
As is known, the infinite free product is naturally coacted upon by  the compact quantum groups of  so-called quantum permutations.  
The corresponding invariant states are known as the quantum exchangeable states. These are the main focus of the influential paper
\cite{KS} insofar as the symmetry they enjoy enables one to characterize (conditional) free independence.
The scope of that paper was later widened in \cite{DKW} by allowing for not necessarily faithful states and for  sample algebras not necessarily singly generated.
For definitions and precise statements of the results we refer the reader to \cite{DKW}.\\
What we aim to stress here is that a state is identically distributed, free independent if and only if it is quantum symmetric and satisfies any of the three conditions in the non-commutative Hewitt-Savage 0--1 law, Theorem \ref{01lawspread}.
Indeed, one implication is the content of
Corollary \ref{freeprod} and the well-known fact that free product states $\ast_{\bn}\om$ are quantum symmetric.
For the other,  if the state is quantum symmetric, then by Theorem 6.1 in \cite{DKW} it is conditionally
free independent w.r.t. the tail algebra, which is trivial under our assumptions. Clearly, conditional free independence w.r.t. a trivial tail algebra is just free indepenence.
\end{rem}

\begin{rem}\label{pasttail}
We would like to point out that in the GNS representation of $(\ast_\bz \ga,\varphi_\bz)$ a past tail algebra
$\mathcal{T}_{\varphi_\bz}^-$ can also be considered by defining it as 
$$\mathcal{T}_{\varphi_\bz}^-:=\bigcap_{n\in\bn}\{\pi_{\varphi_\bz}(i_k(\ga)): k\leq -n\}''\, .$$
The analysis we carried out for the tail algebra can be conducted for the past tail algebra as well provided that
one works entirely in the GNS representation $\pi_{\varphi_\bz}$ (here thought of as a representation of $\ast_\bz \ga$). In particular, one can prove the existence of a unique normal conditional expectation
from  $\pi_{\varphi_\bz}(\ast_{k=n}^{ -\infty} \ga)''$ (for any fixed $n$)
onto the past tail algebra satisying the same factorization and  invariance properties as the conditional expectation  onto the tail algebra, see
Theorem \ref{deFinetti}.\\
Furthermore,  an analogue of the Hewitt-Savage 0--1 law can be formulated for the past tail algebra as well. Precisely, for a spreadable state
$\varphi$ on $\ast_\bn\ga$ the following are equivalent:
\begin{itemize}
\item[-] $\mathcal{T}_{\varphi_\bz}^{-}=\bc$;
\item[-] $\varphi$ is backwards-decoupling, namely  $\varphi(xyz)=\varphi(xz)\varphi(y)$ for all localized
$x, y, z$ in $\ast_\bn\ga$ provided that $x,y,z$ is have supports contained in finite subsets $F_x, F_y, F_z$ respectively, in the sense of Definition \ref{localized},  with $F_y < F_x \cup F_z$.;
\item[-] $\varphi$ is $\tau^{-1}$-$\pi_{\varphi_\bz}$-attractive, namely $\lim_{n\rightarrow+\infty}\om(\tau^{-n}(x))=\varphi_\bz(x)$, for all $x$ sitting in $\ast_\bn\ga\subset\ast_\bz\ga$ and for all
$\om$ in $\cs_{\pi_{\varphi_\bz}}$ (w.r.t. the "negative" $C^*$-algebra $\ast_{k=0}^{-\infty}\ga$).\\
\end{itemize}
These conditions make up yet another layer in the hierarchy of ergodicities, say [(iii)'], which 
is the same level as [(iii)], in the sense that it is implied by [(iv)] and implies [(ii)], while not
 being  equivalent to [(iii)] in Theorem \ref{ergohierspread}, see Example \ref{monotonetail}.\\
However, if $\varphi$ is an exchangeable state, it is easy to see that $\mathcal{T}_{\varphi_\bz}^{-}\cong
\mathcal{T}_\varphi$, with  [(iii)] and [(iii)'] becoming equivalent.\\
As well as the tail algebra, the past tail algebra can be used  to develop a fully-flegded  specular de Finetti theory, including
the corresponding version of the de Finetti theorem given in Theorem \ref{deFinetticonv}.
\end{rem}

\section{Non-commutative de Finetti theorem}\label{definettisec}

In this section we finally state what we deem is the most general version of the non-commutative de Finetti theorem. Theorem \ref{deFinetti}
provides an extension of K\"{o}stler's de Finetti theorem in \cite{K} for spreadable process by dropping
any assumption of faithfulness on  the distribution of the processes.
In  the same theorem we show that the conditional expectation onto the tail algebra of a spreadable process enjoys the analogous factorizing properties corresponding to (iii) in the hierarchy
of ergodicities, namely it is conditionally forward-decoupling.\\
Importantly, if we weaken the assumption of spreadability by requesting weak speadability, as defined in Definition \ref{weakspread}, while maintaining the forward-decoupling property for the conditional expectation,
we find in Theorem \ref{deFinetticonv} an accomplished characterization of weak spreadability, which ought to conceived of as a full non-commutative de Finetti theorem.
Finally, Corollary \ref{deFinettiquot} covers the case of processes whose distribution factorizes through quotients such that weak spreadability and spreadability actually coincide. In particular,
it returns de Finetti's classical theorem when applied to the infinite tensor product of a commutative sample algebra.

\begin{defin}
The conditional expectation $E_\tau:\mathcal{R}_{\varphi_\bz}\rightarrow \mathcal{T}_{\varphi_\bz}$ onto the tail algebra of a spreadable state $\varphi$ is said to satisfy:
\begin{itemize}
\item[-] the ordered product condition if 
$$E_\tau(XY)= E_\tau(X)E_\tau(Y)$$
for all localized $X, Y$ in $\mathcal{R}_{\varphi_\bz}$ such that $X,Y$ have supports contained in finite subsets $F_X, F_Y$ respectively, in the sense of Definition \ref{localized},  with $F_X > F_Y$ or $F_X<F_Y$;
\item[-]  the full product condition if 
$$E_\tau(XY)= E_\tau(X)E_\tau(Y)$$
for all localized $X, Y$ in $\mathcal{R}_{\varphi_\bz}$ such that $X,Y$ have supports contained in finite subsets $F_X, F_Y$ respectively with $F_X\cap F_Y=\emptyset$;
\item[-] the forward-decoupling condition if
$$E_\tau(XYZ)= E_\tau(XE_\tau(Y)Z)$$
for all localized $X, Y, Z$ in $\mathcal{R}_{\varphi_\bz}$ such that $X,Y,Z$ have supports contained in finite subsets $F_X, F_Y,F_Z$ respectively with $F_Y>F_X\cup F_Z$;
\item[-] the block-singleton condition if
$$E_\tau(XYZ)= E_\tau(XE_\tau(Y)Z)$$
for all localized $X, Y, Z$ in $\mathcal{R}_{\varphi_\bz}$ such that $X,Y,Z$ have supports contained in finite subsets $F_X, F_Y,F_Z$ respectively with $F_Y \cap (F_X\cup F_Z)=\emptyset$.
\end{itemize}

\end{defin}

%

\begin{rem}
It is easy to see that the forward-decoupling condition implies the ordered product condition. Furthermore,
the block-singleton condition implies both the full product condition and the forward-decoupling condition.

At this point a few words of comment are in order. What we called ordered product condition and full product condition usually  goes in the literature under the heading of 
ordered conditional independence and full conditional independence, respectively, see \cite{K}. However, following our guiding principle that stems from the hierarchy or ergodicities, Theorem \ref{ergohierspread}, 
it is the forward-decoupling condition  instead that should be regarded as a non-commutative form of conditional independence, thus we have changed the nomenclature accordingly.
Also note that the factorization properties of the conditional expectation represented by ordered and full product conditions, correspond to level (ii) in the hierarchy of ergodicities, which  for states is no guarantee
of triviality of the tail algebra.
Finally, we would like to point out that the Classical Probability setting  does not allow to differentiate between the ordered/full product properties and the forward-decoupling condition.
\end{rem}

%
%

\begin{thm}\label{deFinetti}
The conditional expectation $E_\tau$ onto the tail algebra of a spreadable state on $\ast_\bn\ga$
 enjoys the forward-decoupling  and full product conditions. Moreover, if the state is also exchangeable, then $E_\tau$ satisfies
the block-singleton condition.
\end{thm}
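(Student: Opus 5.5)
The plan is to verify every identity weakly, testing against vectors $\pi_{\varphi_\bz}(b)\xi_{\varphi_\bz}$ and $\pi_{\varphi_\bz}(c)\xi_{\varphi_\bz}$ with $b,c$ localized in $\ast_\bz\ga$. The two tools are the limit formula $E_\tau(\pi_{\varphi_\bz}(a))=\lim_n\pi_{\varphi_\bz}(\tau^n(a))$ from Theorem \ref{condexpgen} and $\bj_\bz$-invariance of $\varphi_\bz$. I first work with $X,Y,Z$ of the form $\pi_{\varphi_\bz}(x)$, $\pi_{\varphi_\bz}(y)$, $\pi_{\varphi_\bz}(z)$ for localized $x,y,z$ in the algebraic free product. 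The general localized case in $\pi_{\varphi_\bz}(\ast_F\ga)''$ then follows by Kaplansky density, using weak continuity of $E_\tau$ on bounded sets from Theorem \ref{condexpgen}. Here one must be careful, since products are only separately continuous; I would approximate one variable at a time, keeping the others fixed. By shift-covariance of $E_\tau$ I may also assume all supports lie in $\bn$.

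\emph{Forward-decoupling.} Take $F_y>F_x\cup F_z$. By the limit formula,
\[
\langle E_\tau(XYZ)\pi_{\varphi_\bz}(b)\xi_{\varphi_\bz},\pi_{\varphi_\bz}(c)\xi_{\varphi_\bz}\rangle=\lim_n\varphi_\bz(c^*\tau^n(x)\tau^n(y)\tau^n(z)b).
\]
Spreadability lets me push $\tau^n(y)$ further to the right, to $\tau^{n+m}(y)$, without changing this value, since the relative order of all supports is preserved. For fixed $n$, letting $m\to\infty$ gives
\[
\langle \pi(\tau^n(x))\,E_\tau(Y)\,\pi(\tau^n(z))\pi(b)\xi,\pi(c)\xi\rangle ,
\]
again by the limit formula. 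Since $E_\tau(Y)$ is shift-invariant by Theorem \ref{condexpgen}, this equals $\langle \tau^n(XE_\tau(Y)Z)\pi(b)\xi,\pi(c)\xi\rangle$.

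The main obstacle is then to show that
\[
\lim_n\langle \tau^n(W)\pi(b)\xi,\pi(c)\xi\rangle=\langle E_\tau(W)\pi(b)\xi,\pi(c)\xi\rangle
\]
for $W=XE_\tau(Y)Z$, which is no longer an element of the algebraic free product. I would handle this with equation \eqref{condexchange}. That equation extends to the whole von Neumann algebra, and for $W$ localized in $\pi_{\varphi_\bz}(\ast_{[0,N]}\ga)''\vee\mathcal{T}_{\varphi_\bz}$ it expresses both sides by shifting $b,c$ to the far left. Since $E_\tau(Y)$ can be approximated by elements supported arbitrarily far right, namely $\tau^m(y)$ with $m$ large, both sides agree in the limit, so the equality holds. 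This gives $E_\tau(XYZ)=E_\tau(XE_\tau(Y)Z)$.

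\emph{Full product.} Take $F_X\cap F_Y=\emptyset$. The supports interleave, so I cannot move one block rigidly. Instead I use the same technique: compute $\lim_n\varphi_\bz(c^*\tau^n(xy)b)$ and reduce to the ordered case. The ordered case is $E_\tau(XY)=E_\tau(X)E_\tau(Y)$ for $F_X<F_Y$, obtained from forward-decoupling with $Z=1$ and the bimodule property. For interleaved supports I would invoke the appendix argument behind Lemma \ref{insertproj}, which is used in Theorem \ref{ergohierspread} to pass from ordered to full product. I would run it in a conditional form, inserting $E_\tau$ in place of the projection $E_\varphi$ through the relation $\langle E_\tau(T)\pi(b)\xi,\pi(c)\xi\rangle$ from \eqref{condexchange}. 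I expect this to be the delicate step.

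\emph{Exchangeable case.} Here the block-singleton condition is immediate. Permutation invariance of $E_\tau$ (Theorem \ref{condexpgen}) together with exchangeability lets me move the support of $Y$ to the right of $F_X\cup F_Z$ by a permutation fixing $F_X\cup F_Z$. The identity then reduces to the forward-decoupling case already proved, with both sides transported by $U_\sigma$, and $E_\tau(Y)$ is fixed by $U_\sigma$.
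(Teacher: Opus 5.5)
Your forward-decoupling argument and your reduction of the exchangeable case are sound, though the first is longer than it needs to be. The paper gets forward-decoupling in two lines: the $\bj_\bn$-invariance of $E_\tau$ from Theorem \ref{condexpgen} gives $E_\tau(\pi_{\varphi_\bz}(xyz))=E_\tau(\pi_{\varphi_\bz}(x)\pi_{\varphi_\bz}(\tau^m(y))\pi_{\varphi_\bz}(z))$ for every $m$, and normality handles $m\to\infty$. Your ``main obstacle'' also dissolves. For every $W\in\mathcal{R}_{\varphi_\bz}$ the coefficient $\langle \tau^n(W)\pi_{\varphi_\bz}(b)\xi_{\varphi_\bz},\pi_{\varphi_\bz}(c)\xi_{\varphi_\bz}\rangle$ equals $\langle W\pi_{\varphi_\bz}(\tau^{-n}(b))\xi_{\varphi_\bz},\pi_{\varphi_\bz}(\tau^{-n}(c))\xi_{\varphi_\bz}\rangle$, which by \eqref{condexchange} already equals $\langle E_\tau(W)\pi_{\varphi_\bz}(b)\xi_{\varphi_\bz},\pi_{\varphi_\bz}(c)\xi_{\varphi_\bz}\rangle$ once $n$ is large.

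The genuine gap is the full product condition for interleaved supports. This is the only hard part of the theorem, and you leave it at ``run Lemma \ref{insertproj} in conditional form''. Lemma \ref{insertproj} is an identity for compressions by the projection $E_\varphi$ onto $V_\tau$-invariant vectors in the GNS space of one spreadable state. It works because there K\"{o}stler's averages $T_N$ converge strongly to $E_\varphi$ by the mean ergodic theorem. In $\ch_{\varphi_\bz}$ you only have weak statements about $E_\tau$, namely the limit formula and \eqref{condexchange}. Averaging $\pi_{\varphi_\bz}(\alpha_{\theta_{N,\mathbf{l}}}(a))$ and $\pi_{\varphi_\bz}(\alpha_{\theta_{N,\mathbf{k}}}(b))$ independently inside $E_\tau$ produces a product of two averages, and weak limits cannot be taken factor by factor in a product. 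Strong convergence is exactly what faithfulness supplied in K\"{o}stler's setting. So your proposal lacks the mechanism that turns the compression identity into an operator identity in $\mathcal{T}_{\varphi_\bz}$.

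The missing idea, as in the paper, is to test against a separating family of spreadable functionals that factor through $E_\tau$, and to apply Lemma \ref{insertproj} in each of their GNS spaces.
\begin{itemize}
\item Choose normal states $\om_i^0$ separating $\mathcal{T}_{\varphi_\bz}$, let $\widetilde{\om}_i=\om_i^0\circ E_\tau$ (these are normal and spreadable), and put $\Psi(S)=\oplus_i E_{\om_i}\pi_{\widetilde{\om}_i}(S)E_{\om_i}$.
\item On $\mathcal{T}_{\varphi_\bz}$, $\Psi$ is the direct sum of the GNS representations of the $\om_i^0$ by Theorem \ref{HSspreadonL2}, hence a faithful representation.
\item $\Psi(ST)=\Psi(S)\Psi(T)$ for disjointly supported $S,T$, by Lemma \ref{insertproj}.
\item $\Psi\circ E_\tau=\Psi$: on invariant vectors $\pi_{\widetilde{\om}_i}(\tau^n(x))$ acts as $V_\tau^n\pi_{\widetilde{\om}_i}(x)$, and $E_{\om_i}V_\tau^n=E_{\om_i}$.
\end{itemize}
Hence $\Psi(E_\tau(ST))=\Psi(S)\Psi(T)=\Psi(E_\tau(S)E_\tau(T))$, and injectivity on the tail algebra gives $E_\tau(ST)=E_\tau(S)E_\tau(T)$.

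A variant closer to your use of \eqref{condexchange} also works. Take $b$ localized in the negative integers, set $\xi_\om=\pi_{\varphi_\bz}(b)\xi_{\varphi_\bz}$, and let $\om(S)=\langle S\xi_\om,\xi_\om\rangle$ on $\mathcal{R}_{\varphi_\bz}$. Then $\om$ restricts to a spreadable positive functional on $\ast_\bn\ga$ and satisfies $\om=\om\circ E_\tau$. In its GNS space $\overline{\mathcal{R}_{\varphi_\bz}\xi_\om}$ one has $E_\om S\xi_\om=E_\tau(S)\xi_\om$, by the mean ergodic theorem together with the weak limit formula. Lemma \ref{insertproj} then yields $\om(ST)=\om(E_\tau(S)E_\tau(T))$. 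Polarization in $b$ and the $U_\tau$-invariance of $E_\tau(ST)-E_\tau(S)E_\tau(T)$ give the operator identity. Either way, this is the step your proposal still has to supply.
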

\begin{proof}
By normality of $E_\tau$  all the stated properties need only be verified  on the weakly dense
$*$-subalgebra $\pi_{\varphi_\bz}(\ast_\bn\ga)$. Let $x, y, z$ in $\ast_\bn\ga$ localized elements with the support of $y$ to the right of that of $x$ and $z$.
For every $n$ in $\bn$, there exists a map $h_n$ in $\bj_\bn$ which acts as the identity on the supports of $x$ and $z$ but shifts by $n$ that of $y$. By $\bj_\bn$-invariance of $E_\tau$ we then have:
\begin{align*}
&E_\tau(\pi_{\varphi_\bz}(xyz))=E_\tau((\pi_{\varphi_\bz}(\a_{h_n}(x))\pi_{\varphi_\bz}(\a_{h_n}(y))\pi_{\varphi_\bz}(\a_{h_n}(z))))\\
&=E_\tau(\pi_{\varphi_\bz}(x)\pi_{\varphi_\bz}(\tau^n(y))\pi_{\varphi_\bz}(z))
=\lim_n E_\tau(\pi_{\varphi_\bz}(x)\pi_{\varphi_\bz}(\tau^n(y))\pi_{\varphi_\bz}(z))\\
&=E_\tau(\pi_{\varphi_\bz}(x)E_\tau(\pi_{\varphi_\bz}(y)) E_\tau(\pi_{\varphi_\bz}(z))\,,
\end{align*}
and we are done. In the exchangeable case, the block-singleton condition can be proved analogously.\\
The full-product condition is more laborious to prove.\\
Consider a separating family ({\it e.g.} the set of all normal states) of normal states of $\mathcal{T}_{\varphi_\bz}$, say $\{\om_i^0: i\in I\}$ along with their normal spreadable extension to the whole $\mathcal{R}_{\varphi_\bz}$, $\{\widetilde{\om}_i: i\in I\}$. For each $i$, we denote by $\om_i$ the corresponding
state at the $C^*$-algebra level, $\om_i(x)=\widetilde{\om}_i (\pi_{\varphi_\bz}(x))$, $x$ in $\ast_\bn\ga$.\\
Let $\Psi$ be the linear map from $\mathcal{R}_{\varphi_\bz}$  to $\cb(\oplus_i\ch_{{{\om}_i}})$ given by $\Psi(S):=\oplus_{i\in I} E_{\om_i} \pi_{\widetilde{\om}_i}(S) E_{\om_i}$, where $E_{\om_i}$ is the orthogonal projection onto the closed subspace $\ch_{{{\om}_i}}^\tau\subset \ch_{{{\om}_i}}= \ch_{\widetilde{\om}_i}$. Note that when $\Psi$ is restricted to $\ct_{\varphi_\bz}$, by Theorem \ref{HSspreadonL2}, it is precisely the direct sum of the GNS representations corresponding to $\{\om_i^0: i\in I\}$  and thus is faithful on $\ct_{\varphi_\bz}$. \\
We claim that $\Psi(ST)=\Psi(S)\Psi(T)$ for all 
$S, T$ in $\mathcal{R}_{\varphi_\bz}$ with disjoint supports and that $\Psi (T)=\Psi(E_\tau(T))$ for all
$T$ in $\mathcal{R}_{\varphi_\bz}$. 
Using these claims, when $S,T\in \mathcal{R}_{\varphi_\bz}$ have disjoint supports we have 
\begin{align*}
\Psi(E_\tau(ST))&=\Psi(ST)=\Psi(S)\Psi(T)=\Psi(E_\tau(S))\Psi(E_\tau(T))\\
&=\Psi(E_\tau(S)E_\tau(T))\,,
\end{align*}
by the fact that the restriction of $\Psi$ to $\ct_{\varphi_{\bz}}$ is multiplicative since it is a representation.
Hence $E_\tau(ST)=E_\tau(S)E_\tau(T)$ by injectivity of $\Psi$ restricted to the tail algebra.\\
It remains to prove the claims. For the first one, note that  if $T, S$ are localized elements of
$\mathcal{R}_{\varphi_\bz}$ with disjoint supports, we have $E_{{\om}_i} \pi_{\widetilde{\om}_i}(TS) E_{{\om}_i}=E_{{\om}_i} \pi_{\widetilde{\om}_i}(T)E_{{\om}_i} \pi_{\widetilde{\om}_i}(S) E_{{\om}_i}$ by virtue of
Lemma \ref{insertproj}.\\
For the second claim, by density, it is enough to verify the claimed equality on elements of the type
$\pi_{\varphi_\bz}(x)$, $x$ in $\ast_\bn\ga$, only. But in this case we simply  have
\begin{align*}
E_{{\om}_i}\pi_{\widetilde{\om}_i}(E_\tau(\pi_{\varphi_\bz}(x))) E_{{\om}_i}&=\lim_n E_{{\om}_i} \pi_{\widetilde{\om}_i}(\tau^n(x)) E_{{\om}_i}=\lim_n
E_{{\om}_i} V_\tau^n \pi_{\widetilde{\om}_i}(x) E_{{\om}_i}\\
&= E_{{\om}_i}\pi_{\widetilde{\om}_i}(x)E_{{\om}_i}\, ,
\end{align*}
where $V_\tau$ is the isometric implementer of the shift in the representation $\pi_{\widetilde{\om}_i}$.
\end{proof}

\begin{rem}
At this point, one might wonder whether Theorem \ref{deFinetti} has a converse.
In this regard, one should look at the nice example  by Gohm and K\"{o}stler, in \cite[Theorem 5.6]{GK08}, of a stationary quantum stochastic process which fails to be spreadable but whose normal, shift-invariant and $\varphi$-invariant conditional expectation  nevertheless 
satisfies the full product condition. Now, it is a matter of simple calculations to ascertain that the conditional expectation of that process enjoys the forward-decoupling property as well.
This demonstrates that spreadability cannot be recovered from assuming stationarity of the process and forward-decoupling property of its (unique normal, shift-invariant, $\varphi$-invariant) conditional expectation, together with the full product condition, which prevents our Theorem \ref{deFinetti}
from having a converse.
\end{rem}
Going back to the proofs of the results we obtained, one can realize that most of them do not take full advantage of the fact that our processes are spreadable. In fact, what is really made use of is a weaker 
distributional symmetry that deserves to be singled out in a definition.
\begin{defin}\label{weakspread}
A quantum stochastic process with distribution $\varphi$ on $\ast_\bn\ga$ is said to be \emph{weakly spreadable} if
$$\varphi(xyx)= \varphi(x\tau(y)z)$$
 for all localized $x, y, z$ in $\ast_\bn\ga$ with supports contained in finite subsets $F_x, F_y, F_z$ respectively  with $F_y > F_x \cup F_z$.
\end{defin}

\begin{rem}
Apart from those involving the use of Proposition \ref{insertproj}, all the  main results of the present paper continue to hold
with spreadability replaced by weak spreadability. In particular, the non-commutative Olshen theorem, Theorem \ref{HSonZ}, the existence of the conditional expectation onto the tail algebra, Theorem \ref{condexpgen}, and the non-commutative Hewitt-Savage 0--1 law, Theorem \ref{01lawspread},  can be stated replacing spreadability with
weak spreadability. The hierarchy of ergodicities, Theorem \ref{ergohierspread}, continues to work as well for weakly spreadable states as long as  the full product condition is omitted from the statement in (ii).
Moreover, the conditional expectation built out of a weakly spreadable state continues to enjoy the forward-decoupling condition.\\
\end{rem}
We are now ready to state  the full non-commutative de Finetti theorem alluded to at the beginning of the section.  To do so, let us establish a bit of language: we say that the tail algebra of a process is expected if
there exists a normal conditional expectation from $\mathcal{R}_{\varphi_\bz}$ onto $\mathcal{T}_{\varphi_\bz}$ (which is $*$-isomorphic with $\mathcal{T}_\varphi$
via the restriction map in the weakly spreadable case as well).

\begin{thm}\label{deFinetticonv}
For a stationary quantum stochastic process with distribution $\varphi\in\mathcal{S}(\ast_\bn\ga)$, the following are equivalent:
\begin{itemize}
\item [(i)] the process is weakly spreadable;
\item [(ii)] the tail algebra $\mathcal{T}_\varphi$  is expected with
a unique shift-invariant normal, $\varphi$-invariant conditional expectation, which is also forward-decoupling 
\end{itemize}
\end{thm}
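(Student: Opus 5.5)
For (i)$\Rightarrow$(ii), I plan to rely on the remark following Definition \ref{weakspread}: Theorem \ref{condexpgen} and the forward-decoupling part of Theorem \ref{deFinetti} go through under weak spreadability. I would re-check the two places where spreadability entered, since both use only a monotone map that fixes the supports of the outer elements and shifts a right-hand block. These are the existence of the weak limit $\lim_n\pi_{\varphi_\bz}(\tau^n(a))$, which needs $\varphi_\bz(c^*\tau^n(a)b)=\varphi_\bz(c^*\tau^{n+1}(a)b)$ for $a$ supported to the right of $b,c$, and the identity \eqref{condexchange}. By stationarity, weak spreadability of $\varphi$ transfers to $\varphi_\bz$ on $\ast_\bz\ga$, and the needed equalities are instances of it with $x=c^*$, $z=b$. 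The fact that $\mathcal{R}^\tau_{\varphi_\bz}=\mathcal{T}_{\varphi_\bz}$ is obtained the same way, so $E_\tau$ is a normal, shift-invariant, $\varphi$-invariant conditional expectation onto the tail algebra. Uniqueness comes verbatim from the argument in Theorem \ref{condexpgen}. For forward-decoupling I would avoid $\bj_\bn$-invariance of $E_\tau$ and argue directly with vectors. For localized $x,y,z$ with $F_y>F_x\cup F_z$ and localized $b,c$, shift $b,c$ far to the left via \eqref{condexchange}. Then weak spreadability applied to $(\tau^k(c)^*x, y, z\tau^k(b))$ gives $\varphi_\bz(\tau^k(c)^*x\,\tau^n(y)\,z\tau^k(b))$ independent of $n$. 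Letting $n\to\infty$ yields $E_\tau(xyz)=E_\tau(xE_\tau(y)z)$ on a total set.

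For (ii)$\Rightarrow$(i), let $E$ be the given conditional expectation. First I would show $E(\pi_{\varphi_\bz}(\tau(y)))=E(\pi_{\varphi_\bz}(y))$, which is shift-invariance. Then, for $x,y,z$ localized with $F_y>F_x\cup F_z$, both $\tau(y)$ and $y$ lie to the right of $F_x\cup F_z$. Using $\varphi$-invariance and forward-decoupling of $E$,
\begin{align*}
\varphi(x\tau(y)z)&=\langle E(\pi_{\varphi_\bz}(x\tau(y)z))\xi_{\varphi_\bz},\xi_{\varphi_\bz}\rangle=\langle E(\pi_{\varphi_\bz}(x)E(\pi_{\varphi_\bz}(\tau(y)))\pi_{\varphi_\bz}(z))\xi_{\varphi_\bz},\xi_{\varphi_\bz}\rangle\\
&=\langle E(\pi_{\varphi_\bz}(x)E(\pi_{\varphi_\bz}(y))\pi_{\varphi_\bz}(z))\xi_{\varphi_\bz},\xi_{\varphi_\bz}\rangle=\varphi(xyz),
\end{align*}
which is weak spreadability (reading the typo $\varphi(xyx)$ in Definition \ref{weakspread} as $\varphi(xyz)$).

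The main obstacle is pinning down what ``shift-invariant'' provides, and making sure the tail algebra sits inside $\mathcal{R}_{\varphi_\bz}$ in a way compatible with $E$ when no spreadability is assumed a priori. If shift-invariance means $E\circ\tau=E$, as in the uniqueness argument of Theorem \ref{condexpgen}, the computation above closes immediately. If it only means $E\circ\tau=\tau\circ E$, I would first show that the range $\mathcal{T}_{\varphi_\bz}$ is pointwise fixed by $\mathrm{ad}\,U_\tau$. For this I would use forward-decoupling together with $\varphi$-invariance to show that the tail operators commute with $U_\tau$ on vectors $\pi_{\varphi_\bz}(b)\xi_{\varphi_\bz}$, approximating tail elements by ones supported far to the right. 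I expect this step to need the most care.
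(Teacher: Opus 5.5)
Your proposal is correct and follows the paper's proof. For (i)$\Rightarrow$(ii) you rerun Theorem \ref{condexpgen} and the forward-decoupling part of Theorem \ref{deFinetti} under weak spreadability; your direct vector argument via \eqref{condexchange} is just an unpacked form of the paper's $\bj_\bn$-invariance step. For (ii)$\Rightarrow$(i) you use the same chain of equalities as the paper: $\varphi$-invariance, forward-decoupling applied twice, and $E\circ\tau=E$.

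In the paper, ``shift-invariant'' does mean $E\circ\tau=E$ (see the proof of Theorem \ref{condexpgen} and Example \ref{noshift}). So your fallback branch for the weaker reading $E\circ\tau=\tau\circ E$ is not needed.
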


\begin{proof}
The implication (i)$\Rightarrow$(ii) is a consequence of Theorems \ref{condexpgen}, which holds  for weakly spreadable processes as well, and
the part of Theorem \ref{deFinetti} concerned with the forward-decoupling condition for the conditional expectation, which can be derived in the same was as we did in the spreadable case.\\
The implication (ii)$\Rightarrow$(i) follows from the computation below. If $\widetilde{\varphi}$ is the vector state on $\cb(\ch_{\varphi_\bz})$ associated with the GNS vector $\xi_{\varphi_\bz}$, and $x, y, z$ are localized elements of $\ast_\bn\ga$ such that the support
of $y$ lies to the right of that of both $x$ and $z$, we have:
\begin{align*}
\varphi(xyz)&=\widetilde{\varphi}(E_\tau(\pi_{\varphi_\bz}(xyz)))=\widetilde{\varphi}(E_\tau(\pi_{\varphi_\bz}(x)E_\tau(\pi_{\varphi_\bz}(y))\pi_{\varphi_\bz}(z)))\\
&= \widetilde{\varphi}(E_\tau(\pi_{\varphi_\bz}(x)E_\tau(\tau(\pi_{\varphi_\bz}(y)))\pi_{\varphi_\bz}(z)))\\
&=\widetilde{\varphi} (E_\tau(\pi_{\varphi_\bz}(x)\tau(\pi_{\varphi_\bz}(y))\pi_{\varphi_\bz}(z)))=\varphi(x\tau(y)z)\,,
\end{align*}
and we are done.
\end{proof}

Interestingly, if a weakly spreadable state  factorizes in the sense of Definition \ref{distri} through a quotient  by a $\bj_\bn$-invariant ideal $I$  which is linearly generated by ordered monomials (that is 
the set of all equivalence classes of the type $[\iota_{j_1}(a_1)\ldots \iota_{j_n} (a_n)]$, with $j_1<\ldots< j_n$ or $j_1>\ldots>j_n$ and  $a_1, \ldots, a_n$ in $\ga$, generates $\ast_\bn\ga/ I$  as a Banach space),
then the state is spreadable. \\
What is more, if the quotient is by a $\bp_\bn$-invariant ideal, then exchangeability is the same as spreadability, meaning that exchangeability can also be characterized in terms of stationarity
and the forward-decoupling property of the conditional expectation.

\begin{cor}\label{deFinettiquot}
For a stationary quantum stochastic process whose distribution $\varphi$ factorizes through a quotient of
$\ast_\bn\ga$ by an ideal $I$ such that
\begin{itemize}
\item [(a)] $I$ is $\bj_\bn$-invariant (respectively $\bp_\bn$-invariant)
\item [(b)] the quotient $\ast_\bn\ga/I$  is linearly generated by ordered monomials,

\end{itemize}

 the following are equivalent:
\begin{itemize}
\item [(i)] $\varphi$ is spreadable (respectively exchangeable);
\item [(ii)] $\varphi$ is weakly spreadable;
\item [(iii)] the tail algebra $\mathcal{T}_\varphi$ of $\varphi$ is  expected with
a unique shift-invariant normal, $\varphi$-invariant conditional expectation, which is also forward-decoupling;
\item [(iv)] the tail algebra $\mathcal{T}_\varphi$ of $\varphi$ is expected 
with
a unique shift-invariant normal, $\varphi$-invariant conditional expectation, which is also forward-decoupling and satisfies
the full product condition.
\end{itemize}
\end{cor}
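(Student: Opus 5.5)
The plan is to prove the cycle of implications (i)$\Rightarrow$(iv)$\Rightarrow$(iii)$\Rightarrow$(ii)$\Rightarrow$(i). Most links are already in place.

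\emph{(i)$\Rightarrow$(iv).} Spreadability (a fortiori exchangeability) gives, by Theorem \ref{condexpgen}, the unique normal, shift-invariant, $\varphi$-invariant conditional expectation $E_\tau$ onto $\mathcal{T}_{\varphi_\bz}\cong\mathcal{T}_\varphi$. Theorem \ref{deFinetti} then supplies both the forward-decoupling and the full product conditions.

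\emph{(iv)$\Rightarrow$(iii).} This is trivial.

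\emph{(iii)$\Rightarrow$(ii).} This is exactly the implication (ii)$\Rightarrow$(i) of Theorem \ref{deFinetticonv}. It uses only stationarity and the forward-decoupling property of the expectation, through the chain $\varphi(xyz)=\widetilde{\varphi}(E_\tau(\pi(x)E_\tau(\pi(y))\pi(z)))$ together with $E_\tau\circ\tau=E_\tau$.

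So the real content is (ii)$\Rightarrow$(i), and this is the only place where hypotheses (a) and (b) enter. The strategy is as follows. By (b) and the factorization $\varphi=\widetilde{\varphi}\circ p_I$, it suffices to check $\varphi\circ\a_h=\varphi$, for $h$ in $\bj_\bn$ (it is enough to take $h=\theta_N$), on ordered monomials $x=\iota_{j_1}(a_1)\cdots\iota_{j_n}(a_n)$ with $j_1<\cdots<j_n$ or $j_1>\cdots>j_n$. Indeed, by (a) the endomorphism $\a_h$ passes to the quotient, since $\a_h(I)\subseteq I$. Hence $\widetilde{\varphi}\circ\widetilde{\a}_h$ and $\widetilde{\varphi}$ are two bounded functionals on $\ast_\bn\ga/I$, and they agree once they agree on a set whose linear span is dense.

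For an increasing monomial, write $x=x_-\,y$, where $x_-$ collects the letters with index $<N$ and $y$ those with index $\geq N$. Then $\a_{\theta_N}(x)=x_-\tau(y)$, and weak spreadability with $z=1$ gives $\varphi(x_-y)=\varphi(x_-\tau(y))$. For a decreasing monomial, write $x=y\,x_-$ and apply weak spreadability with $x=1$ and $z=x_-$. This requires care about the exact form of Definition \ref{weakspread}: I read it as $\varphi(xyz)=\varphi(x\tau(y)z)$, the stated ``$\varphi(xyx)$'' being a typo. It also requires that localized $x$ or $z$ may be taken equal to $1$, which is harmless since $1$ has empty support in the unital setting.

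In the exchangeable variant, I first obtain spreadability exactly as above, using the $\bj_\bn$-invariance of $I$. I expect the main obstacle to be the passage from spreadability to exchangeability. Here I would use the $\bp_\bn$-invariance of $I$: for $\s\in\bp_\bn$ and an ordered monomial $x$, the image $\a_\s(x)$ is a monomial whose indices are again pairwise distinct. The plan is to note that $\ast_\bn\ga/I$ being spanned by ordered monomials forces, modulo $I$, any monomial with distinct indices to be rewritten in ordered form. Combined with $\bp_\bn$-invariance of $I$, this should make the quotient behave like a symmetric (tensor-type) structure, in which a permuted ordered monomial is congruent to the ordered monomial with the same letters re-placed at the permuted indices. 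That monomial is in turn the image of $x$ under a strictly increasing map restricted to the finite support, so spreadability yields $\varphi(\a_\s(x))=\varphi(x)$.

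If this rewriting argument fails in general, I would fall back on the order-reversing alternative in (b). A transposition of adjacent indices maps an increasing block to a block that can be split into increasing and decreasing pieces. I would then argue inductively on the number of inversions, using the conditional product structure of $E_\tau$ from (iv), which is already available from spreadability, to factor across disjoint blocks.
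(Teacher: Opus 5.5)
Your cycle (i)$\Rightarrow$(iv)$\Rightarrow$(iii)$\Rightarrow$(ii) and your proof of (ii)$\Rightarrow$(i) in the spreadable case coincide with the paper's. In both, invariance under the partial shifts $\theta_N$ is checked on ordered monomials, where it is just weak spreadability with one outer factor equal to $1$; your handling of decreasing monomials and of the passage to the quotient is fine.

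The gap is in the exchangeable variant, which the paper settles by citing \cite[Theorem 3.2]{ADR}. Your primary ``rewriting'' argument fails already for the tensor-product quotient $\ast_\bn\ga/I=\otimes_\bn\ga$ (the classical case when $\ga$ is commutative). Take $x=\iota_1(a)\iota_2(b)$ and $\s=(1\,2)$. Then $\a_\s(x)=\iota_2(a)\iota_1(b)\equiv\iota_1(b)\iota_2(a)$ modulo $I$. This is the image under an increasing map of $\iota_1(b)\iota_2(a)$, not of $x$. On the other reading of your claim, the congruence $\iota_2(a)\iota_1(b)\equiv\iota_1(a)\iota_2(b)$ is simply false. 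In twisted tensor products such as the CAR algebra, reordering even produces signs. More fundamentally, your argument only uses invariance under increasing maps on the finite support of a single monomial. No such argument can give $\varphi(\iota_1(b)\iota_2(a))=\varphi(\iota_1(a)\iota_2(b))$: the classical implication spreadable $\Rightarrow$ exchangeable fails for finite sequences, and it needs the infinitely many remaining indices, i.e.\ the tail.

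Your fallback names the right tool, but the induction on inversions is unnecessary and the decisive observation is missing. Once spreadability is established, Theorem \ref{deFinetti} gives the full product condition. Take a word $w=\iota_{j_1}(a_1)\cdots\iota_{j_n}(a_n)$ with pairwise distinct indices and peel off one letter at a time; each letter's support is disjoint from that of the rest. This gives $E_\tau(\pi_{\varphi_\bz}(w))=\Theta(a_1)\Theta(a_2)\cdots\Theta(a_n)$, where $\Theta(a):=E_\tau(\pi_{\varphi_\bz}(\iota_j(a)))$ does not depend on $j$ by shift invariance of $E_\tau$. Hence $\varphi(w)=\langle \Theta(a_1)\cdots\Theta(a_n)\xi_{\varphi_\bz},\xi_{\varphi_\bz}\rangle$ is independent of the (distinct) indices. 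So $\varphi(\a_\s(x))=\varphi(x)$ for every ordered monomial $x$ and every $\s\in\bp_\bn$, and (a), (b) finish the proof exactly as in the spreadable case. This is a self-contained replacement for the citation of \cite{ADR}, and it uses the tail through $E_\tau$, which is precisely what your local rewriting lacked.

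Finally, suppose the exchangeable variant is read as assuming only $\bp_\bn$-invariance of $I$. Your use of $\bj_\bn$-invariance is still legitimate. Since $I$ is closed, approximate $x\in I$ by localized elements and choose $\s\in\bp_\bn$ agreeing with a given $h\in\bj_\bn$ on their support; this shows $\a_h(I)\subseteq I$.
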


\begin{proof}
We need only prove the implication (ii)$\Rightarrow$ (i). This can be seen in the following way. 
Since $\varphi$ factorizes through a quotient as in the statement, it is enough
to check spreadability only on words of the type $\iota_{i_1}(a_1)\cdots i_{j_k}(a_k) i_{j_k}(a_k) i_{j_{k+1}}(a_{k+1})\cdots\iota_{i_n}(a_n)$ with ordered indices. In this case,
Definition \ref{weakspread} simply yields the following invariance property for $\varphi$:
\begin{align*}
&\varphi(\iota_{i_1}(a_1)\cdots i_{j_k}(a_k) i_{j_k}(a_k) i_{j_{k+1}}(a_{k+1})\cdots\iota_{i_n}(a_n))=\\
&\varphi(i_{j_1}(a_1)\cdots i_{j_k}(a_k) i_{j_{k+1}+1}(a_{k+1})\cdots i_{j_n+1}(a_n))\,,
\end{align*}
which is the invariance of $\varphi$ under the partial shifts $\theta_{j_{k+1}}$, and these generate the semigroup $\bj_\bn$.\\
Finally, the exchangeable case follows from \cite[Theorem 3.2]{ADR}.
\end{proof}
In particular, Corollary \ref{deFinettiquot} applies to tensor products, covering the classical setting of random variables taking their values on a compact Hausdorff space, and, more generally, to twisted tensor products, which are described and dealt with in Section \ref{models}.
The case of unbounded random variables is finally covered in the next section, where non-unital processes are  considered.

\begin{rem}
It remains open whether weak spredability automatically implies the full product condition for the expectation onto the tail algebra.
\end{rem}
\section{Non-unital processes}\label{nunital}

By a non-unital quantum stochastic process we mean a quadruple $(\gb, \om, \ga, \{\iota_j\}_{ j\in\bn})$, where now, as opposed to the unital case, the sample algebra $\ga$ is allowed to be non-unital and the $*$-homomorphisms $\iota_j$, too, can be non-unital.
Furthermore, we require that, for every $j$ in $\bn$, the positive functional $\om\circ\iota_j$ on $\ga$ is normalized.
Note that this requirement is automatically satisfied in the unital case.\\
The distribution of such a process is the
 normalized positive functional $\varphi: *_\bn^{0}\ga \to \bc$ on the non-unital free product $\ast_\bn^{0}\ga$ defined by 
\begin{align}
&\varphi (i_{j_1}(a_1)i_{j_2}(a_2)\cdots i_{j_n}(a_n)):=\om(\iota_{j_1}(a_1)\iota_{j_2}(a_2)\cdots \iota_{j_n}(a_n))
\end{align}
for all $n\in\bn$, $j_1\neq j_2\neq \cdots \neq j_n\in\bn$, 
$a_1$, \ldots, $a_n\in\ga$, where $i_k:\ga\rightarrow\ast_\bn^{0}\ga$ is the $k$-th embedding of $\ga$ into its infinite non-unital free product.
As in the unital case, $\varphi$ is a well-defined positive functional thanks to the universal property enjoyed by $\ast_\bn^{0}\ga$, {\it i.e.} for any family
$\{f_k\}_{k\in\bn}$ of possibly non-unital $*$-homomorphisms $f_k:\ga\rightarrow\gb$, with $\gb$ being a possibly non-unital $C^*$-algebra, there exists a unique
$*$-homomorphism  $\Phi: \ast_\bn^{0}\ga\rightarrow\gb$ such that $\Phi(i_k(a))=f_k(a)$, $a$ in $\ga$ and $k$ in $\bn$.\\
For any $C^*$-algebra $\gb$, we denote by $\widetilde{\gb}=\gb\oplus\mathbb{C}$ the one-point unitalization of $\gb$, with the tacit understanding that if $\gb$ is already unital, we add the unit all the same.\\
We denote by $\cs(\ast_\bn^0\ga)$  the convex compact set of all positive functionals on $\ast_\bn^0\ga$ with norm less than or equal to $1$, the zero functional included. Note that $\cs(\ast_\bn^0\ga)$ is affinely homeomorphic with $\cs(\widetilde{\ast_\bn^0\ga})$,  the compact convex set of all states of
$\widetilde{\ast_\bn^0\ga}$, which is the one-point unitalization of $\ast_\bn^0\ga$, via $\cs(\ast_\bn^0\ga)\ni\varphi\mapsto\widetilde{\varphi}\in\cs(\widetilde{\ast_\bn^0\ga})$, with $\widetilde{\varphi}(x+\lambda I)=\varphi(x)+\lambda$, $x$ in $\ast_\bn^0\ga$ and $\lambda$ in $\bc$.\\
Now, by the universal property  of the unital free product $\ast_\bn\widetilde{\ga}$, there exists a unital $*$-homomorphism
 $\Psi: \ast_\bn\widetilde{\ga}\rightarrow \widetilde{\ast_\bn^0\ga}$ uniquely determined by
$\Psi(i_k(a+\lambda I))=i_k'(a)+\lambda I$ for all $a$ in $\ga$ and $k$ in $\bn$. It is easy to see that
$\Psi$ is a $*$-isomorphism, as can be checked by constructing its inverse by means of the universal property of the non-unital
free product $\ast_\bn^0\ga$. In particular, the two compact convex sets $\cs({\ast_\bn^0\ga)}$ and $\cs(\ast_\bn\widetilde{\ga})$ can be identified.\\
That said, we can move on to give the definition of tail algebra for non-unital processes.

\begin{defin}
The tail algebra $\mathcal{T}_\varphi$ of a non-unital stochastic process with distribution $\varphi$ 
 is the von Neumann algebra
$$\mathcal{T}_\varphi=\bigcap_{n\in\bn}  \overline{\{\pi_\varphi(i_k(\ga)): k\geq n\}}^w\subset\cb(\ch_\varphi)\, ,$$ 
where $\overline{\mathcal{A}}^w$ denotes the closure of a $*$-subalgebra $\mathcal{A}$ in any of the weak, strong, ultraweak, ultrastrong operator topologies.
\end{defin}

\begin{rem}
Out of any non-unital process $(\gb, \om, \ga, \{\iota_j\}_{ j\in\bn})$ one can obtain a unital process
 $(\widetilde{\gb}, \widetilde{\om,} \widetilde{\ga}, \{\widetilde{\iota_j}\}_{ j\in\bn})$, where, for all $j$ in $\bn$,
$\widetilde{\iota_j}: \widetilde{\ga}\rightarrow\widetilde{\gb}$ is the unital extension of
$\iota_j$, {\it i.e.} $\widetilde{\iota_j}(a+\lambda I)=\iota_j(a)+\lambda I$, $a$ in $\ga$ and $\lambda$ in $\bc$.
If $\varphi$ is the distribution of the original process $(\gb, \om, \ga, \{\iota_j\}_{ j\in\bn})$, then $\widetilde{\varphi}$ is the distribution 
of the unitalized process $(\widetilde{\gb}, \widetilde{\om,} \widetilde{\ga}, \{\widetilde{\iota_j}\}_{ j\in\bn})$, as is easily verified.\\
As in the unital case, we still denote by $\mathcal{R}_\varphi$ the (possibly degenerate) weakly closed $*$-algebra generated by the GNS representation of $\varphi$, that is $\overline{\pi_\varphi(\ast_\bn^0\ga)}^w$ and by $\mathcal{R}_{\varphi_\bz}$ the weakly closed $*$-algebra $\overline{  \pi_{\widetilde{\varphi}_\bz} (\ast_\bn^0\ga) }^w $.\\
Note that the Olshen Theorem carries over to the non-unital case with the same proof as in Theorem \ref{HSonZ}.\\
Furthermore, thanks to the unitalization precedure, for any non-unital quantum process $(\gb, \om, \ga, \{\iota_j\}_{ j\in\bn})$  there still exists a unique normal invariant conditional expectation $E_\tau$ onto the tail algebra $\mathcal{T}_\varphi$, which is obtained by restricting to
$\mathcal{R}_{\varphi_\bz}$ the conditional expectation $\widetilde{E_\tau}$ of the unitalized process.
Note that in the non-unital case, it is not necessarily true that $\bc I$ is contained in $\mathcal{T}_\varphi$ due to non-unitality of the $i_j$'s. Nevertheless, $\mathcal{T}_\varphi$ can never be $0$ unless $\varphi=0$.
\end{rem}
In the next definition we collect the analogues for non-unital processes of the equivalent properties in the level (iii) of the hierarchy of ergodicities, that is those of the Hewitt-Savage $0$--$1$ law.

\begin{defin}\label{nonunital01}
A stationary positive linear functional  $\varphi$ in $\cs(\ast_\bn^0\ga)$:
\begin{itemize} 
\item [(i)] is forward-decoupling  if $\varphi(xyz)=0$ when $\varphi(y)=0$, provided that $x,y,z$ are localized with supports contained in finite subsets $F_x, F_y, F_z$ respectively,  with $F_y > F_x \cup F_z$.

\item [(ii)] is $\pi$-attractive (with $\pi$ being a representation of $\ast^0_\bn\ga$) if for all $\om$ in
$\cs_\pi$ one has $\lim_n \om(\tau^n(x))=0$ if $\varphi(x)=0$, $x$ in $\ast^0_\bn\ga$.
\end{itemize}
\end{defin}

\begin{rem}
As will be clear later on, see \emph{e.g.} Example \ref{boolean}, it is not necessarily true that if $\varphi$ satisfisies any of (i)-(ii) then its extension $\widetilde{\varphi}$ to
$\ast_\bn\widetilde{\ga}$ satisfies the corresponding properties for states in the unital free product.
Of course, if the process is unital the above Definitions agree with the respective ones for unital processes in Definition \ref{equi}.
\end{rem}


The next result is the hierarchy of ergodicities for the distributions of weakly spreadable non-unital processes, which can be defined in the exact same way as in the unital case.

\begin{thm}\label{mainnonunital}
Let $\varphi$ in $\cs(\ast_\bn^0\ga)$ be the distribution of a weakly spreadable non-unital quantum stochastic process, then:\\
\begin{itemize}
\item [(i)]    $\varphi$ is extreme in $\cs^{\tau}(\ast_\bn^0\ga)$ $\Leftrightarrow$ $\{\pi_\varphi (\ast_\bn^0\ga), V_\tau  \}'=\mathbb{C}$;
\item [(ii)] ${\rm dim}\,E_\varphi=1$ $\Leftrightarrow$ $\varphi$ is strongly clustering $\Leftrightarrow$ $\varphi$ is an ordered product state;
\item [(iii)] $\mathcal{T}_\varphi=\bc F$ for some projection $F$ $\Leftrightarrow$ $\varphi$ is forward-decoupling $\Leftrightarrow$ $\varphi$ is $\pi_{\varphi_\bz}$-attractive;
\item [(iv)]   $\varphi$ is extreme in $\cs^{\tau}(\ast^0_\bn\ga)$ and has central support ($\xi_\varphi$ is separating for
$\pi_\varphi(\ast_\bn^0\ga)''$).
\end{itemize}
Furthermore, (iv) $\Rightarrow$ (iii) $\Rightarrow$ (ii) $\Rightarrow$ (i).

\end{thm}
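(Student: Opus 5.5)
The plan is to reduce everything to the unital results via the unitalization $\widetilde{\varphi}$ on $\ast_\bn\widetilde{\ga}\cong\widetilde{\ast_\bn^0\ga}$. The GNS space of $\widetilde{\varphi}$ coincides with that of $\varphi$: since $\varphi$ is normalized on each $i_k(\ga)$, an approximate unit $u_\lambda$ of $\ga$ satisfies $\pi_\varphi(i_k(u_\lambda))\xi\to\xi$, so $\xi_{\widetilde\varphi}$ lies in the closure of $\pi_\varphi(\ast^0_\bn\ga)\xi$. I would also check that weak spreadability of $\varphi$ passes to $\widetilde{\varphi}$. Expanding each $i_k(a+\lambda I)$ reduces this to words in which some letters are units, and those letters may simply be deleted.

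With this reduction, (i) follows exactly as in Theorem \ref{ergohierspread}. The argument there (Radon--Nikodym derivative $T\in\pi_\varphi(\ast^0_\bn\ga)'$, $V_\tau T\xi=T\xi$, then commutation with $V_\tau$) uses only stationarity. For (ii) I would invoke Proposition \ref{ergospread} for $\widetilde{\varphi}$. The ordered product condition for $\widetilde\varphi$ is equivalent to that for $\varphi$, since adding units only produces trivial terms. The same holds for strong clustering, and $E_\varphi$ is the same projection because the isometry $V_\tau$ is the same.

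For (iii) I would adapt the proof of Theorem \ref{01lawspread}, using the unital conditional expectation $\widetilde{E_\tau}$ restricted to $\mathcal{R}_{\varphi_\bz}$. For $\pi_{\varphi_\bz}$-attractive $\Rightarrow$ forward-decoupling: if $\varphi(y)=0$, apply attractivity to $\om=\varphi_\bz(x\,\cdot\,x^*)/\varphi_\bz(xx^*)$, use polarization, and use weak spreadability to identify $\varphi(xyx^*)=\varphi(x\tau^n(y)x^*)\to 0$. For forward-decoupling $\Rightarrow$ $\mathcal{T}_\varphi=\bc F$, the key step is the following. For localized $y$ the tail element $E_\tau(\pi(y))=\lim_n\pi(\tau^n y)$ satisfies
\[
\langle E_\tau(\pi(y))\pi(z)\xi,\pi(x^*)\xi\rangle=\varphi(x\tau^n(y)z),
\]
and this vanishes whenever $\varphi(y)=0$. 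Hence $E_\tau(\pi(y))=0$ whenever $\varphi(y)=0$, so $E_\tau$ factors through $\varphi$. Consequently the range $\mathcal{T}_{\varphi_\bz}$ of $E_\tau$ on the dense subalgebra $\pi(\ast^0_\bn\ga)$ is at most one-dimensional, hence equals $\bc F$ with $F=E_\tau(\pi(u))$ for any $u$ with $\varphi(u)=1$. Normality together with the algebra property forces $F$ to be a projection, and $F$ is nonzero because $\varphi\neq0$.

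For the converse, assume $\mathcal{T}_\varphi=\bc F$. Then $E_\tau(T)=\psi(T)F$ for a normal functional $\psi$, and $\varphi$-invariance gives $\psi(\pi(x))\langle F\xi,\xi\rangle=\varphi(x)$. Attractivity then follows as in $(i)\Rightarrow(iii)$: $\om(\tau^n x)\to\widetilde\om(E_\tau(\pi(x)))=\psi(\pi(x))\widetilde\om(F)$, which vanishes when $\varphi(x)=0$, provided $\langle F\xi,\xi\rangle\ne0$. The latter holds because $F\geq E_\tau(\pi(u_\lambda))$ in the limit and $\varphi(u_\lambda)\to1$.

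For the chain of implications: (iv) $\Rightarrow$ (iii) uses Corollary \ref{purerestriction}, applied to $\widetilde\varphi$ and cut down by the support projection of the restriction, yielding a faithful pure normal state and hence a one-dimensional algebra. (iii) $\Rightarrow$ (ii) holds because the forward-decoupling property with $z$ an approximate unit gives the ordered product condition. (ii) $\Rightarrow$ (i) is the general fact already used in the unital case. The main obstacle I expect is the degeneracy: in step (iii) one must justify that $F\neq0$ and that $\langle F\xi,\xi\rangle\neq 0$, and that the approximate-unit limits commute with $E_\tau$. I would handle this using normality of $E_\tau$ together with the normalization $\varphi\circ i_k$ being a state.
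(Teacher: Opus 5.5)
Your proposal is correct. Its overall architecture matches the paper's: you pass to $\widetilde{\varphi}$, reuse Theorem \ref{ergohierspread} and Proposition \ref{ergospread} for (i) and (ii), and rerun the argument of Theorem \ref{01lawspread} for attractivity $\Rightarrow$ forward-decoupling. The key step in (iii), however, is genuinely different.

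\textbf{The paper's route.} For forward-decoupling $\Rightarrow$ one-dimensional tail, the paper uses an approximate-unit argument to show that $\widetilde{\varphi}$ is an ordered product state. It then gets $\dim E_\varphi=1$ from Proposition \ref{ergospread} and repeats the mechanism of Theorem \ref{01lawspread}: $\xi_\varphi$ separates the tail, and $T\mapsto E_\varphi TE_\varphi$ injects it into the one-dimensional $E_\varphi\mathcal{R}_\varphi E_\varphi$ via \eqref{projinbetween}. For the converse it must separately establish $E_\tau(\pi_{\widetilde{\varphi}_\bz}(x))=\varphi(x)F$. It does this by a norm computation, where $\|E_\tau\|=1$ together with $\varphi$-invariance forces $\varphi(F)=1$ and $\lambda=\varphi$.

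\textbf{Your route.} You pair the weak-limit formula for $E_\tau(\pi(y))$ against the total family $\pi(z)\xi$, $\pi(x^*)\xi$; totality is exactly where the normalization and Lemma \ref{approxunit} enter. Forward-decoupling then says directly that $E_\tau$ annihilates $\ker\varphi$ on localized elements. Surjectivity of $E_\tau$ onto the tail (Olshen plus Theorem \ref{condexpgen}) and normality then give $\mathcal{T}=\bc F$ at once, with $E_\tau(\pi(x))=\varphi(x)F$ as a by-product.

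\textbf{Comparison.} Your argument bypasses Proposition \ref{ergospread}, the separating-vector claim and the compression identity entirely. The same idea would also shorten (ii)$\Rightarrow$(i) in Theorem \ref{01lawspread}. The price is reliance on the existence and surjectivity of $E_\tau$, which the paper needs anyway for the converse direction.

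\textbf{Two minor points.}
\begin{itemize}
\item In the converse, the nonvanishing of $\langle F\xi,\xi\rangle$ needs no approximate-unit argument. The identity $\psi(\pi(x))\langle F\xi,\xi\rangle=\varphi(x)$ together with $\varphi\neq 0$ already forces it.
\item In (iii)$\Rightarrow$(ii), letting $z$ run through an approximate unit only yields $\varphi(xy)=0$ whenever $\varphi(y)=0$. To get $\varphi(xy)=\varphi(x)\varphi(y)$ you need a second approximate unit $e_\mu$, supported to the right of $x$, applied to $y-\varphi(y)e_\mu/\varphi(e_\mu)$. This is precisely the computation in the paper's proof.
\end{itemize}
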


\begin{proof}

 Since the proof of this theorem is virtually the same as that of Theorem \ref{ergohierspread} by passing to the unitalized process,
we will just comment on those points where extra care has to be taken.\\
The equivalences in (iii), which represent the Hewitt-Savage $0$--$1$ law for non-unital processes can be done as in the proof of the analogous implications in Theorem \ref{01lawspread}. 
However, for the implication that the future-decoupling condition gives a trivial tail algebra we need to ascertain that
${\rm dim\,}E_\varphi=1$, which is used in the proof of Theorem \ref{01lawspread}.  We will prove  this by showing that the extension
$\widetilde{\varphi}$ is an ordered product state, to which one can then apply Proposition \ref{ergospread}. This amounts to showing that
$\widetilde{\varphi}(xy)=0$ if $\widetilde{\varphi}(y)=0$ for $x, y$ in $\widetilde{\ast_\bn^0\ga}$ localized and with the support of $y$ to the right of the support of $x$. 
Let $\cb\subset\ast_\bn^0\ga$ the $C^*$-subalgebra generated by those localized elements whose support is the same as that of $y=y'+\beta I$ for some
$y$ in $\cb$ and $\beta$ in $\bc$ (note that $(\varphi(y')=-\beta$ as follows from $\widetilde{\varphi}(y)=0$).
Let $\{e_\lambda: \lambda\in\Lambda\}$ be an approximate unit of $\gb$. Since the restriction
of $\varphi$ to $\gb$ has norm $1$ by hypothesis, we can apply Lemma \ref{approxunit} and conclude
$\lim_\l\pi_\varphi(e_\l)\xi_\varphi=\xi_\varphi$. \\
Now, writing $x=x'+\alpha I$, $x'$ in $\ast_\bn^0\ga$ and $\a$ in $\bc$, and taking into account that $\varphi(e_\l)$ tends to $1$, we find:
\begin{align*}
&\widetilde{\varphi}(xy)= \widetilde{\varphi}\big(x(y'+\beta I)\big)=\lim_\l \widetilde{\varphi}\big(x(y'+\frac{1}{\varphi(e_\l)}\beta e_\l)\big)=\\
&\lim_\l \varphi\left(x'\big(y'+\frac{1}{\varphi(e_\l)}\beta e_\l\big)\right)+ \alpha\varphi\left(y'+\frac{1}{\varphi(e_\l)}\beta e_\l\right)=0
\end{align*}
since $\varphi\big(y'+\frac{1}{\varphi(e_\l)}\beta e_\l\big)=\varphi(y')+\beta=0$ for every $\lambda$ and
the support of $y'+\frac{1}{\varphi(e_\l)}\beta e_\l$ lies to the right of the support of $x$ by construction.\\
The implication that a trivial tail algebra yields  attractivity, too, can be done as in the proof of Theorem  \ref{01lawspread}
as long as one first makes sure that $E_\tau(\pi_{\widetilde{\varphi}_\bz}(x))=\varphi(x)F$, $x$ in $\ast_\bn^0\ga$, where $F$ is the projection such that
$\mathcal{T}_\varphi=\bc F$, which is what we do below.\\
If $\mathcal{T}_\varphi=\bc F$, then $E_\tau(T)= \lambda(T)F$, $T$ in $\mathcal{R}_{\varphi_\bz}$,  for a suitable linear functional $\lambda$ on $\mathcal{R}_{\varphi_\bz}$. We first prove that $\lambda$ is normalized. To this end, note that $\|E_\tau\|=1$ (which is certainly true as $E_\tau(F)=F$) implies $\|\lambda\|=1$ because of the equality $|\lambda(T)|= \|\lambda(T)F\|=\|E_\tau(T)\|$.\\
Now by $\varphi$-invariance of $E_\tau$ we have
$|\lambda(T)|\varphi(F)=|\varphi(T)|$, for all $T$ in $\mathcal{R}_{\varphi_\bz}$, where with abuse of notation we have denoted by $\varphi$ its normal extension on $\mathcal{R}_{\varphi_\bz}$.
By taking the sup for $T$ running in the unit ball of $\mathcal{R}_{\varphi_\bz}$ on both the right and left side of the previous equality
we get $\varphi(F)=1$, hence, again by $\varphi$-invariance of $E_\tau$,
$\lambda=\varphi$.\\

\end{proof}

Having at hand a theory for non-unital processes allows us to tackle  monotone/antimonotone and  Boolean independence as well.

\begin{example}\label{boolean}
Following \cite{HO}, a non-unital process is said to be Boolean independent, identically distributed if its distribution is identically distributed   and satisfies:
\item $$\varphi(\iota_{i_1}(a_1) \cdots \iota_{i_m}(a_m))=\varphi(\iota_{i_1}(a_1))\varphi(\iota_{i_2}(a_2) \cdots \iota_{i_m}(a_m))$$
for  \(i_1 \neq i_2 \neq \cdots \neq i_n\).\\
It  is easy to see that a Boolean independent, identically distributed process is exchangeable and  satisfies the forward-decoupling condition (ii) in Definition \ref{nonunital01}. Therefore, by Theorem \ref{mainnonunital} it gives rise to a one-dimensional
tail algebra. \\
Moreover,  the projection $F$ such that $\mathcal{T}_\varphi=\bc F$ will in general be proper, as shown in \cite[Example 7.3]{Liu}.
This example also shows that the condition for a normalized positive functional to be forward-decoupling at the level of the non-unital process does not imply that the distribution of the unitalized process is still forward-decoupling in the sense of Definition \ref{equi}.\\
Importantly, when the sample algebra admits a single self-adjoint generator, that is $\ga=C(K)$ for some compact subset $K\subset\br$ of the real line, by using the characterization of Boolean  conditional i.i.d. in terms of invariance under the linear coaction of the quantum semigroup $\mathcal{B}_s(n)$ given in Theorem 1.1 of \cite{Liu}, to which the reader is referred for all the needed definitions,
we can come to the following characterization of Boolean i.i.d. processes: a quantum stochastic process 
is Boolean i.i.d. if and only if its distribution is 
$\mathcal{B}_s(n)$-invariant for all $n$ in $\bn$ and satisfies any of the three equivalent conditions of the non-commutative Hewitt-Savage $0$--$1$ law, Theorem
\ref{01lawspread}.
\end{example}

\begin{example}\label{monotone}
Following \cite{HO}, a (non-unital) process with distribution $\varphi$ and sample algebra $\ga$ is said to be
\begin{itemize}
\item[-] monotone independent if
$$\varphi(i_{j_1}(a_1) \cdots i_{j_m}(a_m))=\varphi(i_{j_p}(a_p))\varphi(i_{j_1}(a_1) \cdots i_{j_{p-1}}(a_{p-1}) i_{j_{p+1}}(a_{p+1}) \cdots i_{j_m}(a_m))$$   
when  \(j_1 \neq j_2 \neq \cdots \neq j_n\) and 
$j_p>j_{p\pm 1}$;
\item[-] anti-monotone independent if
$$\varphi(i_{j_1}(a_1) \cdots i_{j_m}(a_m))=\varphi(i_{j_p}(a_p))\varphi(i_{j_1}(a_1) \cdots i_{j_{p-1}}(a_{p-1}) i_{j_{p+1}}(a_{p+1}) \cdots i_{j_m}(a_m))$$   
when  \(j_1 \neq j_2 \neq \cdots \neq j_n\) and 
$j_p<j_{p\pm 1}$.\\
 \end{itemize}
It is easy  to see that a monotone/antimonotone identically distributed process is spreadable and satisfies the forward-decoupling condition.
In particular, by Theorem \ref{mainnonunital} the tail algebra of a monotone/antimonotone identically distributed process is one-dimensional. In the monotone case, the unitalized process continues to satisfy the forward-decoupling condition in Definition \ref{equi}, meaning that the the projection $F$ such that $\mathcal{T}_\varphi=\bc F$ coincides with the identity $I$ of $\cb(\ch_\varphi)$.\\
Unlike the monotone case, the tail algebra of an antimonotone independent , identically distributed process can be degenerate, as is shown in Example \ref{monotonetail}.\\
As with Boolean independence, for processes with a sample algebra admiting a single self-adjoint generator, that is $\ga=C(K)$ for some compact subset $K\subset\br$ of the real line, by using the characterization of monotone  conditional i.i.d. in terms of invariance under the coaction of the quantum semigroups $M(n,k)$ of quantum spreadability given in Theorem 1.1 of \cite{LiuBM}, to which the reader is referred for all the needed definitions,
we can come to the following characterization of monotone i.i.d. processes: a quantum stochastic process 
is monotone i.i.d. if and only if its distribution is 
$M(n, k)$-invariant for all $n$ in $\bn$ and $k<n$, and satisfies any of the three equivalent conditions of the non-commutative Hewitt-Savage $0$--$1$ law, Theorem
\ref{01lawspread}. The antimonotone case follows from the monotone case by reflection, that is, by reversing the order of the variables.

\end{example}

\begin{example}\label{monotonetail}
In this example of a non-unital quantum stochastic process the sample algebra $\ga$ is given by the $2$ by $2$ matrices $\mathbb{M}_2(\bc)$. In order to define all of the data of the quantum stochastic process, we need to recall what the antimonotone Fock space looks like.\\
 For $k\geq 1$, set $I_k:=\{(i_1,i_2,\ldots,i_k) \mid i_1< i_2 < \cdots <i_k, i_j\in \mathbb{N}\}$. The  antimonotone Fock space is the Hilbert space $$\cf_{{\rm amon}}(\ch):=\bigoplus_{k=0}^{\infty} \ch_k\,,$$ 
where $\ch_0=\mathbb{C}$ and for every $k\geq 1$ $\ch_k:=\ell^2(I_k)$. Henceforth we will denote by $\Om$ the vector in $\cf_{{\rm amon}}(\ch)$
given by $(1, 0, 0, \ldots)$ and refer to it  as the Fock vacuum.\\
The canonical basis of the antimonotone Fock space is obtained in the following way. If $(i_1,i_2,\ldots,i_k)\in I_k$ is an increasing sequence  of integers, we denote by $e_{(i_1,i_2,\ldots,i_k)}\in \ell^2(I_k)$ the square summable sequence that is always zero but at $(i_1, i_2, \ldots, i_k)$, where
it is $1$. The vector corresponding to the empty set
is just the vacuum $\Om$, and the corresponding vector state $\om_\Om$ is called the vacuum state. 
For every $i\in \mathbb{N}$, the monotone creation and annihilation operators are respectively given  by $a^\dag_i\Om=e_{(i)}$, $a_i\Om=0$ and
\begin{equation*}
a^\dagger_i e_{(i_1,i_2,\ldots,i_k)}:=\left\{
\begin{array}{ll}
e_{(i,i_1,i_2,\ldots,i_k)} & \text{if}\,\, i< i_1 \\
0 & \text{otherwise}, \\
\end{array}
\right.
\end{equation*}
\begin{equation*}
a_ie_{(i_1,i_2,\ldots,i_k)}:=\left\{
\begin{array}{ll}
e_{(i_2,\ldots,i_k)} & \text{if}\,\, k\geq 1\,\,\,\,\,\, \text{and}\,\,\,\,\,\, i=i_1\\
0 & \text{otherwise}. \\
\end{array}
\right.
\end{equation*}
%
The $C^*$-probability space $(\gb, \om)$  of the process is obtained by taking $\gb=\cb(\cf_{{\rm amon}}(\ch))$, and choosing $\om$ to be the vacuum state $\om_\Omega$.\\
Lastly, the $*$-homomorphisms $\iota_j: \mathbb{M}_2(\bc)\rightarrow\gb$ are given by $\iota_j(A)=a_j$, where
$A:=
 \left(
\begin{array}{ll}
0 & 1 \\
0 & 0
\end{array}
\right)\in\bm_2(\bc)
$.
The non-unital quantum process thus obtained is well known to be antimonotone independent and identically distributed.\\
Note that by considering the Hilbert space with reversed order relations between indices, one obtains a monotone independent process.\\
We claim
that the tail algebra of the antimonotone process is $\bc P_\Omega$, where $P_\Omega$ is the orthogonal projection onto the vacuum vector.\\
Indeed, it is easy to see  that $\lim_n a_na_n^\dagger= P_\Omega$ in the strong operator topology, which shows that $P_\Omega$ must lie in the tail algebra. On the other hand, the tail algebra must be one-dimensional, as explained in Example 
\ref{monotone}, hence the conclusion.\\
This process also provides a good illustration of the fact that the past tail algebra, see Remark \ref{pasttail}, and the tail algebra in general are not $*$-isomorphic. To see this, we consider the unitalized process obtained out of the above antimonotone process. Its tail algebra is not one-dimensional, containing $\bc P_\Omega\oplus \bc (I-P_\Omega)$. On the other hand, the past tail algebra of the unitalized process is seen to be $*$-isomorphic with the tail algebra of the unitalized monotone process,
which has a trivial tail algebra  $\mathbb{C}I$ by Example \ref{monotone}.\\
As a final remark, we would like to point out that the unitalized monotone process fails to satisfy the block-singleton condition while satisfying the forward-decoupling condition, for we have $0=\om_\Omega (a_1a_0 a_0^\dagger a_1^\dagger)\neq\om_\Omega(a_1a_1^\dagger)\om_\Omega(a_0a_0^\dagger)=1$.

\end{example}

\begin{rem}
The general de Finetti theorem, Theorem \ref{deFinetticonv}, continues to hold for non-unital processes as well with the same proof. Indeed, it is enough to take into account the unitalized process and consider the restriction of the conditional expecation to the relative non-unital algebras. In particular, the classical case of possibly unbounded real-valued random variables now fits in the theory developed in the paper.
\end{rem}

\section{Models}\label{models}

\subsection{Twisted tensor products}
In this section we provide a thorough account of the examples we alluded to  throughout the paper, especially in relation to the strictness of the implication (iii)$\Rightarrow$(ii) in the hierarchy of ergodicities
established in Theorem \ref{ergohierspread}.  Twisted tensor products by a bicharacter of the dual of a grading group $G$
give rise to quantum stochastic processes whose distributions factorize as  full product states but whose
tail algebra may fail nonetheless to be trival because it actually
keeps track of  the $G$-orbit of the state itself.\\
Twisted tensor products are particularly workable examples of quotients of the infinite free product, which are useful in that
the ideal out of which they are obtained is always invariant under spreadable maps and may be invariant under permutations depending on the bicharacter.\\
Suppose that the sample algebra $\ga$ is given more structure. namely  that it is a $G$-graded $C^*$-algebra, with $G$ being an abelian compact group. This means that we actually have a
  C$^*$-dynamical system \( (\ga, G, \gamma) \), where \( \mathfrak{A} \) is a unital \( C^* \)-algebra, and \( \gamma : G \to \text{Aut}(\ga) \) a group homomorphism, pointwise norm continuous, {\it i.e.}, for every \( a \) in \( \ga \), the function \( G \ni g \mapsto \gamma_g(a) \in \mathfrak{A} \) is continuous w.r.t the norm topology. 
 The eigenspace of the action associated with \( \chi \) in \( \widehat{G} \) ($\widehat{G}$ is the Pontryagin dual of $G$) is the subspace 
\[V_\chi := \{a \in \ga: \gamma_g(a) = \chi(g)a, \text{ for all } g \in G\}\subset\ga.\]
The elements of each $V_\chi$ are referred to as being homogeneous.
For every \( \chi, \eta \) in \( \widehat{G} \), one has \( V_\chi V_\eta \subset V_{\chi\eta} \) and \( V_\chi^* = V_{\chi^{-1}} \).
The algebraic direct sum \( \ga_0:= \bigoplus_{\chi \in \hat{G}} V_\chi \) is a dense \( * \)-algebra of \( \ga \), see \emph{e.g.} \cite[Theorem 2.5]{Salle16}, and it is 
known as the algebraic layer of \( \ga \).  If $a$ in $\ga$ is homogeneous with $\g_g(a)=\chi(g)a$ for all $g$ we say that 
$\chi$ is the degree of $a$ and denote it by $\deg a$.\\
To any bicharacter $v: \widehat{G}\times\widehat{G}\rightarrow\bt$ (a bicharacter is a function that is a character w.r.t. each separate variable), it is possible to associate a quotient of $\ast_\bn\ga$ by the $\bj_\bn$-invariant ideal
$I_v$ generated by the set
 $$\{\iota_k(a)\iota_l(b) -v(\deg a, \deg b)\iota_l(b)\iota_k(a): a, b\, \textrm{homogeneous in}\,\ga, k< l\, \in \bn\}\,.$$
The $C^*$-algebra thus obtained is known as the infinite twisted tensor product w.r.t. to $v$ (note that the case
$v=1$ gives back the usual maximal tensor product) and is denoted by $\otimes_v^\bn\ga$. 
By construction the action of $\bj_\bn$ on the infinite free product passes to its quotient $\otimes_v^\bn\ga$.
The action of $\bp_\bn$, instead, passes to $\otimes_v^\bn\ga$ if and only if the bicharacter $v$ is antisymmetric
($v(\chi, \eta)= \overline{v(\eta, \chi)}$ for all $\chi, \eta$ in $\widehat{G}$).

A more concrete construction of the infinite  twisted tensor product is possible, see {\it e.g.} \cite{ADR, FidVic}.
Composing each $i_k$ with the quotient map $\ast_\bn\ga\mapsto\otimes_v^\bn\ga$
gives a unital embedding of $\ga$ into $\otimes_v^\bn\ga$ which by a minor abuse of notation
we continue to denote by $i_k$.\\
The infinite twisted tensor product itself is acted upon $G$ through the so-called gauge action, which is given by
$$\a_g(i_{j_1}(a_1)\cdots i_{j_k}(a_k))=i_{j_1}(\gamma_g(a_1))\cdots i_{j_k} (\gamma_g(a_k))$$
for $k, j_1, \ldots, j_k$ in $\bn$, and $a_1, \ldots, a_k$ in $\ga$.\\
Unlike usual tensor products, it is not always possible to consider the tensor product state of given states on the sample algebra $\ga$ when the bicharacter $v$ is not trivial. In fact, some compatibility between the assigned states and the bicharacter must be fulfilled. In order to recall the necessary and sufficient condition for the tensor
product state to exist, we recall that the spectral support of a state $\om$, as introduced in \cite{ADR}, is the set
\[
\mathrm{supp}_G \,\omega := \{\eta \in \widehat{G} : \exists a \in \ga_\eta \text{ with } \omega(a) \neq 0\}.
\]
Clearly, $\omega$ is $G$-invariant if and only if $\mathrm{supp}_G \,\omega$ is trivial. That said, we are ready to state
the announced result, \cite[Corollary 3.8]{ADR}.  
\begin{thm}\label{prodstates}
Let $\omega$ be a state on $\ga$. There exists a (unique) state $\times \omega$ on $\bigotimes_{v}^{\mathbb{N}} \ga$ such that
\[
\times \omega \big(\iota_{j_1}(a_1)\cdots \iota_{j_n}(a_n)\big) = \omega(a_1)\cdots \omega(a_n),
\]
for all $n \in \mathbb{N}$, $j_1 < \cdots < j_n$ in $\bn$, $a_1,\dots,a_n \in \ga$, if and only if for any $\eta_1, \eta_2 \in \mathrm{supp}_G \,\omega$ one has $v(\eta_1,\eta_2)=1$.
\end{thm}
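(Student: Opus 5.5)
The plan is to prove necessity with a one-line hermiticity argument, and sufficiency by reducing, through a conditional expectation, to an ordinary (untwisted) product state.

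\emph{Necessity.} Suppose the product state $\times\omega$ exists, and let $\eta_1,\eta_2\in\mathrm{supp}_G\,\omega$. Pick homogeneous $a\in V_{\eta_1}$ and $b\in V_{\eta_2}$ with $\omega(a)\neq 0\neq\omega(b)$, and set $x:=i_1(a)i_2(b)$. By definition, $\times\omega(x)=\omega(a)\omega(b)$.

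Now compute $x^*=i_2(b^*)i_1(a^*)$. Since $\deg a^*=\eta_1^{-1}$ and $\deg b^*=\eta_2^{-1}$, the defining relation of $I_v$ reorders this as
$$x^*=\overline{v(\eta_1^{-1},\eta_2^{-1})}\,i_1(a^*)i_2(b^*)=\overline{v(\eta_1,\eta_2)}\,i_1(a^*)i_2(b^*),$$
using that $v$ is a bicharacter with values in $\bt$. Hence
$$\times\omega(x^*)=\overline{v(\eta_1,\eta_2)}\;\overline{\omega(a)}\;\overline{\omega(b)}.$$
Hermiticity of a state gives $\times\omega(x^*)=\overline{\times\omega(x)}=\overline{\omega(a)}\;\overline{\omega(b)}$. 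Since $\omega(a)\omega(b)\neq0$, this forces $v(\eta_1,\eta_2)=1$.

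\emph{Sufficiency.} Let $H\subset\widehat G$ be the subgroup generated by $\mathrm{supp}_G\,\omega$. Because $v$ is a bicharacter and equals $1$ on $\mathrm{supp}_G\,\omega\times\mathrm{supp}_G\,\omega$, it is trivial on $H\times H$.

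\emph{Step 1: a factorwise gauge action.} The relations generating $I_v$ are homogeneous in each tensor factor separately. So the product group $G^{\bn}$ (compact, product topology) acts on $\otimes_v^\bn\ga$ by $\beta_{(g_k)}(i_k(a))=i_k(\gamma_{g_k}(a))$: the relations are preserved because both sides pick up the same scalar. Averaging over the compact subgroup $(H^\perp)^{\bn}$ with respect to Haar measure yields a conditional expectation $P$. Its range is the $C^*$-subalgebra $\mathfrak{B}$ generated by the $i_k(\ga^H)$, where $\ga^H=\overline{\bigoplus_{\chi\in H}V_\chi}$. On a monomial $i_{j_1}(a_1)\cdots i_{j_n}(a_n)$ with homogeneous $a_l$ and distinct indices, $P$ keeps the monomial if every $\deg a_l\in H$ and kills it otherwise.

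\emph{Step 2: a product state on $\mathfrak{B}$.} Since $v|_{H\times H}=1$, the copies $i_k(\ga^H)$ pairwise commute in $\mathfrak{B}$. I want a state $\psi$ on $\mathfrak{B}$ with $\psi(i_{j_1}(a_1)\cdots i_{j_n}(a_n))=\prod_l\omega(a_l)$ for $a_l\in\ga^H$ and $j_1<\dots<j_n$. There is a surjection $\otimes_{\max}\ga^H\to\mathfrak{B}$, but the product state $\otimes\omega$ need not factor through its kernel. So instead I would use the concrete realization of $\otimes_v^\bn\ga$ from \cite{ADR, FidVic}: the twisted product is built on the spatial/ordinary tensor structure with degree-dependent twisting unitaries, and on the untwisted part $\mathfrak{B}$ these unitaries act trivially. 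This should identify $\mathfrak{B}$ with a $C^*$-tensor product of copies of $\ga^H$ on which $\otimes(\omega|_{\ga^H})$ is defined. This identification is the step I expect to be the main obstacle. If it is troublesome, the fallback is to prove positivity of $\psi$ directly on the algebraic layer: reduce to finitely many factors, where $\mathfrak{B}$ is generated by commuting copies, and use the GNS vector of $\omega$ on each copy.

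\emph{Step 3: define the state and check it.} Set $\times\omega:=\psi\circ P$. This is a state, as the composition of a state with a conditional expectation. To check the formula, decompose each $a_l\in\ga_0$ into homogeneous components; by continuity the algebraic layer suffices. $P$ discards exactly the terms in which some component has degree outside $H$. Since $\mathrm{supp}_G\,\omega\subset H$, the state $\omega$ vanishes on every $V_\chi$ with $\chi\notin H$, so those same components contribute zero to $\prod_l\omega(a_l)$. Hence $\times\omega$ agrees with $\prod_l\omega(a_l)$ on ordered monomials.

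Uniqueness holds because ordered monomials span a dense subspace of $\otimes_v^\bn\ga$, by the twisted relations.
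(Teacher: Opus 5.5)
The paper gives no proof of this statement; it quotes it from \cite[Corollary 3.8]{ADR}, so your argument has to stand on its own.

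\textbf{What works.} Your necessity argument is correct and complete. Reordering $x^*=i_2(b^*)i_1(a^*)$ produces the factor $\overline{v(\eta_1,\eta_2)}$. Hermiticity of $\times\omega$ together with $\omega(a)\omega(b)\neq 0$ then forces $v(\eta_1,\eta_2)=1$; the case $\eta_1=\eta_2$ is covered by taking $a=b$. The uniqueness remark is also fine. Steps 1 and 3 of the sufficiency part are sound: the factorwise $G^{\bn}$-action preserves $I_v$, and $P$ kills exactly the ordered homogeneous monomials having a degree outside $H$. Since $\omega$ vanishes on those components, $\psi\circ P$ has the right values once $\psi$ exists.

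\textbf{The gap is Step 2.} All the content of the sufficiency direction sits there, and neither of your suggestions closes it. The norm of $\mathfrak{B}$ is inherited from $\otimes_v^{\bn}\ga$, so it is computed over representations of the \emph{whole} twisted product, in which degrees outside $H$ still twist. Knowing that $\mathfrak{B}$ is generated by commuting copies of $\ga^H$ only gives the surjection $\otimes_{\max}\ga^H\to\mathfrak{B}$, which is the wrong direction. Your fallback also fails: positivity of $\psi$ on the algebraic span, obtained via $\otimes\pi_\omega$, gives neither well-definedness of $\psi$ on $\mathfrak{B}$ nor boundedness for the norm of $\mathfrak{B}$. A combination of ordered monomials may vanish in $\mathfrak{B}$ without $\psi$ vanishing on it, and positivity on a dense $*$-subalgebra does not imply continuity. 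The appeal to the concrete model is stated as a hope, not carried out. In fact Step 2 is just the existence of $\times\omega\restriction_{\mathfrak{B}}$, so it cannot be done without building a representation of all of $\otimes_v^{\bn}\ga$.

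\textbf{One way to supply it.}
\begin{enumerate}
\item Put $K=H^\perp$. Since $\omega(V_\chi)=0$ for $\chi\notin H$, $\omega$ is $K$-invariant. Hence $K$ is implemented on $\mathcal{H}_\omega$ by unitaries $U_k$ fixing $\xi_\omega$.
\item By Pontryagin duality, define $g_\eta\in G$ by $\chi(g_\eta)=\overline{v(\chi,\eta)}$. The map $\eta\mapsto g_\eta$ is a homomorphism, and $g_\eta\in K$ for $\eta\in H$ because $v(H,H)=1$.
\item Induce the covariant pair $(\pi_\omega,\eta\mapsto U_{g_\eta})$ from $H$ to the discrete group $\widehat G$. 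The space consists of $H$-equivariant $f:\widehat G\to\mathcal{H}_\omega$ that are square-summable over $\widehat G/H$; take $\xi$ supported on $H$ with $\xi(e)=\xi_\omega$. This yields $(\pi,\Theta)$ on some $\mathcal{H}$ with $\Theta_\eta\pi(a)\Theta_\eta^*=\pi(\gamma_{g_\eta}(a))$, $\langle\pi(\cdot)\xi,\xi\rangle=\omega$, and $\Theta_\eta\xi=\xi$ for $\eta\in H$.
\item On the infinite tensor product of copies of $(\mathcal{H},\xi)$, let $\rho(i_l(b))$, for $b\in V_\eta$, act as $\Theta_\eta$ on every factor of index $<l$, as $\pi(b)$ on the $l$-th factor, and as the identity elsewhere. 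This extends to a unital $*$-homomorphism of $\ga$ via the spectral measure on $G$ of the representation $\Theta\otimes\cdots\otimes\Theta$ of $\widehat G$. The generators of $I_v$ are killed, because $\pi(a)\Theta_\eta=v(\chi,\eta)\Theta_\eta\pi(a)$ for $a\in V_\chi$.
\item For an ordered monomial with $a_m\in V_{\eta_m}$, the $j_m$-th factor contributes $\langle\pi(a_m)\Theta_{\eta_{m+1}\cdots\eta_n}\xi,\xi\rangle$.
\begin{itemize}
\item If all $\eta_m\in H$, every $\Theta$ fixes $\xi$ and you get $\prod_m\omega(a_m)$.
\item Otherwise, at the largest $m$ with $\eta_m\notin H$ the factor equals $\omega(a_m)=0$.
\end{itemize}
\end{enumerate}
This produces $\times\omega$ directly, making $P$ and $\mathfrak{B}$ unnecessary. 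Alternatively, its restriction to $\mathfrak{B}$ is exactly your missing $\psi$.
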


In the sequel, tensor product states will be looked at  as distributions of quantum stochastic processes whose distribution factorizes through a twisted tensor product in the sense of Definition \ref{distri}.

\begin{rem}\label{tailGinv}
Infinite tensor product states as in Theorem \ref{prodstates} are all instances of full product states. Morever, it is easy to check that an infinite tensor product state
$\times \om$ is block-singleton if and only if $\om$ is $G$-invariant.
Also note that when the action of $G$ is not trivial, no infinite tensor product state satisfies any of the five natural non-commutative independence forms. 
\end{rem}

In the next section, we fix a particular specimen of infinite twisted tensor product to show what the tail algebra may look like when the assumption on the $G$-invariance of our product state is dropped.
As already mentioned,  the tail algebra will not be trivial so as to encode the $G$-orbit of the product state.

\subsection{The infinite non-commutative torus}
The infinite noncommutative torus $\mathbb{A}^{\bn}_{\alpha}$  with deformation parameter $\a\in\mathbb{R}$ is the    
 universal $C^* $-algebra generated by a countable set of unitaries $\{u_l : l \in \bn\}$ satisfying the commutation relations 
$ u_l u_k = e^{i2\pi\alpha} u_k u_l $ for any  $l, k \in \bn$  with $l < k$. \\
The $C^*$-algebra $\mathbb{A}^{\bn}_{\alpha}$ can also be obtained as an infinite twisted tensor product with sample
algebra $\ga= C(\bt)$ thought of as a $G$-graded $C^*$-algebra, with $G=\bt$ acting on $C(\bt)$ by rotation and bicharacter $v_\alpha:\bz\times\bz\rightarrow \bt$ given by $v_\alpha(l,k):=e^{i2\pi\alpha l k}$.\\
We recall that $\mathbb{A}^{\bn}_{\alpha}$ is endowed with a canonical trace, ${\rm tr}$, which is nothing but the infinite tensor
product of the state on $C(\bt)$ corresponding to the Haar  measure on $\bt$.\\
The convex set of all spreadable states on $\mathbb{A}^{\bn}_{\alpha}$, or equivalently by Corollary \ref{deFinettiquot} the set of all weakly spreadable states, is completely determined in \cite{CDGR}, where
the following has been proved.
\begin{thm}
The set of all spreadable states on $\mathbb{A}^{\bn}_{\alpha}$ is the Bauer simplex whose extreme points are the tensor product states $\times \omega$, where 
\begin{enumerate}
\item[(i)] If $\frac{\alpha}{2\pi}$ is irrational, the trace is the only spreadable state, i.e. $\omega$ is the Haar measure on $\bt$;
\item[(ii)] If $\frac{\alpha}{2\pi} = \frac{q_1^{n_1}\cdots q_s^{n_s}}{p_1^{m_1}\cdots p_r^{m_r}}$ is rational, $\omega$ is any state on $C(\mathbb{T})$ invariant under all rotations by $n_0$-th roots of unity, where $n_0 = p_1^{\{m_1/2\}}\cdots p_r^{\{m_r/2\}}$,
with $\{\frac{k}{2}\}$ is $\frac{k}{2}$ if $k$ is even and $\frac{k+1}{2}$ if $k$ is odd.
\end{enumerate}
\end{thm}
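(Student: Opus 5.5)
The plan is to combine the general machinery already established with a moment computation specific to $\mathbb{A}^{\bn}_{\alpha}$. Since the relations $u_lu_k=e^{i2\pi\alpha}u_ku_l$ let every element of the dense $*$-algebra be written as a combination of ordered monomials $u_{j_1}^{k_1}\cdots u_{j_n}^{k_n}$ with $j_1<\cdots<j_n$, a spreadable state $\varphi$ is determined by the numbers $m(k_1,\ldots,k_n):=\varphi(u_1^{k_1}\cdots u_n^{k_n})$. By spreadability these depend only on the string of exponents and not on the gaps between the indices. I would work with the canonical conditional expectation $E_\tau$ of Theorem \ref{condexpgen} and set $T_k:=E_\tau(\pi_{\varphi_\bz}(u_j^k))=\lim_n\pi_{\varphi_\bz}(u_n^k)$. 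By $\bj_\bn$-invariance this operator is independent of $j$, and it lies in $\mathcal{T}_{\varphi_\bz}$. Theorem \ref{deFinetti} (full product condition) then gives
$$E_\tau\big(\pi_{\varphi_\bz}(u_{j_1}^{k_1}\cdots u_{j_n}^{k_n})\big)=T_{k_1}\cdots T_{k_n}\qquad (j_1<\cdots<j_n).$$
In particular $\varphi$ is the $\xi_{\varphi_\bz}$-expectation of products of the $T_k$.

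Next I would compute the algebraic relations satisfied by the $T_k$. Taking weak limits in $u_ju_n^k=e^{i2\pi\alpha k}u_n^ku_j$ gives a twisted commutation of $T_k$ with the fixed variables. Evaluating $E_\tau$ on $u_1^ku_2^l$ in two ways gives $E_\tau(u_1^ku_2^l)=T_kT_l$. Rewriting this monomial as $e^{\pm i2\pi\alpha kl}u_2^lu_1^k$ and using the full product condition again gives $T_kT_l=e^{\pm i2\pi\alpha kl}T_lT_k$. Comparing this with $T_lT_k=E_\tau(u_1^lu_2^k)$ and with the version obtained by exchanging the order of the limits, I expect to force $T_kT_l=T_lT_k$, together with $T_kT_l=0$ whenever $e^{i2\pi\alpha kl}\neq1$. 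The key structural claim is then that $\mathcal{T}_{\varphi_\bz}$ is the von Neumann algebra generated by $\{T_k\}$. I would prove this by showing that $E_\tau$ maps the weakly dense algebra onto the algebra generated by the $T_k$, and then using that $E_\tau$ is a normal projection onto the tail. It would follow that the tail algebra is abelian.

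With an abelian tail, Corollary \ref{purerestriction} says that $\varphi$ is extreme exactly when $\widetilde{\varphi}$ restricted to $\mathcal{T}_\varphi$ is a normal character $\chi$. In that case $\varphi(u_{j_1}^{k_1}\cdots u_{j_n}^{k_n})=\prod_i\mu(z^{k_i})$ with $\mu(z^k):=\chi(T_k)$, which defines a state $\mu$ on $C(\bt)$. Hence $\varphi=\times\mu$. For the converse I would note that product states are full product states that are clearly spreadable, and that a spreadable state dominated by $\times\mu$ has the same product form by Corollary \ref{normalinvspread}; this gives extremality.

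Theorem \ref{prodstates} identifies which $\mu$ are admissible: $\times\mu$ exists iff $e^{i2\pi\alpha kl}=1$ for all $k,l\in\mathrm{supp}_G\,\mu\subset\bz$. The support is a set of integers closed under the relevant products, and I would reduce it to a subgroup $n\bz$ with $\alpha n^2\in\bz$ (up to the paper's normalisation of $\alpha$ versus $\alpha/2\pi$). In the irrational case this forces $n=0$, so $\mu$ is Haar measure and the trace is the unique spreadable state. In the rational case, a number-theoretic step computes the minimal $n$ with $n^2\alpha$ integral, which gives $n_0=\prod p_i^{\lceil m_i/2\rceil}$; states supported in $n_0\bz$ are exactly the ones invariant under rotation by $n_0$-th roots of unity. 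Finally, the set of extreme points, being $\{\times\mu\}$ with $\mu$ ranging over a weak$^*$-closed set, is closed. Since the spreadable states form a simplex (they are a face of the stationary ones by Corollary \ref{faces}, and barycentric decomposition goes via the abelian tail), this gives a Bauer simplex.

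I expect the main obstacle to be the second step: proving that $\mathcal{T}_{\varphi_\bz}$ is generated by the $T_k$ and is abelian. Without central support the limits defining the $T_k$ are only weak limits, so their products are not automatically limits of products. My first attempt would be to compute every product of $T$'s and fixed variables through the $\bj_\bz$-invariance trick from the proof of Theorem \ref{condexpgen}, pushing supports apart one at a time so that each product becomes a single iterated limit that can be compared directly.
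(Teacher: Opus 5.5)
The paper does not prove this statement. It is quoted from \cite{CDGR}, and even the abelianness of tails on $\mathbb{A}^{\bn}_{\alpha}$ used later in Theorem \ref{isotailtorus} is taken from \cite[Proposition 4.1]{CDGR}. Deriving the classification from $E_\tau$ and Theorem \ref{deFinetti} is therefore your own route, and it can be made to work. However, there is a genuine gap at the step everything hinges on: commutativity of the $T_k$ does not follow from the comparisons you describe. Forward-decoupling applied to $u_1^ku_2^l=e^{2\pi i\alpha kl}u_2^lu_1^k$ gives $T_kT_l=e^{2\pi i\alpha kl}T_lT_k$. Comparing with $T_lT_k=E_\tau(\pi_{\varphi_\bz}(u_1^lu_2^k))$, or writing $T_kT_l$ as an iterated weak limit in the other order, gives $T_kT_l=e^{-2\pi i\alpha kl}T_lT_k$. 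Together these only yield $(e^{4\pi i\alpha kl}-1)T_kT_l=0$. When $e^{2\pi i\alpha kl}=-1$, both relations read $T_kT_l=-T_lT_k$ and nothing forces commutation. For instance, if $e^{2\pi i\alpha}=-1$ you only get $T_1^2=0$ and $T_1T_1^*=-T_1^*T_1$. So your claim that $T_kT_l=T_lT_k$, with $T_kT_l=0$ whenever $e^{2\pi i\alpha kl}\neq 1$, is not reached this way.

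The missing ingredient is positivity.
\begin{itemize}
\item Since $E_\tau$ is $*$-preserving, $T_{-k}=T_k^*$. The first relation with $l=-k$ then reads $T_kT_k^*=e^{-2\pi i\alpha k^2}T_k^*T_k$.
\item Two positive operators that differ by a unimodular factor $\neq 1$ must both vanish. Hence $T_k=0$ unless $\alpha k^2\in\bz$.
\item Write $\alpha=a/q$ in lowest terms, in the normalisation of the defining relations. If $q\mid k^2$ and $q\mid l^2$, then $q^2\mid (kl)^2$, hence $q\mid kl$.
\item So all nonzero $T_k$ commute with each other and with their adjoints, and $\mathcal{T}_{\varphi_\bz}$ is abelian.
\end{itemize}
This also gives $\widetilde{\varphi}(T_k)=\varphi(u_j^k)=0$ unless $n_0\mid k$, for every spreadable $\varphi$. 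That settles the necessity half of (i)--(ii) directly, including uniqueness of the trace in the irrational case. Note that the spectral support itself need not be a subgroup, so your ``reduce to $n\bz$'' step should go through the subgroup it generates. Theorem \ref{prodstates} is then needed only for the existence of $\times\mu$.

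The step you single out as the main obstacle is actually routine. Multiplication is separately weakly continuous, so products of the $T_k$ are iterated weak limits. By normality, $\mathcal{T}_{\varphi_\bz}=E_\tau(\mathcal{R}_{\varphi_\bz})$ lies in the weak closure of $E_\tau$ applied to ordered monomials, i.e. in the von Neumann algebra generated by the $T_k$. No central support is needed.

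Two smaller repairs:
\begin{itemize}
\item Extremality of $\times\mu$ follows at once from the ordered product property and (ii)$\Rightarrow$(i) of Theorem \ref{ergohierspread}. Your domination argument really shows equality with $\times\mu$, not just ``the same product form''.
\item Corollary \ref{faces} cannot give the simplex property, since spreadable states on a free product need not form a simplex (constant processes with non-commutative $\ga$ give a face affinely isomorphic to $\cs(\ga)$). Get existence of the mixing measure from Choquet's theorem, using that the extreme boundary is closed. Get its uniqueness from Stone--Weierstrass: the functions $\mu\mapsto\prod_i\widehat{\mu}(k_i)$ form a unital, self-adjoint, point-separating algebra on the compact set of admissible $\mu$.
\end{itemize}
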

In the light of the result above, only the rational case needs a thorough discussion if tail algebras are to be dealt with. Indeed,
the tail algebra of the canonical trace is  trivial because the canonical trace satisfies the block-singleton condition.
Therefore, from now on we will suppose $\a=\frac{m}{n}$ with $m, n$ coprime.\\
 In this case, the grading
can more conveniently be described by the finite group $\bz_n=\{z\in\bt: z^n=1\}\subset\bt$ and by the bicharacter
$v: \widehat{\bz_n}\times \widehat{\bz_n}\rightarrow\bt$ given by
$v([l], [k])=e^{2\pi i \frac{m}{n}lk}$, where $\widehat{\bz_n}$ is identified with $\bz/n\bz$ (and $[\cdot]$ is the equivalence class mod $n$).\\
 For any spreadable state $\varphi$ on $\mathbb{A}_\a^\bn$,  we denote by
$\overline{\varphi}$ the average of $\varphi$ with respect to the action of the reduced gauge group $G=\bz_n$, that is

\begin{equation}
\overline{\varphi}=\frac{1}{|G|}\sum_{g\in G}\varphi\circ\a_{g}\,
\end{equation}
or, more concretely, $\overline{\varphi}=\frac{1}{n}\sum_{k=0}^{n-1}\varphi\circ\a_{z_0^k}$ with $z_0=e^{\frac{2\pi i}{n}}$.\\
Denote by $\varphi_{\bz}$ the unique spreadable extension of $\varphi$ to the infinite non-commutative 
torus indexed by the integers $\bz$, $\mathbb{A}_\a^\bz$. Note that $\varphi$  is dominated by $\overline{\varphi}$ and accordingly
$\varphi_\bz$ is still dominated by  $\overline{\varphi_\bz}$.
But then  the GNS representation of $\varphi_\bz$ can be realized as a subrepresentation of
the GNS representation of  $\overline{\varphi_\bz}$ , that is there exists a vector $x_\varphi$ in 
$\ch_{\overline{\varphi_\bz}}$
 such that
$\langle \pi_{\overline{\varphi_\bz}}(a) x_\varphi, x_\varphi \rangle=\varphi_\bz(a)$ for all $a$ in $\mathbb{A}_\a^\bz$, 
meaning the closed subspace $\ch_{\varphi_\bz}:=\overline{\pi_{\overline{\varphi_\bz}}(\mathbb{A}_\a^\bz)x_\varphi}$ is
a subrepresentation equivalent to the GNS representation of $\varphi_\bz$.\\
Set
$\mathcal{R}_{\varphi_\bz}:=\pi_{\varphi_\bz}(\mathbb{A}_\a^\bn)''$
and
$\mathcal{R}_{\overline{\varphi_\bz}}:=\pi_{\overline{\varphi_\bz}}(\mathbb{A}_\a^\bn)''$.
Let $\Psi:\mathcal{R}_{\overline{\varphi_\bz}}\rightarrow\mathcal{R}_{\varphi_\bz} $
the $*$-epimorphism obtained by restricting operators in the former algebra to the subspace
$\ch_{\varphi_\bz}$.
We recall that $\cs_{\pi_{\varphi_\bz}}^\tau$ is the convex set of all stationary states that are normal in the GNS representation of $\varphi_\bz$.

The following theorem completely describes tail algebras of the extreme states of the simplex of spreadable states on $\mathbb{A}_\a^\bn$, which we recall satisfy Property (ii) of Theorem \ref{ergohierspread}.
\begin{thm}\label{isotailtorus}
For any spreadable state $\varphi$  on $\mathbb{A}_\a^\bn$, one has  
$$\mathcal{T}_\varphi\cong \mathcal{T}_{\overline{\varphi}}\,,$$
with the $*$-isomorphism being realized by the restriction  to $\mathcal{T}_{\overline{\varphi}}$ of the  isomorphism $\Psi:\mathcal{R}_{\overline{\varphi_\bz}}\rightarrow\mathcal{R}_{\varphi_\bz}$ above.\\
Furthermore, if $\varphi$ is a tensor product state, then  $\cs^\tau_{\pi_{\varphi_\bz}}$ is the finite-dimensional Bauer simplex given by ${\rm conv}\{\varphi\circ\a_{g}: g\in \bz_n\}$ and 
$$\mathcal{T}_\varphi\cong \bc^d\quad d:=|\{\varphi\circ\a_g: g\in\bz_n\}|$$
in the $*$-isomorphism that identifies the minimal projections in $\bc^d$ with the supports of the extreme states of $\cs_{\pi_{\varphi_\bz}}^\tau$.
\end{thm}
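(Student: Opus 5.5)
The plan is to work throughout in the bilateral GNS spaces, identifying $\mathcal{T}_\varphi$ with $\mathcal{T}_{\varphi_\bz}=\mathcal{R}^\tau_{\varphi_\bz}$ via Theorem \ref{HSonZ}, and similarly for $\overline{\varphi}$. Both parts rest on one structural fact, which I would prove first: on tail elements the gauge action is inner, implemented by unitaries localized arbitrarily far to the left. Indeed, for $k<l$ one has $u_k^{j}u_l u_k^{-j}=e^{-2\pi i\frac{m}{n}j}u_l$ (up to the sign convention). Hence ${\rm ad}\,\pi(u_k^{j})$ acts on the $C^*$-algebra generated by $\{u_l:l>k\}$ as the gauge automorphism $\a_{g_j}$, where $g_j=e^{-2\pi i jm/n}$. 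Since $m,n$ are coprime, $j\mapsto g_j$ exhausts $\bz_n$.

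Passing to weak closures, I would show that for every $T\in\mathcal{T}_{\overline{\varphi_\bz}}$ and every $k\in\bz$:
\begin{itemize}
\item $\pi_{\overline{\varphi_\bz}}(u_k)^{j}\,T\,\pi_{\overline{\varphi_\bz}}(u_k)^{-j}$ equals $W_{g_j}TW_{g_j}^*$, where $W_g$ is the unitary implementing $\a_g$ on $\ch_{\overline{\varphi_\bz}}$ (it exists because $\overline{\varphi_\bz}$ is gauge invariant);
\item this operator is independent of $k$;
\item $\mathcal{T}_{\overline{\varphi_\bz}}$ is ${\rm ad}\,W_g$-invariant.
\end{itemize}
The same statements hold in $\ch_{\varphi_\bz}$, with $\pi_{\varphi_\bz}(u_k)^j$ replacing $W_g$.

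For the isomorphism $\Psi\upharpoonright_{\mathcal{T}_{\overline{\varphi}}}$, the argument has three steps.
\begin{itemize}
\item $\Psi$ maps $\{\pi(\iota_k(\ga)):k\geq N\}''$ onto the corresponding algebra for $\varphi_\bz$. Surjectivity on the intersections then follows from the bounded weak-accumulation argument used in the proof of Theorem \ref{HSonZ}.
\item For injectivity, let $T\in\mathcal{T}_{\overline{\varphi_\bz}}$ with $\Psi(T)=0$. Then $T$ vanishes on $\ch_{\varphi_\bz}$, which contains the vectors $\pi(u_k)^{-j}x_\varphi$. Hence $\pi(u_k)^{j}T\pi(u_k)^{-j}x_\varphi=0$, i.e. $\a_{g}(T)$ kills $x_\varphi$ for every $g$. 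Equivalently, $T$ kills each $W_g^{*}x_\varphi$, which is the vector representing $\varphi\circ\a_{g}$.
\item Realize $\xi_{\overline{\varphi_\bz}}$ inside the closed $\pi_{\overline{\varphi_\bz}}$-invariant span $\mathcal{K}$ of the subspaces $\overline{\pi(\mathbb{A}^\bz_\a)W_g^*x_\varphi}$, using $\overline{\varphi}=\frac1n\sum_g\varphi\circ\a_g$ and the standard dilation of a convex combination of dominated states. Since $\xi_{\overline{\varphi_\bz}}$ is cyclic, $\mathcal{K}=\ch_{\overline{\varphi_\bz}}$, and $T$ vanishes on each summand, so $T=0$.
\end{itemize}
The delicate point is to verify that $T$ kills whole cyclic subspaces, not just the vectors $W_g^*x_\varphi$. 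For this I would use again that $T$ is annihilated by $\Psi$ composed with the gauge conjugations: the restriction of $T$ to $\overline{\pi(\mathbb{A}^\bz_\a)W_g^*x_\varphi}$ is unitarily a restriction of $\a_g(T)$ to $\ch_{\varphi_\bz}$, which is zero.

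For the second part, let $\varphi=\times\om$ be a tensor product state.
\begin{itemize}
\item Each $\varphi\circ\a_g$ is again a product state, hence a full product state. It therefore lies at level (ii) of Theorem \ref{ergohierspread}, so it is extreme and $\dim E_{\varphi\circ\a_g}=1$.
\item Each $\varphi\circ\a_g$ is normal in $\pi_{\varphi_\bz}$, since on $\mathbb{A}^\bn_\a$ it is the vector state of $\pi_{\varphi_\bz}(u_{-1})^{-j}\xi_{\varphi_\bz}$ by the inner implementation above.
\item By Corollary \ref{normalinvspread}, $\cs^\tau_{\pi_{\varphi_\bz}}\cong\cs_*(\mathcal{T}_\varphi)$. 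Under this correspondence each $\varphi\circ\a_g$ gives a normal state of the tail whose GNS space is $E\ch$, which is one-dimensional (Theorem \ref{HSspreadonL2}). That state is therefore a normal character with support projection $P_g$, and distinct states give orthogonal $P_g$'s.
\end{itemize}
It remains to show $\sum_g P_g=I$, i.e. that $\overline{\varphi}$ (whose tail is isomorphic to $\mathcal{T}_\varphi$ by the first part) has a tail on which $\xi_{\overline{\varphi_\bz}}$ is separating. Take $T$ in the tail with $T\xi=0$. Since $\xi$ is gauge invariant, $\a_g(T)\xi=0$ for all $g$ as well. For homogeneous localized $x$, pick $k$ to the left of its support; the commutation relation gives $T\pi(x)=\pi(x)\a_{h}(T)$ for a suitable $h$, so $T\pi(x)\xi=0$ and $T=0$ by cyclicity. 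Thus the vector state is faithful on the tail. Since it equals $\frac1n\sum_g(\varphi\circ\a_g)\upharpoonright_{\mathcal{T}}$, its support $\sum_g P_g$ must be $I$. Consequently $\mathcal{T}_\varphi=\bigoplus_g\bc P_g\cong\bc^d$, its normal states are exactly ${\rm conv}\{\varphi\circ\a_g\}$, and this set is a $(d-1)$-simplex.

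I expect the main obstacle to be the rigorous passage from the $C^*$-level commutation relations to the weak-closure statement $\pi(u_k)^jT\pi(u_k)^{-j}=W_{g_j}TW_{g_j}^*$ on the tail, independent of $k$. My first attempt would be to establish it on the dense algebras $\{\pi(\iota_l(\ga)):l\geq N\}$ for $N>k$, where it is an exact identity of automorphisms, and then extend it by weak continuity of both ${\rm ad}$ maps before intersecting over $N$.
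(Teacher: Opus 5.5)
Your proof is correct, but it takes a different route from the paper's in both halves.

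\textbf{Injectivity of $\Psi$ on the tail.} The paper writes the vector state of $\xi_{\overline{\varphi_\bz}}$ on $\mathcal{R}_{\overline{\varphi_\bz}}$ as $\frac1n\sum_g\langle\Psi(\cdot)x_g,x_g\rangle$ with $x_g\in\ch_{\varphi_\bz}$. So $\Psi(T)=0$ forces $T\xi_{\overline{\varphi_\bz}}=0$. It then appeals to Lemma \ref{separating}, whose proof shows that gauge-fixed tail elements are central and averages with the faithful expectation $E_G$. You need no separating vector at this step. The gauge action is inner on the tail, implemented by $\pi(u_k)^j$, which also acts on $\ch_{\varphi_\bz}$. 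Hence the kernel is gauge invariant and kills every $W_g^*\ch_{\varphi_\bz}$. The Radon--Nikodym identity $\frac1n\sum_g S_g=I$ then shows that these subspaces span $\ch_{\overline{\varphi_\bz}}$.

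\textbf{The product-state part.} The paper first gets abelianness of the tail. It embeds the tail into $E_{\overline{\varphi_\bz}}\mathcal{R}_{\overline{\varphi_\bz}}E_{\overline{\varphi_\bz}}$, as in the proof of Theorem \ref{01lawspread}, and imports \cite[Proposition 4.1]{CDGR}. It then counts pure normal states via Corollary \ref{purerestriction}. You bypass this entirely:
\begin{itemize}
\item level (ii) of Theorem \ref{ergohierspread} gives $\dim E_{\varphi\circ\a_g}=1$;
\item so by Theorem \ref{HSspreadonL2} each $\varphi\circ\a_g$ restricts to a normal character of the tail;
\item distinct normal characters have orthogonal minimal central supports;
\item faithfulness of their average makes these supports sum to $I$. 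You prove that faithfulness by a one-line twisted-commutation argument, which streamlines Lemma \ref{separating}.
\end{itemize}

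\textbf{Comparison.} Your route is more self-contained and yields the decomposition $\mathcal{T}_\varphi=\bigoplus\bc P_g$ directly, with commutativity only a posteriori. The paper's route isolates abelianness of the tail as an intermediate structural fact.

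\textbf{Two points to tighten.}
\begin{itemize}
\item Your spanning argument uses the Radon--Nikodym choice $x_\varphi=S^{1/2}\xi_{\overline{\varphi_\bz}}$. For an arbitrary $x_\varphi$, as in the paper, add that tail elements commute with $\pi_{\overline{\varphi_\bz}}(\mathbb{A}^\bz_\a)'$. Then vanishing on a cyclic subspace depends only on the vector state.
\item The multiplicativity of the tail state attached to $\varphi\circ\a_g$ does not come from its GNS space literally being $E\ch$. It comes from the following: restriction to the cyclic subspace of $\pi_{\varphi_\bz}(u_{-1})^{-j}\xi_{\varphi_\bz}$ maps $\mathcal{T}_{\varphi_\bz}$ into $\mathcal{T}_{\varphi\circ\a_g}$, whose orbit of the GNS vector lies in the one-dimensional space $E_{\varphi\circ\a_g}\ch_{\varphi\circ\a_g}$.
\end{itemize}
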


\begin{proof}
We start by observing that, although $\varphi$ is not necessarily gauge invariant, the gauge action of
$\bz_n$ is all the same implemented in representation $\pi_{\varphi_{\bz}}$ on
$\pi_{\varphi_\bz}(\mathbb{A}_\a^\bn)$. Indeed, by the commutation rules of the non-commutative torus 
 $u_{-1} u_j u_{-1}^*= e^{2\pi i \frac{m}{n}}u_j$ for all $j\geq 0$, one sees that the adjoint action of $u_{-1}$
on $\mathbb{A}_\a^\bn$ coincides with $\a_{z_0^m}$ with $z_0=e^{2\pi i \frac{1}{n}}$.
Because $m$ and $n$ are coprime, by B\'ezout's identity there exist $k, l$ in $\bz$ such that $km+ln=1$. But then 
\begin{align*} 
 u_{-1}^k u_j{ (u_{-1})^*}^{k}= e^{2\pi i \frac{km}{n}}u_j=e^{2\pi i(\frac{1}{n}-l)}u_j=e^{2\pi i \frac{1}{n}}u_j\,,
\end{align*}
which means $u_{-1}^k$ implements the action of the generator of  the gauge group $\bz_n$, whose action is thus fully implemented.\\
The next step we take is to show that for all $g$ in $G=\bz_n$ the representations $\pi_{\varphi_\bz\circ\a_g}$ and
$\pi_{\varphi_\bz}$, understood as representation of $\mathbb{A}_\a^\bn$, are unitarily equivalent. This can be seen by noting that
$\pi_{\varphi_\bz\circ\a_g}$ is unitarily equivalent to $\pi_{\varphi_\bz}\circ\a_g$, and
each $\a_g$ is unitarily implemented in the GNS representation by the above discussion. In particular,
$\varphi\circ\a_g$ appears as a vector state (associated with  say $x_g$ in $\ch_{\varphi_\bz}$) in $\pi_{\varphi_\bz}$ (restricted to $\mathbb{A}_\a^\bn$).
As a result, the averaged state $\overline{\varphi}$ is a normal state in the representation $\pi_{\varphi_\bz}$ (restricted to $\mathbb{A}_\a^\bn$).\\
We now move on to show that the restriction map $\Psi$ defined above sends $\mathcal{T}_{\overline{\varphi}_\bz}$ onto
$\mathcal{T}_{\varphi_\bz}$. The inclusion $\Psi(\mathcal{T}_{\overline{\varphi}_\bz})\subseteq\mathcal{T}_{\varphi_\bz}$ is a straightforward consequence of the definition of tail algebra. As for the reverse inclusion, let $S$ be in $\mathcal{T}_{\varphi_\bz}$. By definition of $\mathcal{T}_{\varphi_\bz}$, for every $n$ in $\bn$ there exists $T_n$ in $\{ \pi_{\overline{\varphi_\bz}}(i_k(C(\bt))): k\geq n\}''$ such that $\Psi(T_n)=S$ and $\|T_n\|\leq \|S\|$ for all $n$
(that is because $\Psi$ sends $\{ \pi_{\overline{\varphi_\bz}}(i_k(C(\bt))): k\geq n\}''$ onto $\{ \pi_{\varphi_\bz}(i_k(C(\bt))): k\geq n\}''$ ).
Now the sequence $\{T_n: n\in\bn\}$ has a subsequence that converges to some $T$ in the weak operator topology. By construction $T$ sits in $\bigcap_n\{ \pi_{\overline{\varphi_\bz}}(\iota_k(\ga)): k\geq n\}''=\mathcal{T}_{\overline{\varphi}_\bz}$ and $\Psi(T)=S$ by continuity of $\Psi$ w.r.t. the weak operator topology.\\
As for injectivity of $\Psi$, let us 
now observe that $\overline{\varphi}_\bz(T)=\frac{1}{n}\sum_{g\in G} \langle T x_g, x_g \rangle$ holds for all
$T$ in $\mathcal{R}_{\overline{\varphi}}$, in that it certainly holds for all 
$T=\pi_{\overline{\varphi_\bz}}(a)$, $a$ in $\mathbb{A}_\a^\bn$.
Suppose now $\Psi(T^*T)=0$ for $T$ in $\mathcal{T}_{\overline{\varphi}_\bz}$. By the formula above
$\overline{\varphi}_\bz(T^*T)=\frac{1}{n}\sum_{g\in G} \|Tx_g\|^2=0$, which entails
$T^*T=0$ by  Lemma \ref{separating}.\\
As for the second part of the statement, we start by showing that $\mathcal{T}_\varphi$ is an abelian $C^*$-algebra.
This follows from the chain of $^*$-isomorphisms $\mathcal{T}_\varphi\cong\mathcal{T}_{\varphi_\bz}\cong\mathcal{T}_{\overline{\varphi_\bz}}\cong
E_{\overline{\varphi_\bz}} (\pi_{\overline{\varphi_\bz}}(\mathbb{A}_\a^\bn))''E_{\overline{\varphi_\bz}}$, where $E_{\overline{\varphi_\bz}}$ is the orthogonal projection onto $\ch_{\overline{\varphi_\bz}}^\tau$, and the last $*$-isomorphism can be proved in the exact same way as in the proof of Theorem \ref{01lawspread} and
relies on the fact that $\mathcal{T}_{\overline{\varphi}}$ is separated by $\xi_{\overline{\varphi}}$.
Finally, abelianness of algebras of the form $E_{\overline{\varphi_\bz}} (\pi_{\overline{\varphi_\bz}}(\mathbb{A}_\a^\bn))''E_{\overline{\varphi_\bz}}$ is shown in \cite[Proposition 4.1]{CDGR}.\\
Therefore, we need only show that $\mathcal{T}_\varphi$ is finite-dimensional with dimension given by the $d$ in the statement.
Now if $\varphi$ is extreme, then the states $\varphi\circ\a_g$ are extreme as well for all $g$ in $\bz_n$. Furthermore,
by what we saw above, they are normal in the representation $\pi_{\varphi_\bz}$, and thus by virtue of Theorem \ref{purerestriction} their restrictions to $\mathcal{T}_\varphi$ provide pure states of the tail algebra, hence 
${\rm dim}\,\mathcal{T}_\varphi \geq |\{\a\circ\a_g: g\in \bz_n\}|$. On the other hand, we must also have
${\rm dim}\,\mathcal{T}_\varphi \leq |\{\a\circ\a_g: g\in \bz_n\}|$ because $\{\varphi\circ\a_g: g\in \bz_n\}$ is a separating
set of states for the tail algebra (since their uniform convex combination gives $\overline{\varphi}$).
The rest of the statement is now a direct consequence of Corollary \ref{normalinvspread}.
\end{proof}

\begin{rem}\label{parafermions}
In \cite{BJLW} the authors address what they call braidable  states on the parafermion algebra, lavishing their attention to the corresponding  tail agebras.
Now the parafermion algebra is an instance of infinite twisted tensor product, with sample algebra $\bm_d(\bc)$ and grading group $\bz_d$.
As shown in \cite{GK08}, every braidable state is spreadable. However, it is presently unclear how large the class of braidable states on the parafermion algebra  actually is, whereas 
 the class of spreadable states forms a rich Bauer simplex whose extreme points are precisely the admissible tensor product states, see \cite[Section 4.2]{ADR}. The tail algebras of these extreme states can be described in the same way as
those for the infinite non-commutative torus by an analogue of Theorem \ref{isotailtorus}. Since the argument is entirely analogous to that for the infinite non-commutative torus, and both the statement and proof would require additional notation, we omit every detail.
\end{rem}

\begin{rem}\label{blocknoteqprod}
The result in \cite[Theorem 3.4]{ACGL} is inaccurate, for counterexamples can be given. To this end, we can consider the following example, which is actually  due to Elia Vincenzi and was contained in his Ph.D. thesis. However, since the
thesis is  no longer available online, we include a reasonably detailed account of that example.\\
Consider the $C^*$-dynamical system $(M_2(\bc),\bz_2\times \bz_2,\gamma)$, 
with gauge action $\gamma: \bz_2\times \bz_2\rightarrow$Aut$(M_2(\bc))$ determined by $\gamma_{(1,0)}(A)=ad_U(A)$, $\gamma_{(0,1)}(A)=ad_V(A)$ with
$$U=\begin{bmatrix}
1 & 0\\
0 & -1
\end{bmatrix}, 
V=\begin{bmatrix}
0 & 1\\
1 & 0
\end{bmatrix},
$$
together with the antisymmetric bicharater $v$ on $\widehat{\bz_2\times \bz_2}\cong \bz_2\times \bz_2$ given by 
$$v((x_1,x_2),(y_1,y_2))=(-1)^{x_1y_1+x_2y_2} \text{ for } (x_1,x_2),(y_1,y_2)\in \bz_2\times \bz_2.$$
Take the infinite twisted tensor product of $\bm_2(\bc)$ w.r.t. $v$, $\otimes^\bz_v M_2(\bc)$.
We note that the dynamics given by the permutation action gives rise to a $\bp_\bz$-abelian system, that is a system that satisfies the hypotheses of Theorem 3.4 in \cite{ACGL}.  This can be seen for instance by applying a minor variation of \cite[Theorem 4.10]{ADR}.
However, there are extreme exchangeable states on $\otimes^\bz_v M_2(\bc)$ that do not satisfy the block-singleton property, thus contradicting
the conclusion of Theorem 3.4 in \cite{ACGL}. Indeed, take $\omega$ to be the vector state on $M_2(\bc)$ induced by the vector $\frac{1}{\sqrt{2}}(1,i)\in\bc^2$. By Theorem \ref{prodstates}, there exists the tensor product state $\times \omega$ on $\otimes^\bz_v M_2(\bc)$, which is easily checked to be exchangeable, since the bicharacter $v$ is antisymmetric,  and not $\bz_2\times \bz_2$-invariant. It follows that $\times \omega$ does not satisfy the block singleton property, see Remark \ref{tailGinv}. In addition, $\times\om$ is an extreme  exchangeable state on $\otimes^\bz_v M_2(\bc)$ by the implication (ii)$\Rightarrow$(i) in the hierararchy of ergodicities for the exchangeable case, Theorem \ref{ergohierexchange}.

\end{rem}

\appendix
\section{}
The present appendix includes a series of results of various type which have been deferred to the end of the paper so as to
keep the general exposition as coincise as possible.
\subsection{Some equivalences of ergodic properties}

\begin{prop}\label{ergospread}
For a spreadable (or weakly spreadable)  state $\varphi$ on $\ast_\bn\ga$, the following conditions are equivalent:
\begin{itemize}
\item [(i)]  $\varphi$ is strongly clustering, {\it i.e.} $\lim_{n\rightarrow\infty} \varphi(x\tau^n(y))=\varphi(x)\varphi(y)$,  for  all $x, y$ in $\ast_\bn\ga$;
\item [(ii)] $E_\varphi$ is the projection onto $\bc\xi_\varphi$;
\item [(iii)] $\varphi(xy)=\varphi(x)\varphi(y)$,  for  all localized $x, y$ in $\ast_\bn\ga$ supported in subsets $F_x, F_y$ with either $F_x < F_y$ or $F_x > F_y$.
\end{itemize}
\end{prop}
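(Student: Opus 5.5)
The plan is to prove the cycle (iii)$\Rightarrow$(ii)$\Rightarrow$(i)$\Rightarrow$(iii), working throughout with the isometry $V_\tau$ on $\ch_\varphi$ and the von Neumann mean ergodic theorem $E_\varphi=\lim_n\frac1n\sum_{k=0}^{n-1}V_\tau^k$ (strong limit). Because $\xi_\varphi$ is cyclic, (ii) says exactly that $E_\varphi\eta=\langle\eta,\xi_\varphi\rangle\xi_\varphi$ for every $\eta$. The only consequence of (weak) spreadability I need is the following: if $x,y$ are localized with the support of $y$ to the right of that of $x$, then $\varphi(x\tau^n(y))=\varphi(xy)$ for every $n\geq0$. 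In the spreadable case this holds because some $h\in\bj_\bn$ fixes the support of $x$ and shifts that of $y$ by $n$. In the weakly spreadable case it follows from Definition \ref{weakspread} with $z=1$, applied $n$ times.

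For (iii)$\Rightarrow$(ii), I would first take $x,y$ localized. Pick $N$ so that the support of $\tau^N(y)$ lies to the right of that of $x^*$. By (iii),
$$\langle V_\tau^k\pi_\varphi(y)\xi_\varphi,\pi_\varphi(x)\xi_\varphi\rangle=\varphi(x^*\tau^k(y))=\varphi(x^*)\varphi(y)$$
for all $k\geq N$. Averaging over $k$ gives $\langle E_\varphi\pi_\varphi(y)\xi_\varphi,\pi_\varphi(x)\xi_\varphi\rangle=\varphi(y)\langle\xi_\varphi,\pi_\varphi(x)\xi_\varphi\rangle$. Localized elements are norm dense, so $E_\varphi\pi_\varphi(y)\xi_\varphi=\varphi(y)\xi_\varphi$, and cyclicity then yields (ii).

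For (ii)$\Rightarrow$(i), the average $\frac1n\sum_k\varphi(x\tau^k(y))$ converges to $\varphi(x)\varphi(y)$ by (ii). Averages alone do not give a genuine limit, however, so the main obstacle is to upgrade Cesàro convergence to actual convergence. I would do this through the spreadability identity above. For localized $x,y$, once the support of $\tau^N(y)$ is to the right of that of $x$, the sequence $\varphi(x\tau^n(y))$ is constant for $n\geq N$: apply the identity to $x$ and $\tau^N(y)$. A constant sequence equals its Cesàro limit, so it equals $\varphi(x)\varphi(y)$. A $3\varepsilon$ argument with $|\varphi(x\tau^n(y))|\leq\|x\|\|y\|$ then extends the limit to general $x,y\in\ast_\bn\ga$.

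For (i)$\Rightarrow$(iii), take $F_x<F_y$. By the identity above and (i), $\varphi(xy)=\lim_n\varphi(x\tau^n(y))=\varphi(x)\varphi(y)$. For $F_x>F_y$, apply the same reasoning to $\varphi(y^*x^*)=\overline{\varphi(xy)}$, where the support of $x^*$ lies to the right of that of $y^*$, and take complex conjugates.
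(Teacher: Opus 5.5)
Your proof is correct and uses the same ingredients as the paper's: the mean ergodic theorem $E_\varphi=\lim_n\frac{1}{n}\sum_{k<n}V_\tau^k$, and the identity $\varphi(x\tau^n(y))=\varphi(xy)$ for $F_x<F_y$ (with the adjoint trick when $F_x>F_y$). The differences are only that you run the cycle as (iii)$\Rightarrow$(ii)$\Rightarrow$(i)$\Rightarrow$(iii) instead of the paper's (i)$\Rightarrow$(ii)$\Rightarrow$(iii)$\Rightarrow$(i), so your eventual-constancy step in (ii)$\Rightarrow$(i) is the paper's averaging step in (ii)$\Rightarrow$(iii), and that in (iii)$\Rightarrow$(ii) you silently use stationarity $\varphi(\tau^k(y))=\varphi(y)$, which in the weakly spreadable case follows from Definition \ref{weakspread} with $x=z=1$.
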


\begin{proof}

We will  show the chain of implications  $(i)\Rightarrow (ii)\Rightarrow (iii)\Rightarrow (i)$.\\
We show that if (i) holds, then $E_\varphi \pi_\varphi(x)\xi_\varphi=\varphi(x) \xi_\varphi$ for all $x$ in $\ast_\bn \ga$, from which
(ii) follows. For any $y$ in $\ast_\bn\ga$ we have
\begin{align*}
&\langle E_\varphi \pi_\varphi(x)\xi_\varphi, \pi_\varphi(y)\xi_\varphi\rangle=\langle \pi_\varphi(x)\xi_\varphi, E_\varphi\pi_\varphi(y)\xi_\varphi\rangle=\lim_n \langle \pi_\varphi(x)\xi_\varphi, \frac{1}{n}\sum_{k=0}^n \pi_\varphi(\tau^k(y))\xi_\varphi\rangle\\
&=\lim_n \frac{1}{n}\sum_{k=0}^n\varphi(x\tau^k(y))= \varphi(x)\varphi(y)= \langle \varphi(x)\xi_\varphi,
\pi_\varphi(y)\xi_\varphi \rangle\, ,
\end{align*}
which proves the sought equality by cyclicity of $\xi_\varphi$.\\
Suppose now (ii) holds and let $x, y$ in $\ast_\bn\ga$ with the support of $y$ entirely to the right of the support of
$x$. We have
\begin{align*}
\varphi(xy)= \langle \pi_\varphi(xy)\xi_\varphi, \xi_\varphi \rangle= \langle \pi_\varphi(x)\pi_\varphi(\tau^q(y))\xi_\varphi, \xi_\varphi \rangle
\end{align*}
for all $q\geq0 $, thanks to 
$\bj_\bn$-invariance of $\varphi$ (or by weak spreadability), since there certainly exists an increasing map from from $\bn$ to $\bn$ that is the identity on the support of $x$ and shifts by $q$ that of $y$. 
But then
\begin{align*}
\varphi(xy)&=\langle \pi_\varphi(x)\frac{1}{n}\sum_{q=0}^{n-1}\pi_\varphi(\tau^q(y))\xi_\varphi, \xi_\varphi \rangle\\
&=\lim_n \langle \pi_\varphi(x)\frac{1}{n}\sum_{q=0}^{n-1}\pi_\varphi(\tau^q(y))\xi_\varphi, \xi_\varphi \rangle=\langle \pi_\varphi(x) E_\varphi \pi_\varphi(y)\xi_\varphi, \xi_\varphi\rangle\\
&= \langle  \pi_\varphi(x) \varphi(y)\xi_\varphi,\xi_\varphi \rangle = \varphi(x)\varphi(y)\,
\end{align*}
and we are done.
If it is the support of $x$ to lie to the right of the support of $y$, then
\begin{align*}
\langle \pi_\varphi(xy)\xi_\varphi, \xi_\varphi \rangle=\langle \xi_\varphi, \pi_\varphi(y^*)\pi_\varphi(x^*)\xi_\varphi \rangle=\langle \xi_\varphi, \pi_\varphi(y^*)\pi_\varphi(\tau^q(x^*))\xi_\varphi \rangle
\end{align*}
for the same reason as above. At this point we can safely proceed as above to find that 
$\varphi(xy)=\varphi(x)\varphi(y)$.\\
Finally, suppose $(iii)$ holds, that is  the factorization $\varphi(xy)=\varphi(x)\varphi(y)$ holds for all localized
$x, y$ with suitable supports. For $n$ large enough the support of $\tau^n(y)$ will lie to the right of the support of $x$, hence $\varphi(x\tau^n(y))=\varphi(x)\varphi(y)$, and 
$\lim_n \varphi(x\tau^n(y))=\varphi(x)\varphi(y)$. The general limit property is easily achieved by norm density of the set of localized elements.
\end{proof}

\subsection{An $L^2$ version of K\"{o}stler's ergodic Theorem}

We here derive an $L^2$ version of K\"{o}stler's ergodic Theorem \cite[Theorem 8.4]{K} where we remove the faithfulness assumption. This is the essential ingredient used to obtain the full product factorization condition of the conditional expectation onto the tail algebra for spreadable processes. 

Let us first fix some notation. For any $N\geq 1$, $\theta_N$ in $\bj_\bn$ is partial shift given by $\theta_N(n)=n+1$ for $n\geq N$ and $\theta_N(n)=n$ for $n< N$.
For  any fixed spreadable state $\varphi$, denote by  $V_{N}$ the isometry acting on the GNS Hilbert space $\ch_\varphi$ as
$V_{N} \pi_\varphi(x)\xi_\varphi:=\pi_\varphi (\theta_N(x))\xi_\varphi\, , \,x\in \ast_\bn\ga$.
Note that
\begin{equation}\label{commrul}
V_{N}V_{N-1}=V_{N-1}^2\, .
\end{equation}
Set
$$M_k^{(N)}:=\frac{1}{N}\sum_{l=0}^{N-1}V_{k}^l\quad T_N:=\prod_{k=0}^NV_{k}^{kN}M_k^{(N)}\, .$$
\begin{prop}\label{ergoKostler}
With the notation set above, for any spreadable state $\varphi$ on $\ast_\bn \ga$ one has
\begin{equation*}
\lim_{N\rightarrow\infty} T_N= E_\varphi
\end{equation*}
in the strong operator topology.
\end{prop}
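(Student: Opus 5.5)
The plan is to reduce to vectors $\pi_\varphi(x)\xi_\varphi$ with $x$ localized, to show there that $T_N$ is an average of spread copies of $x$, and then to prove weak convergence together with convergence of norms, which gives strong convergence. Since $\|T_N\|\le 1$ for all $N$, it suffices to check $T_N v\to E_\varphi v$ on the dense set of vectors $v=\pi_\varphi(x)\xi_\varphi$ with $x$ localized in $\{0,\ldots,L\}$, and, by linearity, with $x$ a monomial.

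\textbf{Invariant part.} By Theorem \ref{HSspreadonL2}, the range of $E_\varphi$ equals $\ch_\varphi^{\bj_\bn}$, so every $V_k$ fixes it. Hence each factor $V_k^{kN}M_k^{(N)}$, and therefore $T_N$, acts as the identity there, i.e. $T_NE_\varphi=E_\varphi$. It remains to show $T_N v\to E_\varphi v$ for localized $v$ as above.

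\textbf{Explicit averaging formula.} For $k>L$, $\theta_k$ fixes the support of $x$, so $V_k$ and $M_k^{(N)}$ act trivially on $\pi_\varphi(x)\xi_\varphi$. Expanding the product from the right and using \eqref{commrul}, I expect to obtain
\[
T_N\pi_\varphi(x)\xi_\varphi=\frac{1}{N^{L+1}}\sum_{l\in\{0,\ldots,N-1\}^{L+1}}\pi_\varphi(\a_{h_l}(x))\xi_\varphi .
\]
Here each $h_l$ is strictly increasing, and consecutive points $j<j+1$ of $\{0,\ldots,L\}$ are sent to points at mutual distance at least of order $N$, with each coordinate shifted by a partial sum of the $l_i$'s plus the offsets $kN$. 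The offsets $V_k^{kN}$ are exactly what keeps the spread blocks separated and ordered. This combinatorial bookkeeping is routine but must be done carefully.

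\textbf{Weak convergence.} Take $y$ localized in $\{0,\ldots,M\}$. By spreadability, $\langle \pi_\varphi(\a_{h_l}(x))\xi_\varphi,\pi_\varphi(y)\xi_\varphi\rangle$ depends only on the relative order pattern of the supports of $\a_{h_l}(x)$ and $y$. For all but a fraction $O(M/N)$ of the indices $l$, all points $h_l(j)$ with $j\ge 1$ lie beyond $M$. Only the first coordinate $h_l(0)=l_0$ ranges over a window that overlaps the support of $y$, and the $M_0$-average in that coordinate is a Ces\`aro average of $V_\tau$-powers. Using von Neumann's ergodic theorem, as in Theorem \ref{HSspreadonL2}, together with the weak limits of Theorem \ref{condexpgen} (far-shifted spread copies converge weakly), I would show that the limit equals $\langle E_\varphi\pi_\varphi(x)\xi_\varphi,\pi_\varphi(y)\xi_\varphi\rangle$. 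An alternative route is to note that any weak cluster point $w$ of $T_Nv$ satisfies $V_kw=w$ for every $k$: applying $V_k$ shifts one averaging index by one and changes the average by $O(1/N)$ in norm. Hence $w\in\ch_\varphi^{\bj_\bn}$, and since $E_\varphi T_N=E_\varphi$ (take adjoints of $V_k^*E_\varphi=E_\varphi$), we get $w=E_\varphi v$. This second route looks cleaner, and I would try it first.

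\textbf{Norm convergence.} Weak convergence gives $\liminf\|T_Nv\|\ge\|E_\varphi v\|$, so I need $\limsup\|T_Nv\|^2\le\|E_\varphi v\|^2$. This is the main obstacle, since the $V_k$ are non-commuting isometries and non-normal. I would expand
\[
\|T_Nv\|^2=N^{-2(L+1)}\sum_{l,l'}\varphi(\a_{h_l}(x)^*\a_{h_{l'}}(x)).
\]
For a proportion $1-O(L/N)$ of the pairs $(l,l')$, the two spread copies interlace in a fixed pattern with all gaps large. By spreadability, each such term equals $\varphi(x_0^*\cdots)$ for a configuration whose blocks can be pushed apart arbitrarily. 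The task is then to identify the common value with $\|E_\varphi v\|^2$. For this I would pass to $\varphi_\bz$ and use Theorem \ref{condexpgen} blockwise, iteratively replacing the rightmost far-away single-site factors by their limits. If that identification proves hard, the fallback is to mimic K\"ostler's inductive argument in \cite[Theorem 8.4]{K}, performing induction on $L$ with the Hilbert-space mean ergodic theorem for each $V_k$ in place of the faithful-state $L^2$ estimates used there.
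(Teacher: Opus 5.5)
\paragraph{The weak-convergence part is sound, with one correction.} Your reduction to localized vectors, the identities $T_NE_\varphi=E_\varphi=E_\varphi T_N$, and the weak-convergence argument are fine. However, ``applying $V_k$ shifts one averaging index'' only works for $k=0$. There $\|(V_0-I)T_N\|\le\|(V_0-I)M_0^{(N)}\|=\frac{1}{N}\|V_0^N-I\|\le 2/N$. For $k\ge 1$, $V_k$ does not commute with the factors $V_j^{jN}M_j^{(N)}$, $j<k$, since $V_kV_j=V_jV_{k-1}$. The case $k=0$ suffices anyway, because $\ker(V_0-I)=\ch_\varphi^{\tau}=\ch_\varphi^{\bj_\bn}$.

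\paragraph{The gap is the norm step.} That step is what upgrades weak to strong convergence, so it is the real content of the proposition. You leave it open, and your sketch of it rests on a false simplification. With $h_l(j)=j+\frac{Nj(j+1)}{2}+\sum_{i\le j}l_i$, two copies always satisfy $\{h_l(j),h_{l'}(j)\}<\{h_l(j+1),h_{l'}(j+1)\}$. But the order of $h_l(j)$ and $h_{l'}(j)$ is the sign of $\sum_{i\le j}(l_i-l'_i)$, so all $2^{L+1}$ interlacing patterns occur with asymptotically positive frequency. There is therefore no fixed pattern and no common value to identify. The quantity $\varphi(\a_{h_l}(x)^*\a_{h_{l'}}(x))$ may a priori depend on the pattern, and you would have to prove that the pattern-weighted average equals $\|E_\varphi v\|^2$.

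Your proposed tools do not give this. Theorem \ref{condexpgen} only controls single far-shifted copies. The factorization of $E_\tau$ that might handle interlaced words, namely the full product condition in Theorem \ref{deFinetti}, is itself derived from this proposition via Lemma \ref{insertproj}, so it is not available. The K\"{o}stler-type fallback is only named, not carried out.

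\paragraph{The missing idea.} No norm computation is needed. Strong convergence follows from the mean ergodic theorem for each single $V_k$, once you use that the fixed spaces are nested; this is how the paper argues.

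Let $E_k$ be the projection onto $\ker(V_k-I)$. By \eqref{commrul}, $\ker(V_k-I)\subseteq\ker(V_{k+1}-I)$. So for $i\le k$ one has $E_iE_k=E_i$, and $E_iV_k=E_i$ (from $V_kE_i=E_i$ and $V_k^*V_k=I$), hence $E_iM_k^{(N)}=E_i$. Set $B_k:=V_k^{kN}(M_k^{(N)}-E_k)$. Then $V_k^{kN}M_k^{(N)}=B_k+E_k$, and $E_iB_k=0$ for $i\le k$.

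Take $v=\pi_\varphi(x)\xi_\varphi$ with $x$ supported in $\{0,\ldots,L\}$ and $N\ge L$. The factors with $k>L$ fix $v$, and $v=E_{L+1}v$. Writing $P_j$ for the product of the first $j$ factors of $T_N$, these relations give $P_jE_j=P_{j-1}E_{j-1}+B_0\cdots B_{j-1}E_j$. Hence
\[
T_Nv=E_0v+\sum_{j=1}^{L+1}B_0B_1\cdots B_{j-1}E_jv .
\]
The $j$-th summand has norm at most $2^{j-1}\|(M_{j-1}^{(N)}-E_{j-1})E_jv\|$, which tends to $0$ by the mean ergodic theorem for $V_{j-1}$. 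Since $E_0=E_\varphi$ and the number of summands does not depend on $N$, $T_Nv\to E_\varphi v$ in norm.
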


\begin{proof}
Since $\|T_N\|\leq 1$ for all $N$ in $\bn$, it is enough to verify the limit equality in the statement only on the dense subspace of $\ch_\varphi$
 given by all vectors  $\pi_\varphi(x)\xi_\varphi$ with $x$ running through the (norm dense) subalgebra of localized elements.
Let $x$ sit in the subalgebra generated by the first $N_0$ embeddings and let $N\geq N_0$. From
$\theta_N(x)=x$, we have $V_{N}\pi_\varphi(x)\xi_\varphi=\pi_\varphi(x)\xi_\varphi$, hence
$M_N^{(n)}\pi_\varphi(x)\xi_\varphi=\pi_\varphi(x)\xi_\varphi $ for all $n$.\\
For every $k$, denote by $E_k$ the orthogonal projection onto the fixed-point subspace of $V_{k}$.
Note that $E_{0}$ is the projection $E_\varphi$ onto $\ch_\varphi^{\bj_\bn}=\ch_\varphi^\tau$.
 Now for all
$k\leq N$  we have $E_k V_{N}=E_k$. Indeed, proving this equality amounts to showing 
$V_{N}^*E_k=E_k$. This in turn is equivalent to $V_{N}E_k=E_k$, which is of course verified.
We clearly have $E_k V_{N}^l=E_k$ for all $l\geq 0$, and in particular $E_k M_N^{(n)} =E_k$.\\
But then we have
\begin{align*}
&T_N\pi_\varphi(x)\xi_\varphi=M_0^{(N)} V_{1}^{N} M_1^{(N)}\cdots V_{k}^{kN}M_k^{(N)}\cdots V_{N}^{N^2}M_N^{(N)}\pi_\varphi(x)\xi_\varphi=\\
&M_0^{(N)} V_{1}^{N} M_1^{(N)}\cdots V_{k}^{kN}M_k^{(N)}\cdots V_{N_0-1}^{(N_0-1)N}M_{N_0-1}^{(N)}\pi_\varphi(x)\xi_\varphi=\\
&\left(\prod_{l=0}^{N_0-1} V_{l}^{lN}(M_l^{(N)}-E_l)\right)E_{N_0}\pi_\varphi(x)\xi_\varphi+\\
&\left(\prod_{l=0}^{N_0-2} V_{l}^{lN}(M_l^{(N)}-E_l)\right)E_{N_0-1}\pi_\varphi(x)\xi_\varphi+\\
&\left(\prod_{l=0}^{N_0-3} V_{l}^{lN}(M_l^{(N)}-E_l)\right)E_{N_0-2}\pi_\varphi(x)\xi_\varphi+\\
&\ldots+\\
& (M_0^{(N)}-E_0) V_{1}(M_1^{(N)}-E_1)E_2\pi_\varphi(x)\xi_\varphi+\\
&(M_0^{(N)}-E_0)E_1\pi_\varphi(x)\xi_\varphi+\\
&E_0\pi_\varphi(x)\xi_\varphi\,.
\end{align*}
In particular, the sequence of vectors $\{T_N \pi_\varphi(x)\xi_\varphi-E_\varphi\pi_\varphi(x)\xi_\varphi\}_N$ is the sum of $N_0$
terms, each of which goes to zero thanks to the mean ergodic theorem.
For example, the first of these terms is the the sequence (in N) of vectors
$$\left(\prod_{l=0}^{N_0-1} V_{l}^{lN}(M_l^{(N)}-E_l)\right)E_{N_0}\pi_\varphi(x)\xi_\varphi$$
which go to zero by joint continuity on bounded subsets of $\cb(\ch_\varphi)$  of the product in the strong operator topology, in that
 each factor $(M_l^{(N)}-E_l)$ strongly converges to zero by virtue of the mean ergodic theorem.
\end{proof}

\begin{lem}\label{insertproj}
Let $\varphi$ be a spreadable state on $\ast_\bn\ga$. For any localized $a, b$ in $\ast_\bn\ga$ with disjoint supports one has
$$E_\varphi \pi_\varphi(ab)E_\varphi= E_\varphi \pi_\varphi(a) E_\varphi \pi_\varphi(b) E_\varphi\, .$$
\end{lem}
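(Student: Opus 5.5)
The plan is to prove the equality in matrix-element form. For arbitrary vectors $\eta,\zeta\in E_\varphi\ch_\varphi$ we will show
$$\langle \pi_\varphi(ab)\eta,\zeta\rangle=\langle \pi_\varphi(a)E_\varphi\pi_\varphi(b)\eta,\zeta\rangle ,$$
and we will replace the middle $E_\varphi$ by the operators $T_N$ of Proposition \ref{ergoKostler}, passing to the limit in $N$.

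We will use two preliminary facts about $\bj_\bn$-invariant vectors. First, exactly as in the proof of Theorem \ref{ergohierspread} for $V_\tau$, one shows that for every $h\in\bj_\bn$ and every invariant $\eta$,
$$V_h\pi_\varphi(x)\eta=\pi_\varphi(\a_h(x))\eta,\qquad x\in\ast_\bn\ga .$$
Second, as in the proof of Proposition \ref{ergoKostler}, $V_h^*E_\varphi=E_\varphi$, hence $E_\varphi V_h=E_\varphi$ for all $h\in\bj_\bn$. Combining the two, for any $h\in\bj_\bn$,
$$E_\varphi\pi_\varphi(ab)\eta=E_\varphi\pi_\varphi(\a_h(a)\a_h(b))\eta .$$
So the left-hand side is insensitive to any simultaneous spreading of the two supports. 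Using this, we may first apply a suitable $h$ to assume that the union $F_a\cup F_b$ is as sparse as we like. It then decomposes into maximal consecutive runs (blocks) belonging alternately to $F_a$ and to $F_b$.

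Next we expand the middle projection via Proposition \ref{ergoKostler}:
$$E_\varphi\pi_\varphi(b)\eta=\lim_N T_N\pi_\varphi(b)\eta .$$
Since each $T_N$ is a product of averages of powers of the partial-shift isometries $V_k$, the first fact turns $T_N\pi_\varphi(b)\eta$ into an average of vectors $\pi_\varphi(\a_{h}(b))\eta$, where $h$ ranges over explicit compositions $h=\theta_0^{l_0}\theta_1^{N+l_1}\cdots$ of partial shifts. Thus $\langle\pi_\varphi(a)T_N\pi_\varphi(b)\eta,\zeta\rangle$ is an average of terms $\langle\pi_\varphi(a\,\a_h(b))\eta,\zeta\rangle$. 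The goal is to show that all but a vanishing proportion of these terms equal $\langle\pi_\varphi(\a_g(a)\a_g(b))\eta,\zeta\rangle$ for some $g\in\bj_\bn$, i.e.\ equal $\langle\pi_\varphi(ab)\eta,\zeta\rangle$ by the identity above. Concretely, for a term to have this form, the configuration of $F_a$ together with $h(F_b)$ must be order-isomorphic, block by block, to $F_a\cup F_b$.

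The main obstacle is exactly this combinatorial step. A naive shift $V_k^l$ moves the points of $F_b$ beyond $k$ to the right, while leaving $a$ untouched, so it can make blocks of $b$ cross points of $a$ and destroy the interleaving pattern. K\"ostler's device of the weights $V_k^{kN}$, with the $k$-th factor pushing by roughly $kN$, is designed to separate successive blocks at different scales. The first thing I would try is the following. If necessary, apply the same averaging symmetrically to $a$, writing the pairing as $\lim_N\langle T_N\pi_\varphi(b)\eta, T_N\pi_\varphi(a^*)\zeta\rangle$ up to the $E_\varphi V_h=E_\varphi$ identities. Then check that for the dominant range of indices $(l_0,\dots,l_N)$ the relative order of the blocks of $a$ and $b$ is preserved. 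If this bookkeeping proves too delicate, the fallback is to induct on the number of alternating blocks. The case of a single $a$-block followed by a single $b$-block (or the reverse) is Proposition \ref{ergospread}-type reasoning with only $E_\varphi=\lim_n\frac1n\sum V_\tau^n$. One would then peel off the rightmost block using a partial shift $\theta_k$ placed between blocks and the projection $E_k$ onto the $V_k$-fixed vectors, which is where the nested structure of $T_N$ in Proposition \ref{ergoKostler} enters.
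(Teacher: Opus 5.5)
Your fallback is exactly the paper's route: pair $\langle \pi_\varphi(a)E_\varphi\pi_\varphi(b)\eta,\zeta\rangle=\langle E_\varphi\pi_\varphi(b)\eta,E_\varphi\pi_\varphi(a^*)\zeta\rangle=\lim_N\langle T_N\pi_\varphi(b)\eta,T_N\pi_\varphi(a^*)\zeta\rangle$, then expand both factors as $T_N=N^{-(N+1)}\sum_{\mathbf{l}}V_{\theta_{N,\mathbf{l}}}$ with independent multi-indices on $a$ and on $b$. But the proposal has a genuine gap. The route you actually develop does not work, and the step that makes the symmetric version work is only announced as something to ``check''.

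In the one-sided version ($a$ fixed, only $b$ averaged) the good terms are a vanishing proportion, not a dominant one. For instance, take $0\in F_b$ and $1\in F_a$. Then $\theta_{N,\mathbf{k}}(0)=k_0$ is uniformly distributed in $\{0,\dots,N-1\}$. For every $k_0\geq 1$ the point of $b$ lands on or beyond the point of $a$, so $\langle\pi_\varphi(a\,\a_{\theta_{N,\mathbf{k}}}(b))\eta,\zeta\rangle$ is not related to $\langle\pi_\varphi(ab)\eta,\zeta\rangle$ by any spreading. A preliminary sparsification cannot help, since it is fixed while the range of $k_0$ grows with $N$.

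The missing ingredient is K\"ostler's separation property (his Lemma 8.5, which the paper invokes). You can verify it directly. Composing the partial shifts, with the rightmost factor acting first, gives for $i<N$
$$\theta_{N,\mathbf{l}}(i)=i+\frac{Ni(i+1)}{2}+\sum_{k=0}^{i}l_k\in\Big[\,i+\frac{Ni(i+1)}{2},\ \frac{N(i+1)(i+2)}{2}-1\,\Big].$$
These windows are disjoint and increasing in $i$. Hence $\theta_{N,\mathbf{l}}(i)<\theta_{N,\mathbf{k}}(j)$ for all $\mathbf{l},\mathbf{k}$ whenever $i<j<N$.

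So suppose $F_a\cup F_b\subset\{0,\dots,N-1\}$. For every pair $(\mathbf{l},\mathbf{k})$, disjointness of the supports lets you choose a single $g\in\bj_\bn$ that agrees with $\theta_{N,\mathbf{l}}$ on $F_a$ and with $\theta_{N,\mathbf{k}}$ on $F_b$. Then $\a_{\theta_{N,\mathbf{l}}}(a)\,\a_{\theta_{N,\mathbf{k}}}(b)=\a_g(ab)$. Each term therefore equals $\langle\pi_\varphi(ab)\eta,\zeta\rangle$, by $\bj_\bn$-invariance of $x\mapsto\langle\pi_\varphi(x)\eta,\zeta\rangle$ for $\eta,\zeta\in E_\varphi\ch_\varphi$.

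Every term is good, not merely a dominant range. So you need neither a counting argument nor the induction on blocks.
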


\begin{proof}

For any $N$-tuple $ \bold{l}_N=(l_1, l_2, \ldots, l_N)$ in $\{0, 1, \ldots N-1\}^N$ define
$\theta_{N, \bold{l}_N}:= \prod_{i=0}^N \theta_i^{iN+l_i}=\theta_0^{l_0}\theta_1^{N+l_1}\cdots \theta_N^{N^2+l_N}$.
As proved in \cite[Lemma 8.5]{K}, the maps $\theta_{N, \bold{l}_N}$ enjoy the following property: for any fixed $N$, 
$\bold{l}_N$, $\bold{k}_N$ one has $\theta_{N, \bold{l}_N}(i)<\theta_{N, \bold{k}_N}(j)$ if $i<j<N$.\\
 By its very definition, $T_N$ can also be written as $T_N=\frac{1}{N^{N+1}}\sum_{\bold{l}_N}V_{\theta_{N, \bold{l}_N}}$, where
$V_{\theta_{N, \bold{l}_N}}$ is the isometry on $\ch_\varphi$ defined by $V_{\theta_{N, \bold{l}_N}} \pi_\varphi(x)\xi_\varphi:=
\pi_\varphi(\theta_{N, \bold{l}_N}(x))\xi_\varphi$, for all $x$ in $\ast_\bn\ga$.\\
That said, we are ready to prove the statement. For all $\xi, \eta$ in  $\ch_\varphi$ we have
\begin{align*}
&\langle E_\varphi \pi_\varphi(a)E_\varphi \pi_\varphi(b) E_\varphi E_\varphi\xi , E_\varphi \eta \rangle\\
&=\lim_N
\langle  \frac{1}{N^{N+1}}\sum_{\bold{l}_N}V_{\theta_{N, \bold{l}_N}}\pi_\varphi(a)\frac{1}{N^{N+1}}\sum_{\bold{k}_N}V_{\theta_{N, \bold{k}_N}} \pi_\varphi(b) E_\varphi E_\varphi\xi , E_\varphi \eta \rangle\\
\end{align*}
by joint continuity of the product w.r.t. the strong operator topology on bounded sets of $\cb(\ch_\varphi)$.\\
The right-hand side of the equality above can in turn be expressed as follows:
\begin{align*}
&\lim_N \frac{1}{N^{2(N+1)}}\sum_{\bold{l}_N,\bold{k}_N}\langle \theta_{N, \bold{l}_N}\pi_\varphi(a) V_{\theta_{N, \bold{k}_N}} \pi_\varphi(b) E_\varphi E_\varphi\xi, E_\varphi\eta\rangle= \\
&\lim_N \frac{1}{N^{2(N+1)}}\sum_{\bold{l}_N,\bold{k}_N}\langle \pi_\varphi(a) \pi_\varphi(b) E_\varphi E_\varphi\xi, E_\varphi\eta\rangle=
\langle \pi_\varphi(a) \pi_\varphi(b) E_\varphi E_\varphi\xi, E_\varphi\eta\rangle=\\
&\langle E_\varphi \pi_\varphi(a)\pi_\varphi(b) E_\varphi E_\varphi\xi , E_\varphi \eta \rangle\,,
\end{align*}
where the first equality is got to by $\bj_\bn$-invariance of the bounded linear functional $\langle (\cdot) E_\varphi\xi,  E_\varphi\eta\rangle$
and the property enoyed  by the maps $\theta_{N, \bold{l}_N}$ recalled above.
\end{proof}

\section{}

 \begin{prop}\label{csupport}
Let $\mathcal{B}\subset\mathcal{A}$ be a unital inclusion of $C^*$-algebras. If $\varphi$  is a state on
  $\mathcal{A}$ with central support, then $\varphi\upharpoonright_{\mathcal{B}}$ has central support as well as a state on
  $\mathcal{B}$
   \end{prop}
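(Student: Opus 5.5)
The plan is to use the characterization recalled in Remark \ref{censupport}: $\varphi$ has central support exactly when the GNS vector $\xi_\varphi$ is separating for $\pi_\varphi(\mathcal{A})''$. So we must show that the GNS vector of $\psi:=\varphi\upharpoonright_{\mathcal{B}}$ is separating for $\pi_\psi(\mathcal{B})''$.

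First, realize the GNS representation of $\psi$ inside that of $\varphi$. Set $\mathcal{K}:=\overline{\pi_\varphi(\mathcal{B})\xi_\varphi}\subseteq\ch_\varphi$. This subspace is invariant under $\pi_\varphi(\mathcal{B})$, and the triple $(\mathcal{K},\pi_\varphi(\cdot)\upharpoonright_{\mathcal{K}},\xi_\varphi)$ is a cyclic representation of $\mathcal{B}$ with vector state $\psi$. The inclusion is unital, so $\xi_\varphi\in\mathcal{K}$. By uniqueness of the GNS construction, this triple is unitarily equivalent to $(\ch_\psi,\pi_\psi,\xi_\psi)$. Let $P$ be the orthogonal projection onto $\mathcal{K}$; then $P\in\pi_\varphi(\mathcal{B})'$. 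The restriction map $T\mapsto T\upharpoonright_{\mathcal{K}}=TP$ sends $\mathcal{M}:=\pi_\varphi(\mathcal{B})''$ onto the reduced algebra $\mathcal{M}P$, and this image is exactly $(\pi_\varphi(\mathcal{B})\upharpoonright_{\mathcal{K}})''$. That is the standard description of the von Neumann algebra generated by a subrepresentation, since $\mathcal{M}P=\mathcal{M}_P$ for $P\in\mathcal{M}'$. Hence $\pi_\psi(\mathcal{B})''\cong\mathcal{M}P$ acting on $\mathcal{K}$, with $\xi_\psi$ corresponding to $\xi_\varphi$.

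Second, check the separating property. Let $S\in\pi_\psi(\mathcal{B})''$ satisfy $S\xi_\psi=0$, and write $S=TP$ with $T\in\mathcal{M}$. Then $T\xi_\varphi=TP\xi_\varphi=0$. Since $\mathcal{M}\subseteq\pi_\varphi(\mathcal{A})''$ and $\xi_\varphi$ is separating for the latter by hypothesis, $T=0$, and therefore $S=0$. So $\xi_\psi$ is separating for $\pi_\psi(\mathcal{B})''$, which by Remark \ref{censupport} means $\psi$ has central support.

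The only step needing care is the identification $(\pi_\varphi(\mathcal{B})\upharpoonright_{\mathcal{K}})''=\pi_\varphi(\mathcal{B})''P$. It follows from the commutant theorem for reduced algebras, $(\mathcal{M}')_P{}'=\mathcal{M}_P$, which applies because $P\in\mathcal{M}'$. Alternatively, one can avoid it and argue by weak-operator density. The restriction map is WOT-continuous, so the WOT-closure of $\pi_\varphi(\mathcal{B})P$ is contained in $\mathcal{M}P$. To get every element of $\pi_\psi(\mathcal{B})''$ as some $TP$, use Kaplansky density to approximate it by a bounded net $\pi_\varphi(b_\alpha)P$. Lift to the bounded net $\pi_\varphi(b_\alpha)$ in $\mathcal{M}$, take a WOT accumulation point $T$, and use WOT continuity of restriction to conclude that $TP$ is the given element. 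This mirrors the surjectivity argument in the proof of Theorem \ref{HSonZ}. The remaining steps are routine.
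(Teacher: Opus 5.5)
Your proof is correct and is essentially the paper's argument. You realize the GNS representation of $\varphi\upharpoonright_{\mathcal{B}}$ on $\overline{\pi_\varphi(\mathcal{B})\xi_\varphi}$, lift an element of $\pi_{\varphi\upharpoonright_{\mathcal{B}}}(\mathcal{B})''$ that kills the GNS vector to an element of $\pi_\varphi(\mathcal{B})''\subseteq\pi_\varphi(\mathcal{A})''$ that kills $\xi_\varphi$, and conclude from the separating hypothesis; your justification that the restriction map is surjective, via $(\mathcal{M}')_P{}'=\mathcal{M}_P$ for $P\in\mathcal{M}'$, is just more explicit than the paper's one-line appeal to the bicommutant theorem.
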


\begin{proof}
Denote by $\mathcal{K}\subset\ch_\varphi$ the cyclic subspace generated by $\mathcal{B}$ in the GNS representation (of $\mathcal{A}$) of $\varphi$, that is $\ck:=\overline{\pi_\varphi(\mathcal{B})\xi_\varphi}$. By uniqueness of  the GNS representation, $\ck$ is (up to unitary equivalence) the GNS representation of $\varphi\upharpoonright_\mathcal{B}$. By von Neumann bicommutant theorem, the restriction map
$\pi_\varphi(\mathcal{B})\ni\pi_\varphi(b)\,\mapsto\pi_\varphi(b)\upharpoonright_\ck\, \in \pi_{\varphi\upharpoonright _\mathcal{B}}(\mathcal{B})$ can be extended to a surjective $*$-homomorphism $\Phi:\pi_\varphi(\mathcal{B})''\rightarrow\pi_{\varphi\upharpoonright _\mathcal{B}}(\mathcal{B})'' $.\\
Let now $T$ be in $\pi_{\varphi\upharpoonright _\mathcal{B}}(\mathcal{B})''$ such that $T \xi_\varphi=0$. Since 
$\Phi$ is surjective, there exists $S$ in $\pi_\varphi(\mathcal{B})''$ such that $\Phi(S)=T$. Because
$\Phi$ restricts operators to $K$, we also have $S\xi_\varphi=0$, hence $S=0$ as by hypothesis
$\xi_\varphi$ is separating for the whole $\pi_\varphi(\mathcal{A})''$, which finally yields $T=0$.
\end{proof}

\begin{lem}\label{separating}
For any $G$-invariant, shift-invariant state $\varphi$ on $\otimes^\bn_v \ga$, the GNS vector
$\xi_\varphi$ is separating for $\mathcal{T}_\varphi$.
\end{lem}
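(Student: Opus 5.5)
The plan is to adapt the first step of the proof of Theorem \ref{01lawspread}: take $T\in\mathcal{T}_\varphi$ with $T\xi_\varphi=0$, show that $T\pi_\varphi(x)\xi_\varphi=0$ for a total set of vectors $\pi_\varphi(x)\xi_\varphi$, and conclude $T=0$. Without any decoupling property, the factorization used there is unavailable. I will replace it by the twisted commutation relations of $\otimes^\bn_v\ga$ together with the $G$-invariance of $\varphi$. Shift-invariance does not appear to be needed; it only guarantees that the tail algebra is the one considered throughout.

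First, I implement the gauge action. Since $\varphi\circ\a_g=\varphi$ for all $g\in G$, the formula $U_g\pi_\varphi(y)\xi_\varphi:=\pi_\varphi(\a_g(y))\xi_\varphi$ defines unitaries on $\ch_\varphi$ with $U_g\xi_\varphi=\xi_\varphi$ and $U_g\pi_\varphi(y)U_g^*=\pi_\varphi(\a_g(y))$. Hence ${\rm ad}\,U_g$ is a normal automorphism of $\mathcal{R}_\varphi$. It maps each $\{\pi_\varphi(i_k(\ga)):k\geq n\}''$ onto itself, since $\a_g$ preserves each $i_k(\ga)$, and therefore it leaves $\mathcal{T}_\varphi$ globally invariant.

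Next, I convert the commutation relations into a gauge twist. Fix a homogeneous localized $x$ with support in $\{0,\dots,l\}$, so that $x$ has some degree $\chi\in\widehat{G}$. The map $\eta\mapsto v(\eta,\chi)^{\pm1}$, with the sign dictated by the defining relations of $I_v$, is a character of $\widehat{G}$. By Pontryagin duality it is evaluation at some $g_\chi\in G$. The defining relations of $\otimes^\bn_v\ga$ then give $y\,x=x\,\a_{g_\chi}(y)$ for every homogeneous $y$ in the algebra generated by $\{i_k(\ga):k>l\}$, and by linearity and norm density for all such $y$. Passing to the GNS representation and then to weak closures using the normality of ${\rm ad}\,U_{g_\chi}$, I obtain
$$S\,\pi_\varphi(x)=\pi_\varphi(x)\,U_{g_\chi}SU_{g_\chi}^*\quad\text{for all } S\in\{\pi_\varphi(i_k(\ga)):k>l\}''\supseteq\mathcal{T}_\varphi .$$

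Then I compute. Apply the display to $S=T^*T\in\mathcal{T}_\varphi$. This gives
$$\|T\pi_\varphi(x)\xi_\varphi\|^2=\langle \pi_\varphi(x^*)T^*T\pi_\varphi(x)\xi_\varphi,\xi_\varphi\rangle=\langle \pi_\varphi(x^*x)\,U_{g_\chi}T^*TU_{g_\chi}^*\xi_\varphi,\xi_\varphi\rangle .$$
Since $U_{g_\chi}^*\xi_\varphi=\xi_\varphi$ and $T\xi_\varphi=0$, the right-hand side vanishes. Every element of the algebraic layer is a finite sum of homogeneous elements, so $T\pi_\varphi(x)\xi_\varphi=0$ for all localized $x$ in the $*$-algebra generated by the $i_k(\ga_0)$. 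This algebra is norm dense in $\otimes^\bn_v\ga$, so by cyclicity of $\xi_\varphi$ we get $T=0$.

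The main obstacle is the second step. The sign and order conventions in $v$ must be tracked so that the twisting map is exactly the gauge automorphism $\a_{g_\chi}$, identified through duality. One must also justify extending the intertwining relation from the $C^*$-level to the von Neumann algebra, which is where normality of the implemented gauge action is essential.
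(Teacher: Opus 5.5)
Your argument is correct, but it takes a different and somewhat more direct route than the paper. The paper uses the commutation relations only for gauge-fixed tail elements. It shows that $\mathcal{T}_\varphi^G=\{T\in\mathcal{T}_\varphi: V_gTV_g^*=T,\ g\in G\}$ commutes with all homogeneous localized $\pi_\varphi(x)$. Hence $\mathcal{T}_\varphi^G$ is central in $\pi_\varphi(\otimes^\bn_v\ga)''$, and so it is separated by the cyclic vector $\xi_\varphi$. A general $T\in\mathcal{T}_\varphi$ with $T\xi_\varphi=0$ is then handled by passing to $E_G(T^*T)\in\mathcal{T}_\varphi^G$, where $E_G$ is the averaging conditional expectation onto the gauge-fixed algebra, and by invoking faithfulness of $E_G$.

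You instead record the twist for every tail element, $S\pi_\varphi(x)=\pi_\varphi(x)U_{g_\chi}SU_{g_\chi}^*$. This reduces to the paper's centrality claim when $S$ is gauge-fixed. Because the implementing unitaries fix $\xi_\varphi$, the norm computation from the first step of Theorem \ref{01lawspread} then goes through unchanged. Your route dispenses with $E_G$, its faithfulness, and even the gauge-invariance of $\mathcal{T}_\varphi$, which you state but never use. The only extra ingredient is realizing the twist as a gauge automorphism. That is immediate, since every character of the discrete group $\widehat{G}$ is evaluation at a point of $G$. The paper's route, in turn, yields the additional structural fact that the gauge-fixed part of the tail is central.

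Two minor points. First, with the paper's convention $i_k(a)i_l(b)=v(\deg a,\deg b)\,i_l(b)i_k(a)$ for $k<l$, the left element $x$ occupies the first slot. So the relevant character is $\eta\mapsto\overline{v(\chi,\eta)}$, not $v(\eta,\chi)^{\pm1}$; these differ for a general bicharacter. Second, the step you single out as the main obstacle is harmless. The map $S\mapsto U_gSU_g^*$ is weak-operator continuous on all of $\cb(\ch_\varphi)$, and $\{\pi_\varphi(i_k(\ga)):k>l\}''$ is the weak closure of the unital $*$-algebra generated by the $\pi_\varphi(i_k(\ga_0))$, $k>l$.
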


\begin{proof}

Since the state $\varphi$ is $G$-invariant, the action of $G$ is unitarily implemented on
$\ch_\varphi$ by a family of unitaries $\{V_g: g\in G\}$ uniquely determined by
$V_g\pi_\varphi(x)\xi_\varphi= \pi_\varphi(\gamma_g(x))\xi_\varphi$, for all $g$ in $G$, $x$ in $\otimes^\bn_v \ga$.\\\
We claim that $\mathcal{T}_\varphi^ G:=\{T\in \mathcal{T}_\varphi: V_gTV_g^*=T, g\in G\}$ is contained
in the center of the von Neumann algebra $\pi_\varphi(\otimes^\bn_v \ga)''$. From this
 it immediately follows that
$\xi_\varphi$ is separating for $\mathcal{T}_\varphi^G $ (as $\xi_\varphi$ is separating for the whole commutant $\pi_\varphi(\otimes^\bn_v \ga)'$).\\
Let us now denote by $E_G$ the canonical conditional expectation of $\pi_\varphi(\otimes^\bn_v \ga)''$ onto its $G$-fixed-point algebra, {\it i.e.} $E_G$ is obtained by averaging the adjoint action of the $V_g$'s.\\
If $T$ in  $\mathcal{T}_\varphi$ is such that $T\xi_\varphi=0$, then
$V_gT^*TV_g^*\xi_\varphi=0$ for all $g$ in $G$, hence $E_G(T^*T)\xi_\varphi=0$. 
Since $\xi_\varphi$ is separating for  $\mathcal{T}_\varphi^G$, we find $E_G(T^*T)=0$.
Because $E_G$ is faithful, we finally get to $T^*T=0$, hence $T=0$.\\
In order to end the proof, we need to make sure the claim made above holds true.
This can be seen by noting that any element in $\mathcal{T}_\varphi^G$ commutes with  the image under $\pi_\varphi$ of all
homogeneous localized elements of $\otimes^\bn_v \ga$.
\end{proof}

\section{}
We recall that if $f$ is a positive linear functional on a non-unital $C^*$-algebra $\ga$, then $\|f\|=\sup_\lambda f(e_\lambda)$, where
$\{e_\lambda: \lambda\in \Lambda\}$ is a contractive, positive approximate unit, see {\it e.g.} \cite[Proposition 2.1.5.]{D}
For any positive linear functional $f$ on $\ga$ with norm less than or equal to $1$, we denote by $\widetilde{f}$ its extension as a state of
$\widetilde{\ga}=\ga+\bc I$, the one-point unitalization of $\ga$. The GNS representation $\pi_f$ of $f$ is by definition the restriction to $\ga$ of the GNS representation
of $\widetilde{f}$ as a representation of $\widetilde{\ga}$ (note that in this way $\pi_f$ acts on $\ch_{\widetilde{f}}$). To ease the notation, the GNS vector $\xi_{\widetilde{f}}$
will be simply denoted by $\xi_f$.

\begin{lem}\label{approxunit}
Let $\varphi$ be a normalized positive linear functional on $\ast_\bn^0\ga$ such that  $\|\varphi\restriction_\cb\|=1$, where $\cb\subset\ast_\bn^0\ga$ is a $C^*$-subalgebra.
For any contractive, positive approximate unit $\{e_\lambda: \lambda\in\Lambda\}$ of $\cb$, one has
$$\lim_\lambda\pi_\varphi(e_\lambda)\xi_\varphi=\xi_\varphi\,, $$
where the convergence is understood in norm.
\end{lem}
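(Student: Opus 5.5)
The plan is to compute the distance $\|\pi_\varphi(e_\lambda)\xi_\varphi-\xi_\varphi\|^2$ directly in terms of $\widetilde{\varphi}$ and then use the norm formula for positive functionals recalled just before the statement. Since $\xi_\varphi$ is the GNS vector of the state $\widetilde{\varphi}$ on the unitalization $\widetilde{\ast_\bn^0\ga}$, one has
\begin{align*}
\|\pi_\varphi(e_\lambda)\xi_\varphi-\xi_\varphi\|^2
=\widetilde{\varphi}\big((e_\lambda-I)^*(e_\lambda-I)\big)
=\varphi(e_\lambda^2)-2\varphi(e_\lambda)+1 ,
\end{align*}
using that $e_\lambda$ is self-adjoint and $\widetilde{\varphi}(I)=1$.

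Next I bound $\varphi(e_\lambda^2)$ from above. Because $0\leq e_\lambda\leq I$ in $\widetilde{\cb}$, functional calculus gives $e_\lambda^2\leq e_\lambda$, so positivity of $\varphi$ yields $\varphi(e_\lambda^2)\leq\varphi(e_\lambda)$. Therefore
\begin{align*}
0\leq\|\pi_\varphi(e_\lambda)\xi_\varphi-\xi_\varphi\|^2\leq 1-\varphi(e_\lambda).
\end{align*}

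It then remains to show that $\varphi(e_\lambda)\to 1$. Apply \cite[Proposition 2.1.5]{D} to the positive functional $\varphi\restriction_\cb$ on the $C^*$-algebra $\cb$: for a contractive positive approximate unit, $\|\varphi\restriction_\cb\|=\lim_\lambda\varphi(e_\lambda)$. The recalled statement gives this with $\sup$ instead of $\lim$, but the net $\varphi(e_\lambda)$ converges to the norm. Indeed, for $b$ in the unit ball of $\cb$, Cauchy--Schwarz gives $|\varphi(e_\lambda b)-\varphi(b)|^2\leq\varphi(b^*b)\,\varphi\big((I-e_\lambda)^2\big)$, and one can also argue via the GNS representation of $\varphi\restriction_\cb$ that $\pi(e_\lambda)\to$ the projection onto the essential subspace strongly. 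Using the hypothesis $\|\varphi\restriction_\cb\|=1$, we obtain $\varphi(e_\lambda)\to 1$, and hence $\pi_\varphi(e_\lambda)\xi_\varphi\to\xi_\varphi$ in norm.

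The only delicate point is whether the cited result yields a limit or merely a supremum. If only the supremum is available, I would fall back on the standard fact that approximate units can be chosen increasing, or that $\varphi(e_\lambda)$ is eventually close to the supremum. One argument for this: in the GNS space of $\varphi\restriction_\cb$, $\pi(e_\lambda)\eta\to\eta$ on the dense subspace $\pi(\cb)\eta_0$. Then $\varphi(e_\lambda)$ converges to $\|P\eta_0\|^2$, where $P$ is the projection onto the essential subspace, and this limit equals $\|\varphi\restriction_\cb\|$.
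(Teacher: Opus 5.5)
Your proof is correct and follows the paper's argument exactly. You expand $\|\pi_\varphi(e_\lambda)\xi_\varphi-\xi_\varphi\|^2$ via $\widetilde{\varphi}$, use $e_\lambda^2\leq e_\lambda$ to bound it by $1-\varphi(e_\lambda)$, and conclude from $\varphi(e_\lambda)\to\|\varphi\restriction_\cb\|=1$. Your extra care about $\sup$ versus $\lim$ is settled by your GNS argument; the displayed Cauchy--Schwarz estimate alone is circular, since its right-hand side is exactly the quantity being controlled. The useful form is $|\varphi(be_\lambda)|^2\leq\varphi(bb^*)\varphi(e_\lambda)$, which gives $\liminf_\lambda\varphi(e_\lambda)\geq\|\varphi\restriction_\cb\|$ directly.
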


\begin{proof}
It is a matter of computation. Indeed, one has
\begin{align*}
&0\leq \|\pi_\varphi(e_\lambda)\xi_\varphi-\xi_\varphi\|^2= \widetilde{\varphi}((e_\l-1)^*(e_\l -1))= \widetilde{\varphi}(e_\l^2)+1-2\widetilde{\varphi}(e_\l)\leq\\
&\widetilde{\varphi}(e_\l)+1-2\widetilde{\varphi}(e_\l)\rightarrow 0
\end{align*}
as $\varphi(e_\l)$ goes to $1$ because the norm of the restriction of $\varphi$ to $\cb$ is $1$ by hypothesis (in the inequality we have taken into account $e_\l^2\leq e_\l$, which holds as 
$0\leq e_\l\leq 1$).
\end{proof}

\section*{Acknowledgments}
\noindent
Both authors are members of INDAM-GNAMPA and acknowledge the support of INdAM-GNAMPA Project 2026 ``Simmetrie distribuzionali per processi stocastici quantistici", Project CUP\_UP E53C25002010001.


\end{document}